\newtheorem{thm}{Theorem}
\newtheorem*{thm*}{Theorem}
\newtheorem{lem}[thm]{Lemma}
\newtheorem{prop}[thm]{Proposition}
\theoremstyle{definition}
\newtheorem{defn}[thm]{Definition}
\theoremstyle{remark}
\newtheorem{rem}[thm]{Remark}
\newcommand{\RCD}{{\mathrm {RCD}}}
\newcommand{\TestV}{{\mathrm {TestV}}}
\newcommand{\LIP}{{\mathrm {LIP}}}
\newcommand{\BV}{{\mathrm {BV}}}
\newcommand{\diff}{{\mathrm{d}}}
\newcommand{\DIFF}{{\mathrm{D}}}
\newcommand{\lip}{{\mathrm {lip}}}
\DeclareMathOperator*{\aplimsup}{ap\,\varlimsup}
\DeclareMathOperator*{\apliminf}{ap\,\varliminf}
\newcommand{\XX}{{\mathsf{X}}}
\newcommand{\YY}{{\mathsf{Y}}}
\newcommand{\ZZ}{{\mathsf{Z}}}
\newcommand{\dist}{{\mathsf{d}}}
\newcommand{\mass}{{\mathsf{m}}}
\newcommand{\capa}{{\mathrm {Cap}}}
\newcommand{\HH}{\mathcal{H}}
\newcommand{\RR}{\mathbb{R}}
\newcommand{\NN}{\mathbb{N}}
\newcommand{\FF}{\mathcal{F}}
\newcommand{\GG}{\mathcal{G}}
\newcommand{\nchi}{{\raise.3ex\hbox{\(\chi\)}}}
\newcommand{\defeq}{\mathrel{\mathop:}=}
\newcommand{\mres}{\mathbin{\vrule height 1.6ex depth 0pt width 0.13ex\vrule height 0.13ex depth 0pt width 1.3ex}}
\newcommand{\fr}{\penalty-20\null\hfill$\blacksquare$}     
\def\Xint#1{\mathchoice
	{\XXint\displaystyle\textstyle{#1}}%
	{\XXint\textstyle\scriptstyle{#1}}%
	{\XXint\scriptstyle\scriptscriptstyle{#1}}%
	{\XXint\scriptscriptstyle\scriptscriptstyle{#1}}%
	\!\int}
\def\XXint#1#2#3{{\setbox0=\hbox{$#1{#2#3}{\int}$}
		\vcenter{\hbox{$#2#3$}}\kern-.5\wd0}}
\def\dashint{\Xint-}
\let\fint\dashint
\let\epsilon\varepsilon
\title{Subgraphs of $\rm BV$ functions on \(\rm RCD\) spaces}
\author[Gioacchino Antonelli]{Gioacchino Antonelli}
\address[Gioacchino Antonelli]{Courant Institute Of Mathematical Sciences (NYU), 251 Mercer Street, 10012, New York, USA}
\email{ga2434@nyu.edu}
\author[Camillo Brena]{Camillo Brena}
\address[Camillo Brena]{Scuola Normale Superiore, Piazza dei Cavalieri, 7, 56126 Pisa, Italy.}
\email{camillo.brena@sns.it}
\author[Enrico Pasqualetto]{Enrico Pasqualetto}
\address[Enrico Pasqualetto]{Scuola Normale Superiore, Piazza dei Cavalieri, 7, 56126 Pisa, Italy.}
\email{enrico.pasqualetto@sns.it}
\begin{document}

\date{\today}
\keywords{Function of bounded variation, $\rm RCD$ space, Cartesian surface, Subgraph, Splitting map}
\subjclass[2020]{53C23, 26A45, 49Q15, 28A75}

\begin{abstract}
In this work we extend classical results for subgraphs of functions of bounded variation in $\mathbb R^n\times\mathbb R$ to the setting of $\XX\times\mathbb R$, where $\XX$ is an $\RCD(K,N)$ metric measure space. 

In particular, we give the precise expression of the push-forward onto $\XX$ of the perimeter measure of the subgraph in $\XX\times\mathbb R$ of a $\BV$ function on $\XX$. Moreover, in properly chosen good coordinates, we write the precise expression of the normal to the boundary of the subgraph of a $\BV$ function $f$ with respect to the polar vector of $f$, and we prove change-of-variable formulas.
\end{abstract}

\maketitle
\tableofcontents

\section{Introduction}

This short note is about the study of functions of bounded variation in the setting of $\RCD$ spaces. The study of analytic and geometric properties of $\RCD$ metric measure spaces $(\XX,\dist,\mass)$ flourished in the last decade, see the account in \cite{AmbICM} and the references \cite{Sturm06I, Sturm06II, Lott-Villani09, Ambrosio_2014, Gigli14, AmbrosioGigliMondinoRajala12,Erbar-Kuwada-Sturm13, AmbrosioMondinoSavare13, CavMil16}.
The geometric structure of these spaces up to $\mass$-negligible sets is pretty well-understood after the works \cite{Mondino-Naber14,GP16-2,DPMR16,KelMon16,bru2018constancy}. Recently, the research on these spaces has been focusing also on the study of structure results for sets of (locally) finite perimeter, and of fine properties of functions of (locally) bounded variation, see \cite{ambrosio2018rigidity, bru2019rectifiability, bru2021constancy, BGBV}.\\
We stress that this theory has recently found interesting applications in the study of the isoperimetric problem on non-compact smooth Riemannian manifolds with Ricci curvature bounded from below, see \cite{ConcavitySharpAPPS}, and in the proof of the Rank-One Theorem in this low regularity setting \cite{ABPrank}. We refer the reader to \Cref{sec:RCDspaces} for more details.
\smallskip

We fix from now on an $\RCD(K,N)$ metric measure space $(\XX,\dist,\mass)$.
Here $K\in \mathbb R$ plays the role of (synthetic) lower bound on the Ricci curvature, $N\in [1,\infty)$ plays the role of synthetic upper bound on the dimension. 
Given a function of locally bounded variation $f\in\BV_{\mathrm{loc}}(\XX)$ (see \Cref{sub:BVCalculus}) we consider in $\XX\times\mathbb R$ the subgraph 
$$
\GG_f\defeq\{(x,t)\in\XX\times\RR\,:\,t<f(x)\}.
$$
Notice that, under the sole assumption of $f$ being measurable, $f\in\BV_{\mathrm{loc}}(\XX)$ if and only if $\mathcal{G}_f$ is of locally finite perimeter, see the first part of the main \Cref{thm:Principale1} below. In this note we are concerned with studying the relation of the measure $|\DIFF \nchi_{\mathcal{G}_f}|$ on $\XX\times\mathbb R$ with the measure $|\DIFF f|$ on $\XX$. We will denote with $\pi^1:\XX\times\mathbb R\to\XX$ the projection map onto $\XX$, and with $\pi^2:\XX\times\mathbb R\to\mathbb R$ the projection map onto $\mathbb R$.
\smallskip

In the Euclidean setting, this study can be dated back at least to \cite{Miranda1964}. There, the author was concerned with the study of {\em Cartesian surfaces}, i.e., subsets of $\mathbb R^n\times\mathbb R$ that can be written as $\{(x,t)\in\mathbb R^n\times\mathbb R:x\in\Omega,\,t=f(x)\}$, where $\Omega\subseteq\mathbb R^n$ is open, and $f\in\mathrm{BV}_{\mathrm{loc}}(\Omega)$. A systematic study of Cartesian surfaces and subgraphs of functions of locally bounded variation in Euclidean spaces can be found in \cite[Section 4.1.5]{GMSCartCurr}. In fact, our results are the generalization of the results contained in \cite[Section 4.1.5]{GMSCartCurr} to the setting of finite-dimensional $\RCD$ spaces.
\smallskip

The results of \cite[Section 4.1.5]{GMSCartCurr} have been used in the short proof of the Rank-One Theorem in the Euclidean setting of \cite{MV19}. Moreover, outside the Euclidean setting, they have also been recently generalized in the setting of arbitrary Carnot groups in \cite[Theorem 1.3, Theorem 4.2, and Theorem 4.3]{DMV19}. The latter generalization has been exploited to prove the Rank-One Theorem for a subclass of Carnot groups, see \cite[Theorem 1.1 and Theorem 1.2]{DMV19}.
\smallskip

We aim now at stating the main results of this note. We recall some terminology and notation. We refer the reader to \Cref{PreciseRep} and \Cref{def:Decomposition} for more details. Given $f\in\mathrm{BV}_{\mathrm{loc}}(\XX)$ we can define in a natural way (see \Cref{PreciseRep}) the {\em approximate lower} and {\em upper limits} $f^\wedge(x)$, $f^\vee(x)$ of $f(x)$ at $x\in\XX$, and the precise representative $\bar f(x):=(f^\wedge(x)+f^\vee(x))/2$. The set of points $x\in\XX$ where $f^\wedge(x)<f^\vee(x)$ is called the {\em jump set} $J_f$. In the setting of finite-dimensional $\RCD$ spaces it always holds $\mass(J_f)=0$.

    We can write \(|\DIFF f|\) as \(|\DIFF f|^a+|\DIFF f|^s\),
	where \(|\DIFF f|^a\ll\mass\) and \(|\DIFF f|^s\perp\mass\). We also have \(|\DIFF f|^s=|\DIFF f|^j+|\DIFF f|^c\), where the \emph{jump part} is given by \(|\DIFF f|^j\defeq|\DIFF f|\mres J_f\), while the
	\emph{Cantor part} is given by \(|\DIFF f|^c\defeq|\DIFF f|^s\mres (\XX\setminus J_f)\), so that we can write $|\DIFF f|^c=|\DIFF f|\mres C_f$ with $\mass(C_f)=0$. Finally, we call $g_f$ the Borel function such that $|\DIFF f|^a=g_f\mass$.

We stress that in the low regularity setting of finite-dimensional $\RCD$ spaces we cannot give a pointwise meaning to $\DIFF f$ for a function $f$ of locally bounded variation. This also prevents us from proving the verbatim analogues of the results in \cite[Section 4.1.5]{GMSCartCurr} in our setting. Nevertheless, with the Calculus developed in the $\RCD$ setting, one can give a meaning to the polar vector $\nu_f$, that in the classical setting is $\DIFF f/|\DIFF f|$ {(\cite{BGBV}, after \cite{bru2019rectifiability})}. This $\nu_f$ belongs to the capacitary module $L^0_{\capa}(T\XX)$ (see \cite{debin2019quasicontinuous}), and it is defined through a divergence theorem with sufficiently smooth test vector fields, see \Cref{thm:DivergenceFormula}. When $f=\nchi_E$ for a locally finite perimeter set $E$, we denote $\nu_{\nchi_E}=:\nu_E$.
\smallskip

In the non-smooth setting of finite-dimensional $\RCD$ spaces there are no canonical local coordinates. Anyway, in a lot of situations, one can use the so-called {\em splitting maps}, see \Cref{def:Splitting}. Roughly speaking, a splitting map is a vector-valued harmonic map whose Jacobian matrix is close to be the identity in an integral sense, and whose Hessian matrix is close to be null in an integral sense. Splitting maps have been used to detect geometric properties of spaces with Ricci lower bounds since the seminal works \cite{Cheeger-Colding96, Cheeger-Colding97I}. They have also been used in the recent \cite{ChNa15, bru2018constancy, bru2019rectifiability, BrueNaberSemola20, bru2021constancy}. We also used the splitting maps in the proof of the Rank-One Theorem in finite-dimensional $\RCD$ spaces \cite{ABPrank}. The next definition is borrowed and inspired from the studies in \cite{bru2021constancy, ABPrank} and gives a good notion of local chart. Notice that as a consequence of \Cref{prop:Reduction} every function $f$ of locally bounded variation has total variation that is supported on the countable union of domains of good splitting maps.

\begin{defn}[Good splitting map]\label{goodsplitting}
Let $(\XX,\dist,\mass)$ be an $\RCD(K,N)$ space of essential dimension $n$. Take $\eta\in(0,n^{-1})$. Fix $y\in\XX$ and $r_y>0$.
We say that an $n$-tuple of harmonic $C_{K,N}$-Lipschitz maps  
$u=(u^1,\dots,u^n):B_{2 r_y}(y)\rightarrow\RR^n$ is a good $\eta$-splitting map on $D\subseteq\ B_{r_y}(y)$ if for every $x\in D$ and $s\in (0,r_y)$, $u$ is an $\eta$-splitting map on $B_s(x)$.
We simply write good splitting map if the value of $\eta\in(0,n^{-1})$ is not important.
\end{defn}
For the notion of essential dimension, we refer the reader to \Cref{sec:RCDspaces}.
Given a good splitting map, following \cite[Definition 3.5]{ABPrank}, we give the following definition. We are essentially reading the normals $\nu_f$ and $\nu_{\mathcal{G}_f}$ in charts.
\begin{defn}\label{def:Normals}
Let $(\XX,\dist,\mass)$ be an $\RCD(K,N)$ space of essential dimension $n$ and let $u$ be a good splitting map on $D\subseteq B_{r_y}(y)$. Let $f\in\BV_{\rm loc}(\XX)$. Then we define
\begin{enumerate}
	\item the $|\DIFF f|$-measurable map \(\nu_f^u\) defined at $|\DIFF f|$-a.e.\ $x\in B_{2r_y}(y)$ as $$\nu_f^u(x)\defeq ((\nu_f\,\cdot\, \nabla u^1)(x),\dots,(\nu_f\,\cdot\, \nabla u^n)(x)),$$
	\item the $|\DIFF\nchi_{\GG_f}|$-measurable map \(\nu_{\GG_f}^u\) defined at $|\DIFF \nchi_{\mathcal{G}_f}|$-a.e.\ $p:=(x,t)\in B_{2r_y}(y)\times\mathbb R$ as $$\nu_{\GG_f}^u(p)\defeq ((\nu_{\GG_f}\,\cdot\, \nabla u^1)(p),\dots,(\nu_{\GG_f}\,\cdot\, \nabla u^n)(p),(\nu_{\GG_f}\,\cdot\, \nabla \pi^2)(p)).$$
\end{enumerate}
\end{defn}
\medskip

We are now ready to state the main theorems of this note. In the first result we explicitly compute $\pi^1_*|\DIFF\nchi_{\mathcal{G}_f}|$ in $\XX$ in terms of $|\DIFF f|$. \Cref{thm:Principale1} is the generalization in the setting of finite-dimensional $\RCD$ spaces of \cite[Theorem 1 in Section 4.1.5]{GMSCartCurr}.

\begin{thm}\label{thm:Principale1}
Let $K\in\mathbb R$ and $N<\infty$. Let $(\XX,\dist,\mass)$ be an $\RCD(K,N)$ space, and let $f\in L^0(\mass)$. Then the following are equivalent:
\begin{itemize}
    \item $f\in \BV_{\rm loc}(\XX)$,
    \item $\GG_f$ has locally finite perimeter.
\end{itemize}
If this is the case, then
\begin{align*}
    \pi_*^1|\DIFF\nchi_{\GG_f}|=\sqrt{g_f^2+1}\,\mass+|\DIFF f|\mres(C_f\cup J_f).
\end{align*}
\end{thm}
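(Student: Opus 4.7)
The proof naturally splits into establishing the equivalence $f\in\BV_{\mathrm{loc}}(\XX)\Leftrightarrow\GG_f$ has locally finite perimeter, and then deriving the explicit push-forward formula. I would use freely that the product $\XX\times\RR$ is itself an $\RCD(\min\{K,0\},N+1)$ space, so the calculus machinery (divergence theorem, coarea, BV theory) applies there.

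\emph{Equivalence.} For $(\Rightarrow)$, I would approximate $f$ by Lipschitz functions $f_k$ in the $\BV$ sense (standard on $\RCD$ spaces). For each $f_k$, the map $T_k(x,t)\defeq(x,t-f_k(x))$ is a bi-Lipschitz self-homeomorphism of $\XX\times\RR$ sending $\GG_{f_k}$ onto the half-space $\XX\times(-\infty,0)$, whose perimeter measure is $\mass\otimes\delta_0$. Hence $\GG_{f_k}$ has locally finite perimeter with bounds depending on $\mathrm{Lip}(f_k)$, and lower semicontinuity passes the property to $\GG_f$ via $L^1_{\mathrm{loc}}$-convergence of characteristic functions. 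For $(\Leftarrow)$, I would slice $|\DIFF\nchi_{\GG_f}|$ along $\pi^2\colon\XX\times\RR\to\RR$: the horizontal sections of $\GG_f$ coincide with the superlevel sets $\{f>t\}$, which have finite perimeter for a.e.\ $t$, so $f\in\BV_{\mathrm{loc}}(\XX)$ by the coarea formula on $\XX$.

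\emph{Explicit formula.} For Lipschitz $f$ the bi-Lipschitz $T$ above transports $\mass\otimes\delta_0$ into $|\DIFF\nchi_{\GG_f}|$ with area factor $\sqrt{1+|\nabla f|^2}$ (by Pythagoras applied to the splitting of the unit normal into base and vertical components), yielding $\pi^1_*|\DIFF\nchi_{\GG_f}|=\sqrt{g_f^2+1}\,\mass$, where $J_f=C_f=\emptyset$. For general $f\in\BV_{\mathrm{loc}}(\XX)$, reduce via \Cref{prop:Reduction} to the domain $D$ of a good splitting map $u$, where one can read $\nu_{\GG_f}^u\in\RR^{n+1}$ in coordinates per \Cref{def:Normals}. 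The expected picture is a dichotomy: (i) on the horizontal portion of $\partial\GG_f$, namely the precise graph $\{(x,\bar f(x)):x\notin C_f\cup J_f\}$, the vertical component of $\nu_{\GG_f}^u$ equals $1/\sqrt{g_f^2+1}$ and the horizontal components match $-g_f\,\nu_f^u/\sqrt{g_f^2+1}$ (identified by passing to the limit in the Lipschitz case), so the projection contributes the absolutely continuous part $\sqrt{g_f^2+1}\,\mass$; (ii) on the vertical cliffs $\{x\}\times[f^\wedge(x),f^\vee(x)]$ above $J_f$, as well as on the analogous Cantor portion, the vertical component of $\nu_{\GG_f}^u$ vanishes and the horizontal components coincide with $\nu_f^u$, so the projection preserves mass and contributes exactly $|\DIFF f|\mres(C_f\cup J_f)$.

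\emph{Main obstacle.} The critical step is the precise identification of $\nu_{\GG_f}^u$ on the singular part: lower semicontinuity combined with Lipschitz approximation yields one inequality fairly directly, but the matching upper bound requires that on the vertical cliffs above $J_f\cup C_f$ the vertical component of the normal vanishes and no extraneous mass is created or lost in the limit, particularly in the Cantor portion where no atomic structure is available. The good splitting machinery, together with the pointwise identity $|\nu_{\GG_f}^u|_e=1$ at $|\DIFF\nchi_{\GG_f}|$-a.e.\ point (which decouples the horizontal and vertical contributions in the absence of canonical local coordinates), is the essential tool that allows the pointwise identification of $\nu_{\GG_f}^u$ on each piece and thereby pins down the push-forward exactly.
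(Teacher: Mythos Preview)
Your $(\Leftarrow)$ argument has a genuine gap: the coarea formula (\Cref{coarea}) is stated for $f\in L^1_{\rm loc}(\mass)$, but the hypothesis is only $f\in L^0(\mass)$. Knowing that the horizontal slices $\{f>t\}$ have locally finite perimeter for a.e.\ $t$ does not by itself force $t\mapsto P(\{f>t\},B)$ to be integrable over all of $\RR$, nor does it give $f\in L^1_{\rm loc}$. The paper closes this gap by first invoking the relative isoperimetric inequality \eqref{isop} to turn the perimeter bound into tail estimates $\int_{t_0}^\infty\mass(\{f>t\}\cap B_r(x))\,\dd t<\infty$ and $\int_{-\infty}^{-t_0}\mass(\{f<t\}\cap B_r(x))\,\dd t<\infty$, whence $f\in L^1_{\rm loc}$ by Fubini; only then is coarea applied. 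Your $(\Rightarrow)$ sketch is also loose: a generic bi-Lipschitz bound on perimeter scales like $(1+\mathrm{Lip}(f_k))^n$, and for a $\BV$ approximation the constants $\mathrm{Lip}(f_k)$ need not stay bounded. What one actually controls is $\int\lip f_k\,\dd\mass$, so one needs the sharper estimate $|\DIFF\nchi_{\GG_{f_k}}|(B\times\RR)\le\int_B\sqrt{1+(\lip f_k)^2}\,\dd\mass\le\mass(B)+\int_B\lip f_k\,\dd\mass$, which is essentially the content of \cite[Theorem 5.1]{Ambrosio-Pinamonti-Speight15} that the paper cites directly.

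For the push-forward formula on the singular part your route---identify $\nu_{\GG_f}^u$ on the vertical cliffs and then argue that a horizontal normal forces $\pi^1$ to preserve mass---can be made to work, but it is substantially heavier than necessary. The paper simply uses that $\mass(C_f\cup J_f)=0$, so the sandwich $|\DIFF f|\le\pi^1_*|\DIFF\nchi_{\GG_f}|\le|\DIFF f|+\mass$ from \eqref{diffandper} collapses to equality there; no information about the normal is needed. Your pointwise identification of $\nu_{\GG_f}^u$ is precisely \Cref{thmmain1}, which in the paper is proved \emph{after} \Cref{thm:Principale1} (and relies on blow-up lemmas such as \Cref{cdskcasn}); pulling all of that into the present proof is redundant. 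Two minor points: the sign of the vertical component should be $-1/\sqrt{1+g_f^2}$, not $+1/\sqrt{1+g_f^2}$; and $|\nu_{\GG_f}^u|_e=1$ is not literally true since the $\nabla u^i$ are only approximately orthonormal---one needs the correction matrix $A(x)$ from \eqref{Amatrix} to get a genuine unit vector.
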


In the following \Cref{thmmain1} we explicitly compute the normal to the boundary of the subgraph $\nu_{\mathcal{G}_f}^u$ in coordinates, with respect to the polar vector $\nu_f^u$ in coordinates, and the density $g_f$ of $|\DIFF f|^a$ with respect to $\mass$. For the sake of reference, the analogous result in the non-Euclidean setting of Carnot groups is obtained in \cite[Theorem 4.3]{DMV19}.

\begin{thm}\label{thmmain1}
Let $K\in\mathbb R$ and $N<\infty$. Let $(\XX,\dist,\mass)$ be an $\RCD(K,N)$ space, and let $f\in \BV_{\rm loc}(\XX)$. Let $u$ be a good splitting map on $D\subseteq B_{r_y}(y)$, where $y\in\XX$ and $r_y>0$. 

Then, for $|\DIFF\nchi_{\GG_f}|$-a.e.\ $(x,t)\in D\times\RR$, it holds that
$$
\nu_{\GG_f}^{{ u}}(x,t)=
\begin{cases}
	\Big(\sqrt{\frac{1}{1+g_f^2}}g_f\nu_f^{ u},-\sqrt{\frac{1}{1+g_f^2}}\Big)(x)\quad &\text{if }x\in D\setminus(J_f\cup C_f),
	\\
	(\nu_f^{ u},0)(x)\quad &\text{if }x\in D\cap(J_f\cup C_f).
\end{cases}
$$
\end{thm}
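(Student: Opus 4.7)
The plan is to identify $\nu_{\GG_f}^u$ by applying the divergence characterization of the polar vector (\Cref{thm:DivergenceFormula}) to two families of test vector fields on $\XX\times\RR$: horizontal fields of the form $V(x,t)=\varphi(t)v(x)\nabla u^i(x)$ for $i=1,\dots,n$, and vertical fields of the form $V(x,t)=\psi(x)\Phi(t)\nabla\pi^2$, where $\varphi,\Phi\in C^\infty_c(\RR)$ satisfy $\Phi'=\varphi$ and $v,\psi$ are suitable test functions on $\XX$. These two families separately probe the entries $\nu_{\GG_f}\cdot\nabla u^i$ and $\nu_{\GG_f}\cdot\nabla\pi^2$. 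The key auxiliary input is \Cref{thm:Principale1}, which, after disintegration of $|\DIFF\nchi_{\GG_f}|$ over its projection $\pi^1_*|\DIFF\nchi_{\GG_f}|$, yields an explicit structure for the fiber measures above each of the three regions $D\setminus(J_f\cup C_f)$, $C_f$, and $J_f$; combined with the standard $\BV$ chain rule for $\Phi\circ f$ in the $\RCD$ setting, this will allow us to match coefficients.

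For the horizontal tests, the harmonicity of $u^i$ yields $\dive V(x,t)=\varphi(t)\langle\nabla v,\nabla u^i\rangle(x)$, so Fubini gives
$$\int_{\GG_f}\dive V\,d(\mass\otimes\mathcal L^1)=\int_\XX\langle\nabla v,\nabla u^i\rangle\,\Phi(f)\,d\mass.$$
An integration by parts on $\XX$ applied to $\Phi\circ f$, together with the $\BV$ chain rule, rewrites this as the pairing of $v\nabla u^i$ with $\DIFF(\Phi\circ f)$, which splits as $\varphi(\bar f)\DIFF f$ on $\XX\setminus J_f$ plus $\frac{\Phi(f^\vee)-\Phi(f^\wedge)}{f^\vee-f^\wedge}\DIFF f$ on $J_f$. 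Equating with $\int V\cdot\nu_{\GG_f}\,d|\DIFF\nchi_{\GG_f}|=\int\varphi(t)v(x)(\nu_{\GG_f}\cdot\nabla u^i)(x,t)\,d|\DIFF\nchi_{\GG_f}|$ and invoking \Cref{thm:Principale1}, one identifies the fiber of $|\DIFF\nchi_{\GG_f}|$ above $x$ as $\sqrt{1+g_f^2}\,\delta_{\bar f(x)}$ (against $\mass$) on $D\setminus(J_f\cup C_f)$, as $\delta_{\bar f(x)}$ (against $|\DIFF f|$) on $C_f$, and as the uniform probability measure on $[f^\wedge(x),f^\vee(x)]$ (against $|\DIFF f|$) on $J_f$. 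Reading off coefficients against arbitrary $\varphi$ and $v$ then produces the horizontal entries of the stated formula, and shows in particular that $\nu_{\GG_f}\cdot\nabla u^i$ is constant in $t\in[f^\wedge,f^\vee]$ above $J_f$.

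For the vertical test, $\dive V(x,t)=\psi(x)\varphi(t)$, hence $\int_{\GG_f}\dive V=\int_\XX\psi\,\Phi(f)\,d\mass$. Matching with $\int V\cdot\nu_{\GG_f}\,d|\DIFF\nchi_{\GG_f}|$ via the same disintegration: on $D\setminus(J_f\cup C_f)$ the fiber identity gives $\sqrt{1+g_f^2}\,\Phi(\bar f(x))(\nu_{\GG_f}\cdot\nabla\pi^2)(x,\bar f(x))=-\Phi(f(x))$, so $\nu_{\GG_f}\cdot\nabla\pi^2=-1/\sqrt{1+g_f^2}$; whereas on $C_f\cup J_f$ the left-hand side vanishes identically, since $\mass$ is null on these sets, forcing $\nu_{\GG_f}\cdot\nabla\pi^2=0$ for $|\DIFF\nchi_{\GG_f}|$-a.e.\ point above. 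The principal obstacle is making the fiberwise disintegration rigorous in the low-regularity $\RCD$ framework, particularly on $J_f$, where the conditional measure is a non-trivial one-dimensional Lebesgue measure on $[f^\wedge,f^\vee]$ rather than a Dirac, and where $\nu_f$ is a capacitary object requiring the correct quasi-continuous representatives; this should be extracted from the proof of \Cref{thm:Principale1}, supplemented by a density argument guaranteeing that the products $v\nabla u^i$ and $\psi\nabla\pi^2$ separate points $|\DIFF\nchi_{\GG_f}|$-a.e.
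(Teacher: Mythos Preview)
Your approach is genuinely different from the paper's. The paper proceeds entirely by blow-up: it shows via tangent-cone arguments (\Cref{csmkc}, \Cref{cdskcasn}, \Cref{area}, \Cref{ccsnasno}) that at $|\DIFF\nchi_{\GG_f}|$-a.e.\ point the rescaled subgraph converges to an explicit halfspace, and reads off $\nu_{\GG_f}^u$ from the direction of that halfspace. In fact the authors explicitly flag in the introduction that the distributional route you propose ``seems not suitable for our context,'' because the charts $u^i$ are only defined on a ball and the ``components'' $(\nu_f^u)_i$, $(\nu_{\GG_f}^u)_i$ have no clear distributional meaning. Concretely, $v\nabla u^i$ is not a global element of $\TestV(\XX)$, and cutting it off destroys neither harmonicity nor the divergence computation, but does require care to stay inside the class where \Cref{thm:DivergenceFormula} applies; you do not address this.

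The more serious gap is on $J_f$. Your scheme identifies only the \emph{vector measure} $\nu_{\GG_f}^u\,|\DIFF\nchi_{\GG_f}|$; extracting $\nu_{\GG_f}^u$ pointwise requires knowing $|\DIFF\nchi_{\GG_f}|$ itself, i.e.\ the disintegration over $\pi^1$. You assert that the fiber over $x\in J_f$ is the uniform probability on $[f^\wedge(x),f^\vee(x)]$ and that this ``should be extracted from the proof of \Cref{thm:Principale1}''. It cannot: the proof of \Cref{thm:Principale1} only yields the \emph{projection} $\pi^1_*|\DIFF\nchi_{\GG_f}|$, via \eqref{diffandper} and \Cref{area}, and says nothing about the conditional measures. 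Identifying the fiber over $J_f$ as Lebesgue is exactly the content of the Fubini-type identity $\HH^n\mres(K\times\RR)=(\HH^{n-1}\mres K)\otimes\HH^1$ for $K\subseteq\FF E_s$, which the paper proves at the end of \Cref{ccsnasno} using the rectifiability of $\FF E_s$ from \cite{bru2019rectifiability} together with bi-Lipschitz charts---a genuinely geometric-measure-theoretic step, not a byproduct of the pushforward formula. Alternatively one might try to recover $|\DIFF\nchi_{\GG_f}|$ from the vector measure by using $|\nu_{\GG_f}|=1$, but since $\{\nabla u^i\}$ are not orthonormal this only becomes a pointwise constraint after passing to $A(x)u$ and invoking the good-coordinates machinery of \Cref{remarkfrequenteAM}, which again rests on Lebesgue-point/blow-up arguments of the same flavour the paper uses. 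So on $J_f$ your outline does not close without importing precisely the ingredients the paper develops.
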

In the next \Cref{thmmain2} we extend \cite[Theorem 2 and Theorem 3 in Section 4.1.5]{GMSCartCurr} in the setting of finite-dimensional $\RCD$ spaces.
\begin{thm}\label{thmmain2}
Let $K\in\mathbb R$ and $N<\infty$. Let $(\XX,\dist,\mass)$ be an $\RCD(K,N)$ space of essential dimension $n$, and let $f\in \BV_{\rm loc}(\XX)$. Let $u$ be a good splitting map on $D\subseteq B_{r_y}(y)$, where $y\in\XX$ and $r_y>0$.
Let also $\varphi:D\times\RR\rightarrow \RR$ be a bounded Borel function. Then 
\begin{enumerate}[label=\roman*)]
	\item for every $i=1,\dots,n$,
	
\begin{align*}
	&\int_{(D\setminus J_f)\times\RR}\varphi(x,t) \big(\nu_{\GG_f}^{u}(x,t)\big)_{i} \dd{|\DIFF\nchi_{\GG_f}|(x,t)}\\&\qquad=\int_{D\setminus J_f}\varphi(x,\bar{f} (x))\big(\nu_{f}^u(x)\big)_{i}\dd{|\DIFF f|(x)},
\end{align*}

\item it holds

\begin{align*}
	&\int_{(D\setminus J_f)\times\RR}\varphi(x,t) \big(\nu_{\GG_f}^{u}(x,t)\big)_{n+1} \dd{|\DIFF\nchi_{\GG_f}|(x,t)}\\&\qquad=-\int_{D \setminus J_f}\varphi(x,\bar{f} (x))\dd{\mass(x)},
\end{align*}

\item for every $i=1,\dots,n$,
\begin{align*}
	&\int_{(D\cap J_f)\times\RR}\varphi(x,t) \big(\nu_{\GG_f}^u(x,t)\big)_{i} \dd{|\DIFF\nchi_{\GG_f}|(x,t)}\\&\qquad=\int_{D\cap J_f}\int_{f^\wedge(x)}^{f^\vee(x)}\varphi(x,t)\,\dd t\,\big(\nu_{f}^u(x)\big)_{i}\Theta_n(\mass,x)\dd{\HH^{n-1}(x)},
\end{align*}
where we set \(\Theta_n(\mass,x)\coloneqq\lim_{r\to 0}\mass(B_r(x))/r^n\),
\item it holds
$$\int_{(D\cap J_f)\times\RR}\varphi(x,t) \big(\nu_{\GG_f}^{u}(x,t)\big)_{n+1} \dd{|\DIFF\nchi_{\GG_f}|(x,t)}=0.$$
\end{enumerate}
\end{thm}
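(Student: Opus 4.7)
The strategy is to combine \Cref{thm:Principale1} and \Cref{thmmain1} with a disintegration of $|\DIFF\nchi_{\GG_f}|\mres(D\times\RR)$ along the fibers of the projection $\pi^1$. Writing
\[
|\DIFF\nchi_{\GG_f}|\mres(D\times\RR)=\int_D\mu_x\,\dd\bigl(\pi^1_*|\DIFF\nchi_{\GG_f}|\bigr)(x),
\]
for probability measures $\mu_x$ on $\RR$, the crucial intermediate claim is that $\mu_x=\delta_{\bar f(x)}$ for $\pi^1_*|\DIFF\nchi_{\GG_f}|$-a.e.\ $x\in D\setminus J_f$, and $\mu_x=\tfrac{1}{f^\vee(x)-f^\wedge(x)}\mathcal{L}^1\mres[f^\wedge(x),f^\vee(x)]$ for $\HH^{n-1}$-a.e.\ $x\in D\cap J_f$. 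Once this fiber description is established, all four identities reduce to direct substitution.

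For the non-jump fiber identification, the argument is a density one: since $\mass(J_f)=0$ and $f$ has an approximate limit equal to $\bar f$ for $|\DIFF f|^c$-a.e.\ $x\in C_f$, any point $(x,t)$ with $x\in D\setminus J_f$ and $t\neq \bar f(x)$ is either an interior or an exterior density point of $\GG_f$, hence it is not in the essential boundary $\partial^*\GG_f$ on which $|\DIFF\nchi_{\GG_f}|$ is concentrated. For the jump fiber, one invokes the $\HH^{n-1}$-rectifiability of $J_f$ in the $\RCD$ setting together with a blow-up argument: at $\HH^{n-1}$-a.e.\ $x\in J_f$, the blow-up of $\GG_f$ at $(x,t)$ with $t\in(f^\wedge(x),f^\vee(x))$ is a half-space in the tangent cone $\RR^n\times\RR$, while outside $[f^\wedge(x),f^\vee(x)]$ the blow-up is trivial; translation invariance of the blow-up in $t$ then forces $\mu_x\mres(f^\wedge(x),f^\vee(x))$ to be proportional to Lebesgue measure, and the normalization is pinned down by matching with the known $\RCD$ representation $|\DIFF f|\mres J_f=(f^\vee-f^\wedge)\Theta_n(\mass,\cdot)\,\HH^{n-1}\mres J_f$, which also explains the appearance of $\Theta_n(\mass,x)$ in (iii).

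With the fiber structure in hand, the verification is routine. On $D\setminus(J_f\cup C_f)$, the density $\sqrt{1+g_f^2}$ from \Cref{thm:Principale1} cancels the factors $\pm(1+g_f^2)^{-1/2}$ of $\nu_{\GG_f}^u$, producing $-\mass$ for the vertical component and $g_f\nu_f^u\,\mass=\nu_f^u\,|\DIFF f|^a$ for the horizontal ones; on $D\cap C_f$, the normal is $(\nu_f^u,0)$ by \Cref{thmmain1} and $\pi^1_*|\DIFF\nchi_{\GG_f}|=|\DIFF f|^c$ there, contributing $\nu_f^u\,|\DIFF f|^c$ horizontally and nothing vertically. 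Summing and using $\mass(C_f)=0$ yields (i) and (ii). On $D\cap J_f$, (iv) is immediate since by \Cref{thmmain1} the $(n{+}1)$-th component of $\nu_{\GG_f}^u$ vanishes; for (iii) one integrates the horizontal part against the product measure identified above, with the factor $(f^\vee-f^\wedge)$ in $|\DIFF f|\mres J_f$ cancelling the $1/(f^\vee-f^\wedge)$ normalization of $\mu_x$ and leaving $\Theta_n(\mass,x)\,\HH^{n-1}\mres J_f$. The main obstacle is the jump-set fiber identification, which requires the full rectifiability theory for $\BV$ functions in $\RCD$ spaces from \cite{bru2019rectifiability, BGBV} together with a blow-up analysis for $\partial^*\GG_f$ at points $(x,t)$ with $x\in J_f$; a conceptually cleaner alternative would be to first verify all four identities for characteristic functions $f=\nchi_E$ (where the subgraph splits into a horizontal floor and a vertical lateral wall) and then reduce the general case via the coarea formula $|\DIFF f|=\int_\RR|\DIFF\nchi_{\{f>s\}}|\,\dd s$.
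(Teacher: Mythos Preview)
Your treatment of items i), ii) and iv) is correct and matches the paper's approach. The disintegration language is just a repackaging: the claim that $\mu_x=\delta_{\bar f(x)}$ for $x\in D\setminus J_f$ is exactly the content of \Cref{lem:char_G_f}, and the cancellations $\sqrt{1+g_f^2}\cdot(1+g_f^2)^{-1/2}=1$, $g_f\,\mass=|\DIFF f|^a$, together with $\pi^1_*|\DIFF\nchi_{\GG_f}|\mres C_f=|\DIFF f|^c$ and $(\nu_{\GG_f}^u)_{n+1}=0$ on the singular part, are precisely what the paper invokes via \Cref{thm:Principale1} and \Cref{thmmain1}.

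The gap is in item iii). Your identification of the jump fiber $\mu_x=(f^\vee(x)-f^\wedge(x))^{-1}\mathcal L^1\mres[f^\wedge(x),f^\vee(x)]$ is the correct target, but ``translation invariance of the blow-up in $t$'' does not establish it. Knowing that the blow-up of $\GG_f$ at every $(x,t)$ with $t\in(f^\wedge(x),f^\vee(x))$ is the \emph{same} vertical half-space only tells you that all such points lie in $\FF\GG_f$ and that the $\HH^n$-density of $|\DIFF\nchi_{\GG_f}|$ there equals $\Theta_n(\mass,x)$; it says nothing directly about how the disintegration measure $\mu_x$ sits on the fiber. To pass from this pointwise information to $\mu_x\propto\mathcal L^1$ you need a measure-theoretic Fubini statement for $\HH^n$ on the rectifiable set $\FF\GG_f\cap(J_f\times\RR)$, and that is not a consequence of the blow-up picture alone. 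The paper supplies exactly this missing ingredient: after writing $|\DIFF\nchi_{\GG_f}|=\Theta_n(\mass,\cdot)\,\HH^n\mres\FF\GG_f$ and slicing $J_f$ by the coarea levels $\FF E_s=\FF\{f>s\}$, the problem reduces to the product identity
\[
\HH^n\mres(\FF E_s\times\RR)=(\HH^{n-1}\mres\FF E_s)\otimes\HH^1,
\]
which is proved (at the end of \Cref{ccsnasno}) by covering $\FF E_s$ with $(1+\epsilon)$-bi-Lipschitz images of pieces of $\RR^{n-1}$, transporting to Euclidean space, using standard Fubini there, and letting $\epsilon\searrow 0$. This rectifiability-to-Euclidean-Fubini step is the genuine work in iii); your blow-up sketch does not replace it. Your suggested alternative via coarea is also not quite right as stated: coarea relates $|\DIFF f|$ to $\int|\DIFF\nchi_{\{f>s\}}|\,\dd s$, not $|\DIFF\nchi_{\GG_f}|$ to an integral of subgraph perimeters, so you cannot reduce directly to $f=\nchi_E$; the coarea formula enters only to decompose $J_f$ into pieces of reduced boundaries $\FF E_s$, after which the product identity above is still needed.
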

In \Cref{thmmain1} and \Cref{thmmain2}, we compare $\nu_{\GG_f}^u$ and $\nu_f^u$ only for a single good splitting map $u$, on its domain $D$. This, however, still allows us to have a complete picture (i.e.\ the comparison for $|\DIFF\nchi_{\GG_f}|$-a.e.\ $(x,t)$), thanks to the following result, taken from \cite[Lemma 2.27]{ABPrank}, which, in turn, is inspired by the techniques introduced in \cite{bru2019rectifiability}. The last part of the forthcoming statement is not explicitly written in \cite{ABPrank}, but it is a direct consequence of \eqref{diffandper}.
\begin{prop}\label{prop:Reduction}
Let $K\in\mathbb R$ and $N<\infty$. Let $(\XX,\dist,\mass)$ be an $\RCD(K,N)$ space of essential dimension $n$. Let also $\eta\in (0,n^{-1})$. Then there exists a family $\boldsymbol u_\eta=\{u_{\eta,k}\}_{k\in\mathbb N}$, where, for every $k\in\mathbb N$, $u_{\eta,k}$ is a good $\eta$-splitting map on $D_k\subseteq B_{r_k}(x_k)$, for some $x_k\in\XX$ and $r_k>0$, and moreover
    \begin{equation}\notag
	|\DIFF f|\bigg(\XX\setminus\bigcup_k D_k\bigg)=0,\qquad\text{for every }f\in\BV_{\rm loc}(\XX).
	\end{equation}
	In particular, it holds that
	$$
	|\DIFF\nchi_{\GG_f}|\bigg(\Big(\XX\setminus\bigcup_k D_k\Big)\times\RR\bigg)=0,\qquad\text{for every }f\in\BV_{\rm loc}(\XX).
	$$
\end{prop}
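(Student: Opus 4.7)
The plan is to split the argument in two: the existence of the countable family $\boldsymbol u_\eta = \{u_{\eta,k}\}_{k\in\NN}$ with $|\DIFF f|$-negligible complement is exactly \cite[Lemma 2.27]{ABPrank}, while the ``In particular'' clause is a direct consequence of the push-forward formula in \Cref{thm:Principale1}.

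Sketching the first part, the strategy in \cite{ABPrank} is to single out the set of $\eta$-regular points, i.e.\ those $x \in \XX$ at which good $\eta$-splitting maps exist on every sufficiently small scale. The Mondino--Naber structure theorem combined with the Cheeger--Colding almost-splitting theorem shows that this set has full $\mass$-measure in the $n$-dimensional regular stratum, hence in $\XX$. The crucial upgrade from $\mass$-a.e.\ to $|\DIFF f|$-a.e.\ for arbitrary $f \in \BV_{\rm loc}(\XX)$ exploits the fine structure of total variation measures developed in \cite{bru2019rectifiability, BGBV}: the singular part of $|\DIFF f|$ is concentrated on an $\HH^{n-1}$-rectifiable set of regular points, where a good $\eta$-splitting map is granted. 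A Lindelöf-type selection then produces the countable family of domains $D_k$.

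For the ``In particular'' clause, set $A \defeq \XX \setminus \bigcup_k D_k$. Applying the push-forward formula of \Cref{thm:Principale1} to the Borel set $A$ yields
\[|\DIFF\nchi_{\GG_f}|(A \times \RR) = \int_A \sqrt{g_f^2+1}\,\dd\mass + |\DIFF f|\bigl(A \cap (C_f \cup J_f)\bigr).\]
The second summand vanishes by the first part of the proposition applied to $f$ itself. For the first summand it suffices to show $\mass(A)=0$: I would apply the first part to $f_0 \defeq \dist(\cdot, x_0)$ for an arbitrary fixed $x_0 \in \XX$, which is a $1$-Lipschitz (hence locally $\BV$) function whose minimal weak upper gradient equals $1$ at $\mass$-a.e.\ point of $\XX$, so that $|\DIFF f_0|^a = \mass$ and therefore $\mass(A) \leq |\DIFF f_0|(A) = 0$. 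The only place where genuine work is required is the $\mass$-a.e.\ to $|\DIFF f|$-a.e.\ upgrade inside \cite[Lemma 2.27]{ABPrank}; once that and \Cref{thm:Principale1} are invoked, the remaining step is a routine consequence of the push-forward identity.
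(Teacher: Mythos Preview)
Your proposal is correct, and the first part (citing \cite[Lemma 2.27]{ABPrank}) coincides with the paper's treatment verbatim. For the ``In particular'' clause, however, you take a slight detour: you invoke the full push-forward formula of \Cref{thm:Principale1}, whereas the paper deduces the claim directly from the cruder two-sided bound \eqref{diffandper}, namely
\[
|\DIFF\nchi_{\GG_f}|(A\times\RR)=\pi^1_*|\DIFF\nchi_{\GG_f}|(A)\le |\DIFF f|(A)+\mass(A).
\]
Both routes need \(\mass(A)=0\), and your distance-function argument for this is fine (and is implicitly what the paper relies on as well, since the first part is stated for \emph{every} \(f\in\BV_{\rm loc}(\XX)\)). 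The practical difference is one of logical hygiene: the proof of \Cref{thm:Principale1} in the paper itself \emph{uses} \Cref{prop:Reduction} (its first part) to globalize \Cref{area}, so citing \Cref{thm:Principale1} inside the proof of \Cref{prop:Reduction} creates an apparent circularity. It is not an actual one, because \Cref{thm:Principale1} only needs the first part of \Cref{prop:Reduction}, but the paper's choice of \eqref{diffandper}---established independently via \cite[Theorem 5.1]{Ambrosio-Pinamonti-Speight15} before any of the main results---sidesteps the issue entirely and keeps the dependency graph clean.
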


We spend a few lines about the strategy of the proof of \Cref{thmmain1}, as once it is obtained, 
\Cref{thmmain2} follows quite easily.  The classical strategy of \cite{GMSCartCurr} seems not suitable for our context, as we do not have a canonical way to decompose the distributional derivatives $\DIFF f$ and $\DIFF \nchi_{\GG_f}$ along different directions. This also causes the need to define the `components' $\big(\nu_{f}^u\big)_{i}$ $\big(\nu_{\GG_f}^u\big)_{i}$ exploiting maps that look like charts. The drawback is that these charts are defined only on Borel subsets, hence it is not clear the distributional nature of the objects $\big(\nu_{f}^u\big)_{i}$ and  $\big(\nu_{\GG_f}^u\big)_{i}$. Nevertheless, in our main result we compare $\big(\nu_{f}^u\big)_{i}$ with  $\big(\nu_{\GG_f}^u\big)_{i}$. In order to do so, a new strategy has to be exploited, and we therefore employ a blow-up procedure, which is more compatible with the use of Geometric Measure Theory results and does not need the distributional meaning of such objects. This strategy is in \Cref{sec:Main}, after \Cref{sec:Prel} in which some preliminary facts are discussed.

\subsection*{Acknowledgments}
The third named author acknowledges the support by the Balzan project led by Luigi Ambrosio.

\section{Preliminaries}\label{sec:Prel}
Given \(n\in\mathbb N\) and non-empty sets \(\XX_1,\dots,\XX_n\), for any \(i=1,\ldots,n\) we will denote by \(\pi^i\)
the projection of the Cartesian product \(\XX_1\times\dots\times\XX_n\) onto its \(i^{\rm th}\) factor:
\[
\pi^i\colon\XX_1\times\dots\times\XX_n\to\XX_i,\qquad(x_1,\dots,x_n)\mapsto x_i.
\]
\subsection{Metric measure spaces}
We say that a metric measure space \((\XX,\dist,\mass)\) is \emph{uniformly locally doubling} if for every radius \(R>0\)
there exists a constant \(C_D>0\) such that
\[
\mass(B_{2r}(x))\leq C_D\mass(B_r(x)),\qquad\text{ for every }x\in\XX\text{ and }r\in(0,R).
\]
Moreover, we say that \((\XX,\dist,\mass)\) supports a \emph{weak local \((1,1)\)-Poincar\'{e} inequality} if there exists
a constant \(\lambda\ge 1\) for which the following property holds: given any \(R>0\), there exists a constant \(C_P>0\) such that
for any function \(f\in{\rm LIP}_{\mathrm{loc}}(\XX)\) it holds that
\[
\fint_{B_r(x)}\bigg|f-\fint_{B_r(x)}f\,\diff\mass\bigg|\,\diff\mass\leq C_P r\fint_{B_{\lambda r}(x)}\lip f\,\diff\mass,
\qquad\text{ for every }x\in\XX\text{ and }r\in(0,R),
\]
where, for every $x\in \XX$,
\[
\lip f(x)\coloneqq\varlimsup_{y\to x}\frac{|f(x)-f(y)|}{\dist(x,y)},
\]
which has to be understood as $0$ if $x$ is isolated.

Uniformly locally doubling spaces supporting a weak local $(1,1)$-Poincar\'{e} inequality are usually called \emph{$\rm PI$ spaces}.

\subsubsection{BV calculus}\label{sub:BVCalculus}

We recall the notions of function of bounded variation and of finite perimeter set in the metric measure setting
following \cite{MIRANDA2003}.
\begin{defn}[Function of bounded variation]
Let $(\XX,\dist,\mass)$ be a metric measure space. Let $f\in L^1_{\mathrm{loc}}(\XX,\mass)$ be given. Then we define
\[
|\DIFF f|(\Omega) \coloneqq \inf\bigg\{\varliminf_{i\to\infty}\int_\Omega\lip f_i\,\diff\mass\;\bigg|\;\text{$(f_i)_{i\in\mathbb N}\subseteq{\rm LIP}_{\rm loc}(\Omega),
\,f_i \to f $ in $L^1_{\mathrm{loc}}(\Omega,\mass)$}\bigg\},
\]
for any open set $\Omega\subseteq\XX$. We declare that a function \(f\in L^1_{\rm loc}(\XX,\mass)\) is of \emph{local bounded variation},
briefly \(f\in{\rm BV}_{\rm loc}(\XX)\), if \(|\DIFF f|(\Omega)<+\infty\) for every \(\Omega\subseteq\XX\) open bounded. In this case, it is well-known that $|\DIFF f|$ extends to a locally finite measure on $\XX$. Moreover,
a function $f \in L^1(\mass)$ is said to belong to the space of \emph{functions of bounded variation}
${\rm BV}(\XX)={\rm BV}(\XX,\dist,\mass)$ if $|\DIFF f|(\XX)<+\infty$. 
\end{defn}
\begin{defn}[Set of finite perimeter]
Let \((\XX,\dist,\mass)\) be a metric measure space. Let \(E\subseteq\XX\) be a Borel set and \(\Omega\subseteq\XX\) an open set.
Then we define the \emph{perimeter} of $E$ in $\Omega$ as
\[
P(E,\Omega) \coloneqq \inf\bigg\{\varliminf_{i\to\infty}\int_\Omega \lip f_i\,\diff\mass\;\bigg|\;\text{$(f_i)_{i\in\mathbb N}\subseteq{\rm LIP}_{\rm loc}(\Omega),
\,u_i \to \nchi_E $ in $L^1_{\rm loc}(\Omega,\mass)$} \bigg\},
\]
in other words \(P(E,\Omega)\coloneqq|\DIFF\nchi_E|(\Omega)\).
We say that $E$ has \emph{locally finite perimeter} if $P(E,\Omega)<+\infty$ for every \(\Omega\subseteq\XX\) open bounded.
Moreover, we say that $E$ has \emph{finite perimeter} if $P(E,\XX)<+\infty$.
\end{defn}
The following coarea formula is taken from \cite{MIRANDA2003}.
\begin{thm}[Coarea]\label{coarea}
Let \((\XX,\dist,\mass)\) be a metric measure space. Let \(f\in L^1_{\rm loc}(\mass)\) be given. Then for any Borel set \(E\subseteq\XX\)
it holds that the function \(\RR\ni t\mapsto P(\{f>t\},E)\in[0,+\infty]\) is Borel measurable and
\[
|\DIFF f|(E)=\int_\RR P(\{f>t\},E)\,\dd t.
\]
In particular, it holds that \(f\in\BV(\XX)\) if and only if \(\{f>t\}\) is a set of finite perimeter for a.e.\ \(t\in\RR\)
and the function \(t\mapsto P(\{f>t\},\XX)\) belongs to \(L^1(\RR)\).
\end{thm}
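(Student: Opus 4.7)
The plan is to establish the identity $|\DIFF f|(\Omega) = \int_\RR P(\{f > t\}, \Omega)\,\dd t$ for every open bounded $\Omega \subseteq \XX$; the extension to general Borel $E$ then follows since both sides are Radon measures in the set variable that agree on open sets. Borel measurability of $t \mapsto P(\{f > t\}, E)$ comes from the $L^1_{\rm loc}$-continuity of $t \mapsto \nchi_{\{f > t\}}$ off the countably many atoms of the distribution function of $f$, combined with lower semicontinuity of perimeter. The ``in particular'' statement is then an immediate consequence of the identity applied to $E = \XX$.

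For the inequality $\int_\RR P(\{f > t\}, \Omega)\,\dd t \le |\DIFF f|(\Omega)$, I would first treat the Lipschitz case: given $g \in \LIP_{\rm loc}(\Omega)$, the truncations $h^t_\epsilon \defeq \min(\max((g-t)/\epsilon, 0), 1)$ converge in $L^1_{\rm loc}(\Omega)$ to $\nchi_{\{g > t\}}$ as $\epsilon \to 0^+$ and satisfy $\lip h^t_\epsilon \le \epsilon^{-1}\nchi_{\{t < g < t+\epsilon\}}\lip g$. Lower semicontinuity of perimeter, combined with Fubini and Fatou, then yields
\[
\int_\RR P(\{g > t\}, \Omega)\,\dd t \le \int_\Omega \lip g\,\dd\mass.
\]
For a general $f$, take an optimal Lipschitz approximation $f_i \to f$ in $L^1_{\rm loc}(\Omega)$ with $\int_\Omega \lip f_i\,\dd\mass \to |\DIFF f|(\Omega)$; since $\nchi_{\{f_i > t\}} \to \nchi_{\{f > t\}}$ in $L^1_{\rm loc}(\Omega)$ for a.e.\ $t$, lower semicontinuity of perimeter and Fatou give
\[
\int_\RR P(\{f > t\}, \Omega)\,\dd t \le \liminf_i \int_\RR P(\{f_i > t\}, \Omega)\,\dd t \le \liminf_i \int_\Omega \lip f_i\,\dd\mass = |\DIFF f|(\Omega).
\]

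The reverse inequality $|\DIFF f|(\Omega) \le \int_\RR P(\{f > t\}, \Omega)\,\dd t$, which is the hard part, proceeds by step-function approximation. After reducing to $f \in L^\infty$ with $f \ge 0$ via truncation and sign splitting, I would consider the shifted dyadic approximations $f_n^\tau \defeq n^{-1}\sum_{k \ge 1}\nchi_{\{f > \tau + k/n\}}$ for $\tau \in (0, 1/n)$, which satisfy $\|f_n^\tau - f\|_\infty \le 2/n$ uniformly in $\tau$; subadditivity of total variation gives
\[
|\DIFF f_n^\tau|(\Omega) \le \frac{1}{n}\sum_{k \ge 1} P(\{f > \tau + k/n\}, \Omega).
\]
Averaging over $\tau \in (0, 1/n)$ via Fubini, the mean of the right-hand side equals $\int_{1/n}^\infty P(\{f > s\}, \Omega)\,\dd s$, which tends to $\int_0^\infty P(\{f > s\}, \Omega)\,\dd s$ as $n \to \infty$ (the finiteness of this integral being supplied by the previous direction, else there is nothing to prove). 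Hence there exist $\tau_n$ with $|\DIFF f_n^{\tau_n}|(\Omega) \le \int_0^\infty P(\{f > s\}, \Omega)\,\dd s + o(1)$, and lower semicontinuity of total variation under $L^1_{\rm loc}$ convergence concludes. The main obstruction to a more elementary argument is precisely the non-monotonicity of $t \mapsto P(\{f > t\}, \Omega)$, which rules out a naive dyadic Riemann-sum argument; the shift-averaging above (equivalent to selecting partition points at Lebesgue points of an already-$L^1$ integrand) is what bypasses this.
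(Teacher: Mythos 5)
Your argument is correct, but note that the paper offers no proof of this statement at all: it is quoted verbatim from \cite{MIRANDA2003}, so there is nothing internal to compare against. What you wrote is essentially the standard proof from that reference — the truncations $\min(\max((g-t)/\epsilon,0),1)$ plus Fubini/Fatou and relaxation for $\int_\RR P(\{f>t\},\Omega)\,\dd t\le|\DIFF f|(\Omega)$, and the shift-averaged step approximation together with subadditivity and $L^1_{\rm loc}$-lower semicontinuity of the total variation for the converse — so it is consistent with the source the authors rely on.
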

We remark that the weak local $(1,1)$-Poincaré inequality of $\rm PI$ spaces holds for $\BV_{\rm loc}(\XX)$ functions $f$ in the following form: for every bounded open set $\Omega\subseteq \XX$ and every $R>0$, 
\[
\fint_{B_r(x)}\bigg|f-\fint_{B_r(x)}f\,\diff\mass\bigg|\,\diff\mass\leq C_P(N,K,\Omega,R) r\frac{|\DIFF f|({B_{\lambda r}(x)})}{\mass(B_{\lambda r}(x))},
\quad\text{ for every }x\in\Omega\text{ and }r\in(0,R).
\]
In the particular case of sets of (locally) finite perimeter, the $(1,1)$-Poincaré inequality reads as a local isoperimetric inequality, as can be shown with classical computations:
\begin{equation}\label{isop}
    \min\big\{\mass( B_r(x)\cap E),\mass( B_r(x)\setminus E)\big\}\le 2 C_P r|\DIFF\nchi_E|(B_{\lambda r}(x)),\quad\text{ for every }x\in\Omega\text{ and }r\in(0,R).
\end{equation}

The following proposition summarizes results about sets of finite perimeter that are now well-known in the context of PI spaces and are proved in \cite{amb01,erikssonbique2018asymptotic}, see also \cite{AmbrosioAhlfors}.
\begin{prop}\label{zuppa}
Let $(\XX,\dist,\mass)$ be a $\rm PI$ space and let $E\subseteq\XX$ be a set of locally finite perimeter. Then, for $|\DIFF\nchi_E|$-a.e.\ $x\in\XX$ the following hold:
\begin{enumerate}[label=\roman*)]
    \item $E$ is \emph{asymptotically minimal} at $x$, i.e., there exist $r_x>0$ and a function $\omega_x:(0,r_x)\rightarrow(0,\infty)$ with $\lim_{r\searrow 0}\omega_x(r)=0$ satisfying
    $$
    |\DIFF\nchi_E|(B_r(x))\le(1+\omega_x(r))|\DIFF\nchi_{E'}|(B_r(x)),\qquad\text{if $r\in (0,r_x)$ and $E'\Delta E\Subset B_r(x),$}
    $$
    \item $|\DIFF\nchi_E|$ is \emph{asymptotically doubling} at $x$, i.e.,
    $$\limsup_{r\searrow 0}\frac{|\DIFF\nchi_E| (B_{2 r}(x))}{|\DIFF\nchi_E| (B_{r}(x))}<\infty, $$
    \item we have the following estimates:
    $$
    0<\liminf_{r\searrow 0} \frac{r |\DIFF\nchi_E|(B_r(x))}{\mass(B_r(x))}\le \limsup_{r\searrow 0}\frac{r |\DIFF\nchi_E|(B_r(x))}{\mass(B_r(x))}<\infty,
    $$
    \item the following holds:
    $$ 
    \liminf_{r\searrow 0} \min\bigg\{\frac{\mass( B_r(x)\cap E)}{\mass(B_r(x))},\frac{\mass( B_r(x)\setminus E)}{\mass(B_r(x))}\bigg\}>0.$$
\end{enumerate}
\end{prop}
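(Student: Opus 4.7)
The four assertions collected in this proposition are all established, in some form, in the literature on sets of finite perimeter in $\rm PI$ spaces (see \cite{amb01, erikssonbique2018asymptotic, AmbrosioAhlfors}). My plan is to collect them from these sources, sketching how each follows from the local isoperimetric inequality~\eqref{isop}, the uniform local doubling of $\mass$, and the classical lower semicontinuity of the perimeter.

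The technical core is (iii), the two-sided estimate for $r|\DIFF\nchi_E|(B_r(x))/\mass(B_r(x))$. For the lower bound I would use (iv) together with \eqref{isop}: at a point $x$ where the densities of both $E$ and $E^c$ are bounded below by some $c>0$, the inequality
\[
c\,\mass(B_r(x))\leq\min\{\mass(B_r(x)\cap E),\mass(B_r(x)\setminus E)\}\leq 2C_P\,r\,|\DIFF\nchi_E|(B_{\lambda r}(x))
\]
combined with doubling of $\mass$ gives a positive $\liminf$. The upper bound in (iii) does not follow from \eqref{isop} alone; it requires the estimate $P(B_r(x),\XX)\lesssim\mass(B_r(x))/r$ for a.e.\ $r$, obtainable via the coarea formula applied to $\dist(\cdot,x)$, together with a competitor argument comparing $E$ with $E\setminus B_r(x)$, which is ultimately intertwined with the asymptotic minimality (i). For this reason I would follow \cite{amb01}, where (i), (iii) and (iv) are in fact proved jointly via a De Giorgi-type iteration driven by \eqref{isop}.

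Once (iii) is in hand, assertion (ii) is immediate from the uniform local doubling of $\mass$: asymptotically as $r\to 0$,
\[
\frac{|\DIFF\nchi_E|(B_{2r}(x))}{|\DIFF\nchi_E|(B_r(x))}\approx\frac{\mass(B_{2r}(x))/(2r)}{\mass(B_r(x))/r}\leq C_D.
\]

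The main obstacle is the joint derivation of (i), the lower bound in (iii), and (iv), which is the heart of \cite{amb01,AmbrosioAhlfors}. The idea there is to fix a $|\DIFF\nchi_E|$-Lebesgue point $x$, compare with a competitor $E'$ with $E\Delta E'\Subset B_r(x)$, and exploit lower semicontinuity of the perimeter combined with a slicing argument on the annulus $B_r(x)\setminus B_{(1-\varepsilon)r}(x)$ to absorb the extra perimeter into a multiplicative factor $1+\omega_x(r)$ with $\omega_x(r)\to 0$; simultaneously, the same iteration produces both the density lower bounds for $E$ and $E^c$ appearing in (iv) and the quantitative perimeter estimate in (iii). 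Since reproducing the full chain of these arguments would be long and largely a transcription from the cited papers, I would refer the reader to the proofs therein and treat Proposition~\ref{zuppa} as a package of classical $\rm PI$-space results.
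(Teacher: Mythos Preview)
Your proposal is correct and matches the paper's own treatment: the paper does not prove Proposition~\ref{zuppa} either, but simply states it as a summary of known results and cites \cite{amb01,erikssonbique2018asymptotic} (see also \cite{AmbrosioAhlfors}), exactly the sources you invoke. Your sketch of how (ii) follows from (iii) via doubling, and how (i), (iii), (iv) are obtained jointly in \cite{amb01} from the relative isoperimetric inequality and a competitor/iteration argument, is accurate and in fact goes beyond what the paper itself provides.
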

\medskip
We recall the following classical definition.
\begin{defn}[Precise representative]\label{PreciseRep}
Let $(\XX,\dist,\mass)$ be a metric measure space, and let $f\colon\XX\to\RR$ be a Borel function.		
Then we set the \emph{approximate lower} and \emph{upper limits} to be
\begin{alignat*}{3}
	f^{\wedge}(x)&\defeq \apliminf_{y\rightarrow x} f(y)&&\defeq\sup&&\left\{t\in\bar{\RR}\;:\;\lim_{r\searrow 0} \frac{\mass(B_r(x)\cap\{f<t\})}{\mass(B_r(x))}=0\right\}, \\
	f^{\vee}(x)&\defeq \aplimsup_{y\rightarrow x} f(y)&&\defeq\inf&&\left\{t\in\bar{\RR}\;:\;\lim_{r\searrow 0} \frac{\mass(B_r(x)\cap\{f>t\})}{\mass(B_r(x))}=0\right\},
\end{alignat*}
for every \(x\in\XX\). Here we are assuming by convention that
$$\inf\varnothing=+\infty\qquad \text{and}\qquad\sup\varnothing=-\infty.$$
Moreover, we define the \emph{precise representative} \(\bar f\colon\XX\to\bar\RR\) of \(f\) as
\[
\bar f(x)\coloneqq\frac{f^\wedge(x)+f^\vee(x)}{2},\qquad\text{ for every }x\in\XX,
\]
where we declare that \(+\infty-\infty=0\).
\end{defn}
We define the \emph{jump set} \(J_f\subseteq\XX\) of the function \(f\) as the Borel set
\[
J_f\coloneqq\big\{x\in\XX\,:\,f^\wedge(x)<f^\vee(x)\big\}.
\]
It is known that if \((\XX,\dist,\mass)\) is a PI space and \(f\in\BV(\XX)\), then \(\mass(J_f)=0\),
see \cite[Proposition 5.2]{ambmirpal04}.
Moreover, as proved in \cite[Lemma 3.2]{kinkorshatuo}, it holds that
\begin{equation*}
|\DIFF f|(\XX\setminus\XX_f)=0,\qquad\text{ where }\XX_f\coloneqq\big\{x\in\XX\;\big|\;-\infty<f^\wedge(x)\leq f^\vee(x)<+\infty\big\},
\end{equation*}
thus in particular \(-\infty<\bar f(x)<+\infty\) holds for \(|\DIFF f|\)-a.e.\ \(x\in\XX\).
\begin{defn}[Decomposition of the total variation measure]\label{def:Decomposition}
	Let \((\XX,\dist,\mass)\) be a PI space and let $f\in\BV_{\rm loc}(\XX)$.
	We write \(|\DIFF f|\) as \(|\DIFF f|^a+|\DIFF f|^s\),
	where \(|\DIFF f|^a\ll\mass\) and \(|\DIFF f|^s\perp\mass\). We can decompose the singular part \(|\DIFF f|^s\)
	as \(|\DIFF f|^j+|\DIFF f|^c\), where the \emph{jump part} is given by \(|\DIFF f|^j\defeq|\DIFF f|\mres J_f\), while the
	\emph{Cantor part} is given by \(|\DIFF f|^c\defeq|\DIFF f|^s\mres (\XX\setminus J_f)\), so that we can write $|\DIFF f|^c=|\DIFF f|\mres C_f$ with $\mass(C_f)=0$. Finally, we write $|\DIFF f|^a=g_f\mass$.
\end{defn}

We recall the definition of subgraph, and \cite[Lemma 2.11]{ABPrank}.
\begin{defn}
Let $(\XX,\dist,\mass)$ be a metric measure space and let $f:\XX\rightarrow\RR$ be Borel.
Then we define the \emph{subgraph} of $f$ as the Borel set \(\GG_f\subseteq\XX\times\RR\) given by
$$\GG_f\defeq\big\{(x,t)\in\XX\times\RR\,:\,t<f(x)\big\}.$$
\end{defn}

Before stating the next result, we remind that the \emph{essential boundary} \(\partial^*E\) of \(E\subseteq\XX\) Borel is
\[
\partial^*E\coloneqq\bigg\{x\in\XX\;\bigg|\;\limsup_{r\searrow 0}\frac{\mass(B_r(x)\cap E)}{\mass(B_r(x))}>0,\,
\limsup_{r\searrow 0}\frac{\mass(B_r(x)\setminus E)}{\mass(B_r(x))}>0\bigg\}.
\]
\begin{lem}\label{lem:char_G_f}
	Let $(\XX,\dist,\mass)$ be a uniformly locally doubling metric measure space and $f\in \BV_{\rm loc}(\XX)$. Then it holds that
	\begin{align*}
		({x},{t})\in\partial^*\GG_f&\quad\Rightarrow\quad t\in [f^\wedge(x),f^\vee(x)],\\
		t\in (f^\wedge(x),f^\vee(x))&\quad\Rightarrow\quad(x,t)\in\partial^*\GG_f.
	\end{align*}
	In particular, if \(x\in\XX_f\setminus J_f\), then it holds that \(\partial^*\GG_f\cap(\{x\}\times\RR)=\{(x,\bar f(x))\}\).
\end{lem}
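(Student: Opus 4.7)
My plan is to reduce each of the three claims to direct density computations, using the definitions of $f^\wedge, f^\vee$ and uniform local doubling to convert between product balls in $\XX \times \RR$ and ordinary balls in $\XX$.

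For the first implication I argue by contrapositive, treating $t > f^\vee(x)$ (the case $t < f^\wedge(x)$ is symmetric, swapping $\GG_f$ with its complement). Pick a threshold $\tau \in (f^\vee(x), t)$: the definition of $f^\vee$ gives $\mass(B_r(x) \cap \{f > \tau\})/\mass(B_r(x)) \to 0$. For any $r < t - \tau$, every point $(x', s) \in B_r(x, t) \cap \GG_f$ satisfies $s > t - r > \tau$ and $s < f(x')$, forcing $x' \in B_r(x) \cap \{f > \tau\}$. Thus $\mass \otimes \mathcal{L}^1(B_r(x, t) \cap \GG_f) \le 2r \cdot \mass(B_r(x) \cap \{f > \tau\})$. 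An inscribed product ball gives $\mass \otimes \mathcal{L}^1(B_r(x, t)) \ge c \cdot r \cdot \mass(B_{r/2}(x))$, which by uniform local doubling exceeds a constant multiple of $r \cdot \mass(B_r(x))$. Dividing, the density of $\GG_f$ at $(x, t)$ tends to $0$, contradicting $(x, t) \in \partial^* \GG_f$.

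For the second implication I construct explicit positive-measure subsets of $\GG_f$ and $\GG_f^c$ inside $B_r(x, t)$ along suitable sequences of radii. Choose $\tau^+ \in (t, f^\vee(x))$: since $\tau^+ < f^\vee(x)$, the definition of $f^\vee$ forces $\limsup_{r \to 0} \mass(B_r(x) \cap \{f > \tau^+\})/\mass(B_r(x)) > 0$. The product set $(B_r(x) \cap \{f > \tau^+\}) \times (t - r, t)$ lies inside $\GG_f \cap B_r(x, t)$ because its points satisfy $s < t < \tau^+ < f(x')$. Combined with the upper bound $\mass \otimes \mathcal{L}^1(B_r(x, t)) \le 2r \cdot \mass(B_r(x))$, this yields a positive $\limsup$ of the $\GG_f$-density at $(x, t)$. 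Symmetrically, picking $\tau^- \in (f^\wedge(x), t)$ and using the set $(B_r(x) \cap \{f < \tau^-\}) \times (t, t + r) \subseteq \GG_f^c \cap B_r(x, t)$ gives positive $\limsup$ for the complementary density, so $(x, t) \in \partial^* \GG_f$.

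The corollary then follows: if $x \in \XX_f \setminus J_f$ then $f^\wedge(x) = f^\vee(x) = \bar f(x) \in \RR$, so the first implication immediately gives $\partial^* \GG_f \cap (\{x\} \times \RR) \subseteq \{(x, \bar f(x))\}$. For the reverse inclusion one compares $\GG_f$ with the flat half-space $\widetilde{\GG} \defeq \XX \times (-\infty, \bar f(x))$, which has density $1/2$ at $(x, \bar f(x))$ by Fubini: the $x'$-slice of the symmetric difference $(\GG_f \triangle \widetilde{\GG}) \cap B_r(x, \bar f(x))$ has $\mathcal{L}^1$-measure at most $\min(|f(x') - \bar f(x)|, 2r)$, and a truncation at threshold $\epsilon r$ combined with the approximate continuity $f^\wedge(x) = f^\vee(x) = \bar f(x)$ shows this integrates to $o(r \mass(B_r(x)))$, so $\GG_f$ inherits density $1/2$. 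This last quantitative truncation is the main technical point; everything else is elementary containment combined with uniform local doubling.
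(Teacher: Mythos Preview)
The paper does not prove this lemma itself; it is quoted from \cite[Lemma~2.11]{ABPrank}. Your arguments for the two displayed implications are correct (up to harmless constants depending on which product metric is placed on $\XX\times\RR$, easily absorbed via doubling), and the containment $\partial^*\GG_f\cap(\{x\}\times\RR)\subseteq\{(x,\bar f(x))\}$ for $x\in\XX_f\setminus J_f$ follows as you say from the first implication.

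The genuine gap is in your argument for the reverse inclusion. The truncation at threshold $\epsilon r$ cannot work: approximate continuity at $x$ only controls $\mass\big(B_r(x)\cap\{|f-\bar f(x)|>\delta\}\big)/\mass(B_r(x))$ for each \emph{fixed} $\delta>0$, not for the moving threshold $\delta=\epsilon r\to 0$. In fact the conclusion you are aiming for---that $(x,\bar f(x))\in\partial^*\GG_f$ whenever $x\in\XX_f\setminus J_f$---is false in general. Take $\XX=\RR$ with Lebesgue measure, $f(y)=|y|^{1/2}\in W^{1,1}_{\rm loc}(\RR)\subseteq\BV_{\rm loc}(\RR)$, and $x=0$. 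Using the $\ell^\infty$ product metric one computes
\[
\frac{(\mass\otimes\mathcal L^1)\big(B_r(0,0)\setminus\GG_f\big)}{(\mass\otimes\mathcal L^1)\big(B_r(0,0)\big)}
=\frac{2\int_0^{r^2}(r-y^{1/2})\,\diff y}{4r^2}=\frac{r}{6}\longrightarrow 0,
\]
so $(0,0)$ is a point of density $1$ for $\GG_f$ and hence $(0,0)\notin\partial^*\GG_f$. Thus the equality in the ``In particular'' clause cannot hold at \emph{every} $x\in\XX_f\setminus J_f$; only the inclusion $\subseteq$ is true pointwise. What the paper actually uses downstream (e.g.\ for \eqref{massandpi}) is the much weaker fact that the reverse inclusion holds for $\mass$-a.e.\ $x$, which your first two implications do not address and which requires a different argument.
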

\subsection{RCD spaces}\label{sec:RCDspaces}
We assume the reader is familiar with the theory of \(\RCD(K,N)\) spaces. Recall that an \(\RCD(K,N)\) space is an infinitesimally Hilbertian
metric measure space verifying the Curvature-Dimension condition \({\rm CD}(K,N)\), in the sense of Lott--Villani--Sturm, for some \(K\in\RR\)
and \(N\in[1,\infty]\). In this paper we only consider finite-dimensional \(\RCD(K,N)\) spaces, namely we assume \(N<\infty\).
Finite-dimensional \(\RCD\) spaces are PI. If not otherwise stated, through this note we will work in the setting of finite-dimensional $\RCD$ spaces. 

\subsubsection{Pointed measured Gromov--Hausdorff convergence and tangents}
Let us recall some classical facts about pointed measured Gromov--Hausdorff convergence and tangents in the setting of $\RCD(K,N)$ spaces. The exposition here is equivalent to the classical one in more general settings (see e.g.\ \cite{GMS15}), due to the result in \cite[Theorem 4.1]{AHT17}. See also the introduction given in \cite[Section 2.1.2]{ABPrank}.
\begin{defn}[Pointed measured Gromov-Hausdorff convergence]
Let \((\XX,\dist,\mass,p),(\XX_i,\dist_i,\mass_i,p_i)\),
for \(i\in\mathbb N\), be $\RCD(K,N)$ spaces with $K\in\mathbb R$ and $N<\infty$. Then we say that \((\XX_i,\dist_i,\mass_i,p_i)\to(\XX,\dist,\mass,p)\) in the \emph{pointed measured Gromov-Hausdorff sense}
(briefly, in the \emph{pmGH sense}) provided there exist a proper metric space \((\ZZ,\dist_\ZZ)\) and isometric embeddings
\(\iota\colon\XX\to\ZZ\) and \(\iota_i\colon\XX_i\to\ZZ\) for \(i\in\mathbb N\) such that \(\iota_i(p_i)\to\iota(p)\) and
\((\iota_i)_*\mass_i\rightharpoonup\iota_*\mass\) in duality with \(C_{\rm bs}(\ZZ)\), meaning that \(\int f\circ\iota_i\,\diff\mass_i
\to\int f\circ\iota\,\diff\mass\) for every \(f\in C_{\rm bs}(\ZZ)\). The space \(\ZZ\) is called a \emph{realization} of the pmGH convergence
\((\XX_i,\dist_i,\mass_i,p_i)\to(\XX,\dist,\mass,p)\).
\end{defn}
For brevity, we will identify \((\iota_i)_*\mass_i\) with \(\mass_i\) itself.
It is possible to construct a distance \(\dist_{\rm pmGH}\) on the collection (of equivalence classes) of $\RCD(K,N)$ spaces whose converging sequences are exactly those converging
in the pointed measured Gromov-Hausdorff sense. Moreover, the class of $\RCD(K,N)$ metric measure spaces is compact in the pmGH topology.
\begin{defn}[pmGH tangent]
Let $(\XX,\dist,\mass)$ be an $\RCD(K,N)$ metric measure space. For every $r>0$ and every $x\in\XX$ we define
\[
\mass_x^r:=\mass(B_r(x))^{-1}\mass.
\]
Then, for every $p\in\XX$,
\[
{\rm Tan}_p(\XX,\dist,\mass)\coloneqq\bigg\{(\YY,\dist_\YY,\mass_\YY,q)\;\bigg|\;
\exists\,r_i\searrow 0:\,(\XX,r_i^{-1}\dist,\mass_p^{r_i},p)\overset{\rm pmGH}\longrightarrow(\YY,\dist_\YY,\mass_\YY,q)\bigg\}.
\]
\end{defn}
It is known that \({\rm Tan}_p(\XX,\dist,\mass)\) is (well-defined and) non-empty.
\subsubsection{Structure results for $\RCD$ spaces}
Let us recall the definition of the regular set and some well-known structure results in the setting of $\RCD$ spaces.
\begin{defn}[Regular set]
Let \(n\in\mathbb N\) be given. Let \(\dist_e\) stands for the Euclidean distance
\(\dist_e(x,y)\coloneqq|x-y|\) on \(\RR^n\), while \(\underline{\mathcal L}^n\)
is the normalized measure
 \(\underline{\mathcal L}^n=\frac{n+1}{\omega_n}\mathcal L^n\), where $\omega_n$ is the volume of the unit ball in $\mathbb R^n$. Then the set of \emph{\(n\)-regular points} of an $\RCD(K,N)$ space
\((\XX,\dist,\mass)\) is defined as
\[
\mathcal R_n=\mathcal R_n(\XX)\coloneqq\Big\{x\in\XX\;\Big|\;{\rm Tan}_x(\XX,\dist,\mass)=\big\{(\RR^n,\dist_e,\underline{\mathcal L}^n,0)\big\}\Big\}.
\]
\end{defn}

As proven in \cite{Mondino-Naber14,GP16-2,DPMR16,KelMon16,bru2018constancy}, the following structure theorem holds.
\begin{thm}\label{thm:Structure}
Let \((\XX,\dist,\mass)\) be an \(\RCD(K,N)\) space. Then there exists a (unique) number \(n\in\mathbb N\) with \(1\leq n\leq N\), called the
\emph{essential dimension} of \((\XX,\dist,\mass)\), such that \(\mass(\XX\setminus\mathcal R_n)=0\). Moreover, the regular set
\(\mathcal R_n\) is \((\mass,n)\)-rectifiable and it holds that \(\mass\ll\mathcal H^n\).
\end{thm}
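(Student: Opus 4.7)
The plan is to establish four distinct pieces in order: (i) nonemptiness of tangent cones and a pointwise Euclidean structure m-a.e., (ii) (m,k)-rectifiability of each stratum $\mathcal R_k$, (iii) absolute continuity $\mass\ll\mathcal H^k$ on $\mathcal R_k$, and (iv) the \emph{constancy} of dimension, namely that exactly one $k=n$ satisfies $\mass(\mathcal R_k)>0$. Each part is cited from a different work; I sketch how I would organize them into a single proof.

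First, uniform local doubling (from Bishop--Gromov) gives Gromov precompactness, and stability of $\RCD(K,N)$ under pmGH convergence guarantees that $\mathrm{Tan}_x(\XX,\dist,\mass)\neq\varnothing$ and consists of $\RCD(0,N)$ cones at m-a.e.\ $x$. I would then invoke the ``almost volume cone $\Rightarrow$ almost metric cone'' principle (available in the $\RCD$ setting via the sharp Bishop--Gromov monotonicity), combined with Lebesgue differentiation applied to the densities $r\mapsto\mass(B_r(x))/r^k$, to conclude that m-a.e.\ tangent is a metric measure cone. Iterating (tangents of tangents are tangents) and applying the Gigli splitting theorem in $\RCD(0,N)$, one produces at m-a.e.\ $x$ some tangent isometric to $(\mathbb R^{k(x)},\dist_e,\underline{\mathcal L}^{k(x)},0)$ with $k(x)\in\{1,\dots,\lfloor N\rfloor\}$; this gives a measurable stratification $\XX=\bigcup_k \mathcal R_k'$ up to a m-null set, where $\mathcal R_k'$ is the set where \emph{some} tangent is $\mathbb R^k$.

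Next, I would upgrade ``some tangent is Euclidean'' to ``every tangent is Euclidean'' at m-a.e.\ point, i.e.\ $\mass(\mathcal R_k'\setminus\mathcal R_k)=0$. The tool is splitting maps: on $\mathcal R_k'$ one has, at all sufficiently small scales at m-a.e.\ point, $\eta$-splitting maps $u\colon B_r(x)\to\mathbb R^k$ for arbitrary $\eta$. A standard covering argument combined with the $\varepsilon$-regularity/biLipschitz theorem for good splitting maps (as in Cheeger--Colding and its $\RCD$ adaptation) produces Borel subsets $D_j\subseteq\mathcal R_k$ covering m-a.e.\ of $\mathcal R_k$ on which $u$ is a biLipschitz chart. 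This yields $(\mass,k)$-rectifiability of $\mathcal R_k$, and the biLipschitz charts plus two-sided density bounds $0<\Theta_*^k\le\Theta^{*k}<\infty$ (again from Bishop--Gromov and volume convergence) deliver $\mass\mres\mathcal R_k\ll\mathcal H^k\mres\mathcal R_k$.

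The main obstacle is the last point: \emph{uniqueness} of $n$, i.e.\ showing that if $\mass(\mathcal R_h),\mass(\mathcal R_k)>0$ with $h<k$, then a contradiction ensues. Here I would follow the strategy of Bru\`e--Semola: pick Lebesgue density points $x_h\in\mathcal R_h$, $x_k\in\mathcal R_k$, take a good $\eta$-splitting map $u\colon B_R(x_k)\to\mathbb R^k$, and propagate the existence of $k$ independent ``almost-harmonic coordinates'' along a good geodesic or, more robustly, across a large portion of the space using the improved Bochner/Bakry--\'Emery estimates and the fact that the regularity of $u$ can be transported by the heat flow. The key input is a quantitative stability result saying that the stratum $\mathcal R_k$ is, in a suitable sense, both open and closed in a measurable sense modulo m-null sets; concretely one shows that at $x_h$ the existence of $k$ approximately orthonormal harmonic coordinates forces $\mathrm{Tan}_{x_h}$ to contain $\mathbb R^k$, contradicting $x_h\in\mathcal R_h$ with $h<k$. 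This is the deepest step and genuinely uses the second-order calculus on $\RCD$ spaces. Once uniqueness of $n$ is proved, setting $\mathcal R_n$ as the full-measure stratum gives the theorem.
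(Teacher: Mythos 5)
The paper does not prove this statement: it is imported verbatim from the literature, with the proof distributed across \cite{Mondino-Naber14,GP16-2,DPMR16,KelMon16,bru2018constancy}. So there is no internal argument to compare against; what you have written is a roadmap of those references rather than a proof, and as a roadmap it is broadly accurate in parts (i) and (ii): iterated tangents plus the splitting theorem give Euclidean tangents $\mass$-a.e., and the maximal-function/almost-splitting machinery of Mondino--Naber yields the bi-Lipschitz charts and hence $(\mass,k)$-rectifiability of each stratum.

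There are, however, two places where the sketch undersells genuine difficulties. First, your step (iii) claims that the bi-Lipschitz charts together with ``two-sided density bounds from Bishop--Gromov and volume convergence'' deliver $\mass\ll\mathcal H^k$. Bishop--Gromov controls ratios $\mass(B_R(x))/\mass(B_r(x))$, not the quantity $\mass(B_r(x))/r^k$; the finiteness of the upper $k$-density at $\mass$-a.e.\ point is exactly the hard content here, and rectifiability alone does not exclude that $\mass$ charges an $\mathcal H^k$-null set. This is why the absolute continuity required three independent and substantially later works (optimal-transport arguments in \cite{KelMon16}, Alberti representations and the converse-Rademacher theorem in \cite{DPMR16}, the analysis of the reference measure under charts in \cite{GP16-2}); none of these reduces to a density comparison. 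Second, for the constancy of dimension your description (``propagate almost-harmonic coordinates along a good geodesic\,\dots\,transported by the heat flow'') gestures at the right circle of ideas but is not the actual mechanism of \cite{bru2018constancy}, which runs through the Lusin--Lipschitz regularity of Regular Lagrangian Flows of Sobolev vector fields (Crippa--De Lellis-type estimates powered by the improved Bochner inequality), used to transport the rectifiability dimension from one stratum to another. As you acknowledge, this is the deepest step, and your outline does not yet contain an argument for it. In the context of the present paper the correct move is simply to cite these results, as the authors do.
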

Recall that \(\mathcal R_n\) is said to be \emph{\((\mass,n)\)-rectifiable} provided there exist Borel subsets \((A_i)_{i\in\mathbb N}\) of
\(\mathcal R_n\) such that each \(A_i\) is bi-Lipschitz equivalent to a subset of \(\RR^n\) and \(\mass(\mathcal R_n\setminus\bigcup_i A_i)=0\).
\begin{defn}
Let \((\XX,\dist,\mass)\) be an \(\RCD(K,N)\) space having essential dimension \(n\). Then we define the set
\(\mathcal R_n^*=\mathcal R_n^*(\XX)\subseteq\mathcal R_n\) as
\[
\mathcal R_n^*\coloneqq\left\{x\in\mathcal R_n\;\bigg|\;\exists\Theta_n(\mass,x)\coloneqq\lim_{r\to 0}\frac{\mass(B_r(x))}{r^n}\in(0,+\infty)\right\}.
\]
\end{defn}
Notice that the set \(\mathcal R_n^*\) is Borel, see \cite[Remark 2.5]{ABPrank}. As shown in \cite[Theorem 4.1]{AHT17}, it holds that \(\mass(\XX\setminus\mathcal R_n^*)=0\).
\subsubsection{Structure results for sets of finite perimeter in $\RCD$ spaces}
Let us now recall some structure results for sets of finite perimeter in $\RCD(K,N)$ spaces. We assume the reader to be familiar with \cite{ambrosio2018rigidity, bru2019rectifiability,bru2021constancy,BGBV}.
\begin{defn}[Tangents to a set of finite perimeter]
Let \((\XX,\dist,\mass,p)\) be a pointed \(\RCD(K,N)\) space, \(E\subseteq\XX\) a set of locally finite perimeter. Then
we define \({\rm Tan}_p(\XX,\dist,\mass,E)\) as the family of all quintuplets \((\YY,\dist_\YY,\mass_\YY,q,F)\) that verify the following two conditions:
\begin{enumerate}[label=\roman*)]
\item \((\YY,\dist_\YY,\mass_\YY,q)\in{\rm Tan}_p(\XX,\dist,\mass)\),
\item \(F\subseteq\YY\) is a set of locally finite perimeter with \(\mass_\YY(F)>0\) for which the following property holds:
along a sequence \(r_i\searrow 0\) such that \((\XX,r_i^{-1}\dist,\mass_p^{r_i},p)\to(\YY,\dist_\YY,\mass_\YY,q)\) in the pmGH sense, with
realization \(\ZZ\), it holds that \(E^i\to F\) in \(L^1_{\rm loc}\) (cf.\! with \cite[Definition 3.1]{ambrosio2018rigidity}), where \(E^i\) is intended in the rescaled space \((\XX,r_i^{-1}\dist)\). If this is the case, we write
$$
(\XX,r_i^{-1}\dist,\mass_p^{r_i},p,E)\to(\YY,\dist_\YY,\mass_\YY,q,F).
$$
\end{enumerate}
\end{defn}
\begin{defn}[Reduced boundary]\label{def:ReducedBoundary}
Let \((\XX,\dist,\mass)\) be an \(\RCD(K,N)\) space. Let \(E\subseteq\XX\) be a set of locally finite perimeter.
Then we define the \emph{reduced boundary} \(\mathcal F E\subseteq\partial^* E\) of \(E\) as the set of all those
points \(x\in \mathcal R_n^*\) satisfying all the four conclusions of \Cref{zuppa} and such that 
\begin{equation}\label{eqn:FnE}
\mathrm{Tan}_x(\XX,\dist,\mass,E)=\big\{(\RR^n,\dist_e,\underline{\mathcal L}^n,0,\{x_n>0\})\big\},
\end{equation}
where \(n\in\mathbb N\), \(n\leq N\) stands for the essential dimension of \((\XX,\dist,\mass)\). We recall that the set of all points $x\in\XX$ that satisfy \eqref{eqn:FnE} was denoted by $\mathcal{F}_nE$ in \cite{ambrosio2018rigidity}. 
\end{defn}
We recall here, for the reader's convenience, \cite[Remark 2.22]{ABPrank}.
\begin{rem}\label{rem:properties_FE}
	By the proof of \cite[Corollary 4.10]{ambrosio2018rigidity}, by \cite[Corollary 3.4]{ambrosio2018rigidity}, and by the membership to $\mathcal R_n^*$,
	we see that for any $x\in\FF E$ the following hold.
	\begin{enumerate}[label=\roman*)]
		\item If $r_i\searrow 0$ is such that \begin{equation}\label{convsemplice}(\XX, r_i^{-1}\dist,\mass_x^{r_i},x)\rightarrow (\RR^n,\dist_e, \underline{\mathcal L}^n,0)\end{equation}
		in a realization $(\ZZ,\dist_\ZZ)$, then, up to not relabelled subsequences and a change of coordinates in $\RR^n$,
		$$(\XX, r_i^{-1}\dist,\mass_x^{r_i},x,E)\rightarrow (\RR^n,\dist_e, \underline{\mathcal L}^n,0,\{x_n>0\}),$$
		in the same realization $(\ZZ,\dist_\ZZ)$. Notice that, given a sequence $r_i\searrow 0$, it is always possible to find a subsequence satisfying \eqref{convsemplice}.
		\item\label{conv}  If $r_i\searrow 0$ is such that $$(\XX, r_i^{-1}\dist,\mass_x^{r_i},x,E)\rightarrow (\RR^n,\dist_e, \underline{\mathcal L}^n,0,\{x_n>0\})$$ in a realization $(\ZZ,\dist_\ZZ)$, then $|\DIFF\nchi_E|$ weakly converges to $|\DIFF\nchi_{\{x_n>0\}}|$ in duality with $C_{\mathrm{bs}}(Z)$.
		\item We have
		\begin{equation}\label{usefullimits}
			\begin{split}
				&\lim_{r\searrow 0}\frac{\mass(B_r(x))}{r^n}=\omega_n\Theta_n(\mass,x)\in (0,+\infty),\\
				&\lim_{r\searrow 0}\frac{|\DIFF\nchi_E|(B_r(x))}{r^{n-1}}={\omega_{n-1}}\Theta_n(\mass,x).
			\end{split}
		\end{equation}
	\end{enumerate}\fr
	
\end{rem}
We now recall \cite[Theorem 3.2]{ABPrank}, which follows along the techniques of \cite{bru2021constancy} and builds upon \cite{deng2020holder}. 
\begin{thm}\label{thm:const_dim_cod1}
Let \((\XX,\dist,\mass)\) be an \(\RCD(K,N)\) space having essential dimension \(n\). Then
\[
|\DIFF f|(\XX\setminus\mathcal R_n^*)=0,\quad\text{ for every }f\in\BV_{\mathrm{loc}}(\XX).
\]
\end{thm}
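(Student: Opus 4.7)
The plan is a two-step reduction. First, by the coarea formula (\Cref{coarea}) applied to the Borel set $\XX\setminus\mathcal R_n^*$,
\[
|\DIFF f|(\XX\setminus\mathcal R_n^*)=\int_{\RR}|\DIFF\nchi_{\{f>t\}}|(\XX\setminus\mathcal R_n^*)\,\dd t,
\]
so the statement reduces to the case of an indicator function: it suffices to prove that $|\DIFF\nchi_E|(\XX\setminus\mathcal R_n^*)=0$ for every set $E\subseteq\XX$ of locally finite perimeter. Second, since the reduced boundary $\mathcal F E$ of \Cref{def:ReducedBoundary} is contained in $\mathcal R_n^*$ by construction, the remaining task is to show that $|\DIFF\nchi_E|$ is concentrated on $\mathcal F E$, i.e.\ that $|\DIFF\nchi_E|$-a.e.\ $x$ lies in $\mathcal F E$.

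To establish this concentration, I would combine \Cref{zuppa}, which guarantees the four regularity properties at $|\DIFF\nchi_E|$-a.e.\ $x$, with the rigidity and rectifiability results of \cite{ambrosio2018rigidity, bru2019rectifiability}. These already force every element of $\mathrm{Tan}_x(\XX,\dist,\mass,E)$, at $|\DIFF\nchi_E|$-a.e.\ $x$, to have the form $(\RR^{k(x)},\dist_e,c\underline{\mathcal L}^{k(x)},0,\{x_{k(x)}>0\})$ for some integer $1\leq k(x)\leq n$ and some constant $c=c(x)>0$: a half-space tangent in a possibly varying dimension bounded by the essential dimension. It then remains to upgrade this to the sharp value $k(x)=n$ for $|\DIFF\nchi_E|$-a.e.\ $x$. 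Once this is in place, \Cref{def:ReducedBoundary} gives $x\in\mathcal F E\subseteq\mathcal R_n^*$ and the proof is complete.

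The main obstacle is precisely this dimensional upgrade, $k(x)\leq n\rightsquigarrow k(x)=n$: a priori the perimeter could accumulate on a lower-dimensional stratum of $\XX$, and the sole rigidity results of \cite{ambrosio2018rigidity, bru2019rectifiability} do not exclude this possibility. Ruling it out is exactly the content of the constancy-of-dimension theorem for the perimeter proved in \cite{bru2021constancy}, whose argument propagates regularity along the perimeter measure using in an essential way the Hölder stability of the regular strata of an $\RCD(K,N)$ space established in \cite{deng2020holder}. Granted that deep ingredient, everything else in the plan is essentially a rearrangement via coarea and the definition of the reduced boundary.
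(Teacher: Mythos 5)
The paper gives no proof of this statement—it is quoted verbatim as \cite[Theorem 3.2]{ABPrank}—and your outline correctly reconstructs the strategy behind that reference: a coarea reduction to a single set of locally finite perimeter, followed by concentration of $|\DIFF\nchi_E|$ on $\mathcal F E\subseteq\mathcal R_n^*$, with the one genuinely deep step being the dimensional upgrade $k(x)=n$ supplied by the constancy-of-dimension theorem of \cite{bru2021constancy} building on \cite{deng2020holder}. The only point you pass over quickly is that membership in $\mathcal R_n^*$ (existence of the volume density $\Theta_n(\mass,x)$), and not merely in $\mathcal R_n$, must also be verified at $|\DIFF\nchi_E|$-a.e.\ point, but this too is contained in the rigidity/rectifiability results you invoke, so the proposal is sound and follows essentially the same route as the cited proof.
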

The following is \cite[Theorem 3.3]{ABPrank}.
\begin{thm}[Representation formula for the perimeter]\label{thm:repr_per}
Let \((\XX,\dist,\mass)\) be an \(\RCD(K,N)\) space having essential dimension \(n\). Let \(E\subseteq\XX\) be a set of locally finite perimeter. Then
\begin{equation}\label{eq:repr_per}
|\DIFF\nchi_E|=\Theta_n(\mass,\cdot)\HH^{n-1}\mres\mathcal F E.
\end{equation}
In particular, it holds that \(\Theta_{n-1}(|\DIFF\nchi_E|,x)=\Theta_n(\mass,x)\) for \(\HH^{n-1}\)-a.e. \(x\in\mathcal{F} E\).
\end{thm}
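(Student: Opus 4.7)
The plan is to combine three ingredients: the concentration of $|\DIFF\nchi_E|$ on the reduced boundary $\mathcal{F}E$, the two density asymptotics collected in \Cref{rem:properties_FE}, and a Radon--Nikodym differentiation argument that is valid because $|\DIFF\nchi_E|$ is asymptotically doubling. Since \Cref{def:ReducedBoundary} packages \Cref{zuppa} and the half-space blow-up condition \eqref{eqn:FnE} into the definition of $\mathcal F E$, the task reduces to showing (a) that $|\DIFF\nchi_E|$ is concentrated on $\mathcal F E$, and (b) that on $\mathcal F E$ the measure $|\DIFF\nchi_E|$ has density $\Theta_n(\mass,\cdot)$ with respect to $\mathcal H^{n-1}\mres \mathcal F E$.

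For step (a) I would first apply \Cref{thm:const_dim_cod1} to restrict to $\mathcal R_n^*$, and then argue that $|\DIFF\nchi_E|$-a.e.\ point of $\mathcal R_n^*$ satisfies \eqref{eqn:FnE}. The iterative tangent procedure from \cite{ambrosio2018rigidity} shows that for $|\DIFF\nchi_E|$-a.e.\ $x$, every element of $\mathrm{Tan}_x(\XX,\dist,\mass,E)$ is a half-space $(\RR^k,\dist_e,\underline{\mathcal L}^k,0,\{x_k>0\})$ for some $k\leq N$. The constancy-of-dimension result of \cite{bru2021constancy} then forces $k$ to coincide with the essential dimension $n$, and hence the tangent is uniquely $(\RR^n,\dist_e,\underline{\mathcal L}^n,0,\{x_n>0\})$. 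The properties in \Cref{zuppa} hold for $|\DIFF\nchi_E|$-a.e.\ $x$ by definition, so together these facts give $|\DIFF\nchi_E|(\XX\setminus\mathcal F E)=0$.

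For step (b) I would use both limits in \eqref{usefullimits}: for every $x\in\mathcal F E$,
\[
\lim_{r\searrow 0}\frac{|\DIFF\nchi_E|(B_r(x))}{r^{n-1}}=\omega_{n-1}\Theta_n(\mass,x).
\]
The rectifiability of $\mathcal F E$ (as a consequence of \cite{bru2019rectifiability}) implies that, for $\mathcal H^{n-1}$-a.e.\ $x\in\mathcal F E$,
\[
\lim_{r\searrow 0}\frac{\mathcal H^{n-1}(B_r(x)\cap \mathcal F E)}{\omega_{n-1}r^{n-1}}=1.
\]
Dividing these two limits yields
\[
\lim_{r\searrow 0}\frac{|\DIFF\nchi_E|(B_r(x))}{\mathcal H^{n-1}(B_r(x)\cap \mathcal F E)}=\Theta_n(\mass,x).
\]
Since $|\DIFF\nchi_E|$ is asymptotically doubling by \Cref{zuppa}, the differentiation theorem for Radon measures in asymptotically doubling spaces applies, and together with step (a) produces the identity \eqref{eq:repr_per}. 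The \emph{in particular} claim is then immediate: plugging the representation into the definition of $\Theta_{n-1}(|\DIFF\nchi_E|,x)$ and using the rectifiability density of $\mathcal F E$ once more yields $\Theta_{n-1}(|\DIFF\nchi_E|,x)=\Theta_n(\mass,x)$ for $\mathcal H^{n-1}$-a.e.\ $x\in\mathcal F E$.

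The main obstacle is step (a): a priori a given set of finite perimeter could exhibit tangent half-spaces of different dimensions at different boundary points, and ruling this out is precisely the delicate constancy-of-dimension statement of \cite{bru2021constancy} (itself relying on the H\"older continuity estimates of \cite{deng2020holder}). Once that input is granted, the rest is a clean comparison of densities and an application of standard measure-theoretic machinery available in PI spaces.
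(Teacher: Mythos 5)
The paper offers no proof of this theorem: it is imported verbatim as \cite[Theorem 3.3]{ABPrank}, so there is no internal argument to compare against, but your sketch is a faithful reconstruction of the proof given there and in its sources \cite{ambrosio2018rigidity,bru2019rectifiability,bru2021constancy} (iterated blow-up to get half-space tangents, constancy of the dimension to fix $k=n$, then the density computation \eqref{usefullimits} against the $\HH^{n-1}$-density of the rectifiable set $\mathcal F E$). The one step to tighten is the final differentiation: before you can ``divide the two limits'' and invoke the differentiation theorem for the asymptotically doubling measure $|\DIFF\nchi_E|$, you need $\HH^{n-1}\mres\mathcal F E$ to be locally finite and mutually absolutely continuous with $|\DIFF\nchi_E|$ on $\mathcal F E$ — otherwise a limit established $\HH^{n-1}$-a.e.\ cannot be upgraded to $|\DIFF\nchi_E|$-a.e.\ and vice versa; this is supplied by the two-sided density bounds in \eqref{usefullimits} together with the standard density-comparison lemma (e.g.\ \cite[Theorem 2.4.3]{AmbrosioTilli04}), after which your argument closes as stated.
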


\subsubsection{Good coordinates and good splitting maps}
In this section we recall the notion of good coordinates studied in \cite{bru2021constancy} and good splitting maps introduced in \cite{ABPrank}. 
First, we recall the following, which comes from \cite[Theorem 3.13]{BGBV}, see also \cite[Theorem 2.4]{bru2019rectifiability}. We assume the reader to be familiar with the Sobolev calculus on RCD spaces and with the notion of capacitary tangent module $L^0_{\mathrm{Cap}}(TX)$. We refer to \cite[Section 2.2.1]{ABPrank} and references therein.
\begin{thm}\label{thm:DivergenceFormula}
Let $(\XX,\dist,\mass)$ be an $\RCD(K,N)$ space and let $f\in \BV(\XX)$. Then there exists a unique, up to $|\DIFF f|$-a.e.\ equality, element $\nu_f\in L^0_\capa(T\XX)$ such that $|\nu_f|=1\ |\DIFF f|$-a.e.\ and 
    $$
    \int_\XX f{\rm div}(v)\,\dd{\mass}=-\int_\XX \pi_{|\DIFF f|}(v)\cdot\nu_f\,\dd{|\DIFF f|},\quad\text{ for every $v\in\TestV(\XX)$.}
    $$
\end{thm}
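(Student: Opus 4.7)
The plan is to realize $\nu_f$ via a Riesz-type representation of the divergence functional
\[ L_f\colon\TestV(\XX)\to\RR,\qquad L_f(v)\defeq -\int_\XX f\,\mathrm{div}(v)\,\dd\mass. \]

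\textbf{Step 1 (Bound).} The first thing I would establish is the pointwise estimate $|L_f(v)|\le\int_\XX |v|^\sim\,\dd|\DIFF f|$ for every $v\in\TestV(\XX)$, where $|v|^\sim$ denotes the quasi-continuous representative of $|v|$ furnished by the capacitary module theory of \cite{debin2019quasicontinuous}. I would prove this by approximating $f$ in $L^1_{\mathrm{loc}}(\mass)$ by locally Lipschitz functions $(f_k)$ with $\int\mathrm{lip}(f_k)\,\dd\mass\to|\DIFF f|(\XX)$, using the integration by parts $L_{f_k}(v)=\int\nabla f_k\cdot v\,\dd\mass$ valid for Lipschitz $f_k$, and passing to the limit through a Reshetnyak-type lower semicontinuity argument. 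This is precisely where the quasi-continuous selection enters, since the pairing between $v$ and the (possibly singular) measure $|\DIFF f|$ must be made canonical.

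\textbf{Step 2 (Representation).} With the bound in hand, $L_f$ descends to a bounded linear functional on the subspace $\{\pi_{|\DIFF f|}(v):v\in\TestV(\XX)\}$ seen inside the Hilbert space $L^2(|\DIFF f|;T\XX)$. Density of this subspace in the full $L^2(|\DIFF f|;T\XX)$ is a structural property of the capacitary tangent module that is discussed in \cite{debin2019quasicontinuous,BGBV}. A standard Riesz representation then yields an element $\nu_f\in L^0_\capa(T\XX)$ with $|\nu_f|\le 1$ $|\DIFF f|$-a.e.\ and satisfying the desired identity for every $v\in\TestV(\XX)$.

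\textbf{Step 3 (Norm saturation and uniqueness).} To upgrade $|\nu_f|\le 1$ to equality, I would invoke the dual characterization
\[ |\DIFF f|(A)=\sup\big\{L_f(v):v\in\TestV(\XX),\ |v|\le 1,\ \mathrm{supp}(v)\subseteq A\big\} \]
(standard for $\BV$ functions on PI spaces). Indeed, the representation formula gives $L_f(v)\le\int_A|v|^\sim|\nu_f|\,\dd|\DIFF f|\le\int_A|\nu_f|\,\dd|\DIFF f|$, whence the above supremum is bounded above by $\int_A|\nu_f|\,\dd|\DIFF f|$. Comparing with $|\DIFF f|(A)$ forces $|\nu_f|=1$ $|\DIFF f|$-a.e. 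Uniqueness up to $|\DIFF f|$-a.e.\ equality is then immediate from the density established in Step 2.

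\textbf{Main obstacle.} The principal difficulty is making sense of the integrand $\pi_{|\DIFF f|}(v)\cdot\nu_f$: the vector field $v\in\TestV(\XX)$ is a priori defined only up to $\mass$-a.e.\ equality, whereas $|\DIFF f|$ may be singular with respect to $\mass$. The whole point of the capacitary tangent module $L^0_\capa(T\XX)$ of \cite{debin2019quasicontinuous} is to bypass this issue by supplying canonical quasi-continuous representatives $\pi_{|\DIFF f|}(v)$ of test vector fields and thus giving meaning to their pairing against singular measures. Once this technology is in place, the remainder of the argument reduces to standard Hilbert-space duality and the approximation of $\BV$ functions by Lipschitz ones.
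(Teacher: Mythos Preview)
The paper does not prove this statement: it is quoted as a known result from \cite[Theorem 3.13]{BGBV} (with the antecedent \cite[Theorem 2.4]{bru2019rectifiability}) and placed in the preliminaries. There is therefore no in-paper proof to compare your proposal against.

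For what it is worth, your outline is a faithful sketch of the strategy in the cited references. Two places deserve more care if you intend to fill in details. In Step~1, convergence of total masses $\int\lip(f_k)\,\dd\mass\to|\DIFF f|(\XX)$ is not by itself enough to pass to the limit in $\int|v|\,\lip(f_k)\,\dd\mass$; you need an optimal approximating sequence with $\lip(f_k)\mass\rightharpoonup|\DIFF f|$ weakly, together with the (separately proved) fact that $|\DIFF f|\ll\capa$, so that the quasi-continuous representative $|v|^\sim$ is genuinely $|\DIFF f|$-a.e.\ defined and the pairing is stable under the limit. In Step~3, the dual characterization of $|\DIFF f|(A)$ via divergences of test vector fields is itself a nontrivial theorem in this setting (essentially Di Marino's result), not a routine identity; the density claim in Step~2 likewise relies on the structural results for the capacitary module in \cite{debin2019quasicontinuous,BGBV}. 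With these ingredients granted, your plan is correct.
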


In particular, if $E$ is a set of locally finite perimeter, we naturally have a unique, up to $|\DIFF \nchi_E|$-a.e.\ equality, element $\nu_E\in L^0_\capa (T\XX)$, where we understand $\nu_E=\nu_{\nchi_E}$ by locality.
\begin{defn}[Good coordinates]\label{defn:good_coordinates}
	Let \((\XX,\dist,\mass)\) be an \(\RCD(K,N)\) space of essential dimension \(n\). Let \(E\subseteq\XX\) be a set of locally finite perimeter
	and let \(y\in\mathcal F E\) be given. Then we say that an \(n\)-tuple \(u=(u^1,\ldots,u^n)\) of harmonic functions \(u^i\colon B_{r_y}(y)\to\RR\)
	is a \emph{system of good coordinates} for \(E\) at \(y\) provided the following properties are satisfied:
\begin{enumerate}[label=\roman*)]
		\item For any \(i,j=1,\ldots,n\), it holds that
		\begin{equation}\label{eqn:Eq1Good}
		\lim_{r\searrow 0}\fint_{B_r(y)}|\nabla u^i\cdot\nabla u^j-\delta_{ij}|\,\diff\mass=
		\lim_{r\searrow 0}\fint_{B_r(y)}|\nabla u^i\cdot\nabla u^j-\delta_{ij}|\,\diff|\DIFF\nchi_E|=0.
		\end{equation}
		\item For any \(i=1,\ldots,n\), it holds that
		\begin{equation}\label{eq:def_nu_in_good_coord}
			\exists\,\nu_i(y)\coloneqq\lim_{r\searrow 0}\fint_{B_r(y)}\nu_E\cdot\nabla u^i\,\diff|\DIFF\nchi_E|,
			\qquad\lim_{r\searrow 0}\fint_{B_r(y)}|\nu_i(y)-\nu_E\cdot\nabla u^i|\,\diff|\DIFF\nchi_E|=0.
		\end{equation}
		\item The resulting vector \(\nu(y)\coloneqq(\nu_1(y),\ldots,\nu_n(y))\in\RR^n\) satisfies \(|\nu(y)|=1\).
\end{enumerate}
\end{defn}
It follows from \cite[Proposition 3.6]{bru2021constancy} that good coordinates exist at \(|\DIFF\nchi_E|\)-a.e.\ \(y\in\mathcal F E\).

\begin{rem}\label{convdelta}
	Let $(\XX,\dist,\mass)$ be an $\RCD(K,N)$ space of essential dimension $n$, let $x\in\XX$ and let $u=(u^1,\dots,u^n)$ be an $n$-tuple of harmonic functions satisfying 
	$$
	\lim_{r\searrow 0}\fint_{B_r(x)}|\nabla u^i\cdot\nabla u^j-\delta_{ij}|\,\diff\mass=0.
	$$
	Given a sequence of radii $r_i\searrow 0$ such that $$(\XX,r_i^{-1}\dist,\mass_x^{r_i},x)\rightarrow(\RR^n,\dist_e,\underline{\mathcal L}^n,0)$$ and fixed a realization of such convergence, it follows from the results recalled in \cite[Section 1.2.3]{bru2019rectifiability} (see the references therein, see also \cite[(1.22)]{bru2019rectifiability},
	consequence of the improved Bochner inequality in \cite{Han14}) that, up to extracting a not relabelled subsequence, the functions in
	$$\{r_i^{-1} (u^j-u^j(x))\}_i\qquad\text{for }j=1,\dots,n $$
	converge locally uniformly to orthogonal coordinate functions of $\RR^n$.\fr
\end{rem}

Let us now recall the notion of $\delta$-splitting map. We follow the presentation in \cite[Definition 3.4]{bru2019rectifiability}, see also \cite[Section 2.2.3]{ABPrank}.
\begin{defn}[Splitting map]\label{def:Splitting}
Let \((\XX,\dist,\mass)\) be an \(\RCD(K,N)\) space. Let \(y\in\XX\), \(k\in\mathbb N\), and \(r_y,\delta>0\) be given.
Then a map \(u=(u_1,\ldots,u_k)\colon B_{r_y}(y)\to\RR^k\) is a \emph{\(\delta\)-splitting map} if
the following three properties hold:
\begin{enumerate}[label=\roman*)]
\item \(u_i\) is harmonic, meaning that, for every $i=1,\dots,k$, $u_i\in D(\Delta,B_{r_y}(y))$ and $\Delta u_i=0$; and moreover $u_i$ is $C_{K,N}$-Lipschitz for every $i=1,\dots,k$,
\item \(r_y^2\dashint_{B_{r_y}(y)}|{\rm Hess}(u_i)|^2\,\dd\mass\leq\delta\) for every \(i=1,\ldots,k\),
\item \(\dashint_{B_{r_y}(y)}|\nabla u_i\cdot\nabla u_j-\delta_{ij}|\,\dd\mass\leq\delta\) for every \(i,j=1,\ldots,k\).
\end{enumerate}
\end{defn}

For what follows, recall the definition of good splitting map, compare with \cite[Definition 2.28]{ABPrank} and \Cref{goodsplitting}.

\begin{rem}
Let $u$ be a good splitting map on $D\subseteq B_{r_y}(y)$. Due to \cite[Remark 2.10]{bru2021constancy}, for every $x\in B_{r_y}(y)$ there exists a Borel matrix $M(x)=\{M(x)_{i,j}\}_{i,j=1,\dots,n}\in\RR^{n\times n}$ satisfying 
	\begin{equation}\label{Mmatrix}
		\lim_{s\searrow 0}\dashint_{B_s(x)}|\nabla u^i\,\cdot\,\nabla u^j-M(x)_{i,j}|\,\dd\mass=0\qquad\text{for every }i,j=1,\dots,n.
	\end{equation}
Then, from item iii) of \Cref{def:Splitting} and since $\eta<n^{-1}$, we have that for every $x\in D$
\begin{equation}\label{cdskcee}
    |M(x)_{i,j}-\delta_{ij}|\leq \eta < n^{-1}.
\end{equation}
Hence, by applying the Gram--Schmidt orthogonalization algorithm to $\{\nabla u^i(x)\}_{i=1,\dots,n}$ for every $x\in D$, we find a matrix-valued function $A\in L^{\infty}(D;\mathbb R^{n\times n})$ such that, for every $x\in D$,
\begin{equation}\label{Amatrix}
	A(x) M(x)A(x)^T=\rm Id.\
\end{equation}
The membership $A\in L^{\infty}(D;\mathbb R^{n\times n})$ is due to \eqref{cdskcee}.
\fr
\end{rem}

Let  $(\XX,\dist,\mass)$ be an $\RCD(K,N)$ space of essential dimension $n$, and let $f\in\BV(\XX)$.
Recall that \cite[Theorem 5.1]{Ambrosio-Pinamonti-Speight15} and its proof (compare with \cite[Proposition 4.2]{ambmirpal04} and \cite[Proposition 2.13]{ABPrank}) yield that $\GG_f$ has locally finite perimeter and 
\begin{equation}\label{diffandper}
	|\DIFF f|\le  \pi_*^1|\DIFF\nchi_{\GG_f}|\le|\DIFF f|+\mass.
\end{equation}
Also, by \cite[Theorem 3.3]{ABPrank}, it holds that 
\begin{equation}\label{PerHn}
 \HH^{n}\mres \partial^*\GG_f\ll |\DIFF\nchi_{\GG_f}|,
\end{equation}
so that, taking into account \Cref{lem:char_G_f} and \Cref{thm:Structure}, we see that 
\begin{equation}\label{massandpi}
	\mass\ll\HH^n\mres \mathcal R_n\ll\pi_*^1|\DIFF\nchi_{\GG_f}|.
\end{equation}

Before going on, we stress a couple of remarks.
\begin{rem}
	 Taking into account \Cref{def:Normals}, it holds that $\nu_{\GG_f}^u$ is well-defined at $(x,\bar f(x))$ for $|\DIFF f|$-a.e.\ $x\in D\setminus J_f$ and $\mass$-a.e.\ $x\in D\setminus J_f$, as a consequence of \eqref{diffandper} and \eqref{massandpi}, respectively, together with \Cref{lem:char_G_f}.
\fr
\end{rem}

\begin{rem}\label{remarkfrequenteAM}
We isolate here an argument which will frequently appear during the paper, and that is essentially contained in \cite[Proposition 3.6]{bru2021constancy}. Let $(\XX,\dist,\mass)$ be an $\RCD(K,N)$ space of essential dimension $n$, and let $E\subseteq X$ be a set of locally finite perimeter.

Let $u: B_{2r_y}(y)\to\mathbb R^n$ be a good splitting map on $D\subseteq B_{r_y}(y)$. We claim that, for $|\DIFF \nchi_E|$-almost every point $x\in \mathcal{F}E\cap D$, the function $v:=A(x)u:B_{2r_{y}}(y)\to\mathbb R^n$ is a system of good coordinates for $E$ at $x$, where the matrix-valued function $A$ is defined as in \eqref{Amatrix}. In addition, if $\nu_E^u:B_{2r_y}(y)\to \mathbb R^n$ is the $|\DIFF\nchi_E|$-measurable map 
\[
\nu_E^u(x):=((\nu_E\cdot\nabla u^1)(x),\ldots,(\nu_E\cdot\nabla u^n)(x)),
\]
then the normal $\nu_E^v$ associated to the system of good coordinates $v$ for $E$ at $x$ (see item ii) of \Cref{defn:good_coordinates}) is 
\[
\nu_E^v=A(x)\nu_E^u.
\]

Indeed, let us fix $x\in \mathcal{F}E\cap D$ that is, for every $i,j=1,\dots,n$, a Lebesgue point of all the functions $\nabla u^i \cdot \nabla u^j$, $\nu_E\cdot A\nabla u^i$, $\nu_E\cdot \nabla u^i$, and $A$, with respect to the asymptotically doubling measure $|\DIFF\nchi_E|$. Let us denote $v^i:=A(x) u^i$. We aim at showing that $(v^i)_{i=1,\dots,n}:B_{2r_y}(y)\to\mathbb R^n$ are good coordinates for $E$ at $x$.

First, $v^i$ are harmonic. Second, by the very definition of $A$ and $M$, see \eqref{Amatrix} and \eqref{Mmatrix}, and by the fact that $x$ is a Lebesgue point of $\nabla u^i \cdot \nabla u^j$ with respect to $|D\nchi_E|$, we have the two equalities in \eqref{eqn:Eq1Good} at $x$ with $v^i$. Third, by denoting $a_i(x)$ the Lebesgue value of $\nu_E\cdot A\nabla u^i$ at $x$ with respect to $|D\nchi_E|$, and since by definition $(A\nabla u^i)\cdot (A\nabla u^j)=\delta_{ij}$ everywhere on $D$, we conclude that $\nu_E=\sum_{i=1}^n a_i(A\nabla u^i)$ holds $|\DIFF \nchi_E|$-almost everywhere on $D$. Now, since $|\nu_E|=1$ in $L^2_E(T\XX)$, and since $(A\nabla u^i)$ are pointwise orthonormal on $D$, we conclude that the vector $(a_i)_{i=1,\dots,n}$ has norm 1. Hence, by finally taking into account that $x$ is also a Lebesgue point of $\nu_E\cdot\nabla u^i$ and $A$ with respect to $|\DIFF \nchi_E|$, \eqref{eq:def_nu_in_good_coord} and item ii) in \Cref{defn:good_coordinates} hold. How the normal transforms is clear from \eqref{eq:def_nu_in_good_coord}. Thus the claim is proved.
\fr
\end{rem}

\section{Main results}\label{sec:Main}

In this section we are going to prove the main results of this note, i.e., \Cref{thmmain1} and \Cref{thmmain2}. First, we start with some auxiliary results.
\subsection{Auxiliary results}
For this section we fix an $\RCD(K,N)$ space of essential dimension $n$ $(\XX,\dist,\mass)$ and $f\in\BV(\XX)$.
We fix also $u$, a good splitting map on $D\subseteq B_{r_x}(x)$ for some $x\in \XX$ and $r_x>0$.

The following proposition can be proved exactly as \cite[Proposition 3.6]{ABPrank}. 
Recall the definition of the reduced boundary in use in this note, see \Cref{def:ReducedBoundary}.
\begin{prop}\label{prop:def_set_C_f}
	In the setting above, there exists a Borel set \(D_f\subseteq D\) satisfying the following properties:	\begin{enumerate}[label=\roman*)]
		\item $|\DIFF f|^c(D\setminus D_f)=0$ and \(\mass(D\setminus D_f)=0\).
		\item $|\DIFF\nchi_{\GG_f}|((D\setminus (D_f\cup J_f))\times \RR)=0$.
		\item \(D_f\subseteq\mathcal R_n^*(\XX)\setminus J_f\) and \(\mathcal F\GG_f\cap(D_f\times\RR)
		=({\rm id}_\XX,\bar f)(D_f)\).
		\item Given any \(x\in D_f\), for
		\(A(x)\in\RR^{n\times n}\) as in \eqref{Amatrix}, we have that
		\((A(x) u,\pi^2)\) is a system of good coordinates for \(\GG_f\) at \((x,\bar f(x))\).
		\item If \(v=(v^1,\ldots,v^{n+1})\colon B_{r_x}(x,\bar f(x))\to\RR^{n+1}\) is a system of good coordinates for \(\GG_f\) at \((x,\bar f(x))\)
		for some \(x\in D_f\) and the coordinates \((x_i)\) on the (Euclidean) tangent space to \(\XX\times\RR\) at \((x,\bar f(x))\) are chosen so that the maps
		\((u^i)\) converge to \((x_i)\colon\RR^{n+1}\to\RR^{n+1}\) (when properly rescaled, see \Cref{convdelta}), then the blow-up of \(\GG_f\) at \((x,\bar f(x))\) can be written as
		\[
		H\coloneqq\big\{y\in\RR^{n+1}\;\big|\;y\cdot\nu(x,\bar f(x))\geq 0\big\},
		\]
		where the unit vector \(\nu(x,\bar f(x))\coloneqq\big(\nu_1(x,\bar f(x)),\ldots,\nu_{n+1}(x,\bar f(x))\big)\) is given by \eqref{eq:def_nu_in_good_coord} for $v$.
	\end{enumerate}
\end{prop}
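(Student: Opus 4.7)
The plan is to define $D_f$ by intersecting $D\cap(\mathcal R_n^*(\XX)\setminus J_f)$ with the Borel set of points that are Lebesgue---with respect to both $\mass$ and $\pi^1_*|\DIFF\nchi_{\GG_f}|$---of every function entering the construction: the entries of the matrices $M$ and $A$ from \eqref{Mmatrix}--\eqref{Amatrix}, the components $\nu_f\cdot\nabla u^i$ of the polar vector of $f$, and the components of $\nu_{\GG_f}$ against $\nabla u^i$ and $\nabla\pi^2$ (read at $(x,\bar f(x))$); I would additionally restrict to those $x$ for which $(x,\bar f(x))$ satisfies the four conclusions of \Cref{zuppa} applied to $\GG_f$. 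Each piece discarded from $D$ is simultaneously $\mass$-null, $|\DIFF f|^c$-null, and---after lifting through $({\rm id}_\XX,\bar f)$ and using \eqref{diffandper}--\eqref{massandpi}---also $|\DIFF\nchi_{\GG_f}|$-null, so (i) and (ii) will follow immediately. In particular, (ii) is obtained by writing
\[
|\DIFF\nchi_{\GG_f}|\bigl((D\setminus(D_f\cup J_f))\times\RR\bigr)=\pi^1_*|\DIFF\nchi_{\GG_f}|\bigl(D\setminus(D_f\cup J_f)\bigr)\leq(|\DIFF f|+\mass)\bigl(D\setminus(D_f\cup J_f)\bigr)
\]
and invoking the decomposition $|\DIFF f|=g_f\mass+|\DIFF f|^j+|\DIFF f|^c$ together with the fact that $|\DIFF f|^j$ is concentrated on $J_f$.

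For (iii), the inclusion $D_f\subseteq\mathcal R_n^*(\XX)\setminus J_f$ is built into the construction, and \Cref{lem:char_G_f} yields the containment $\mathcal F\GG_f\cap(D_f\times\RR)\subseteq({\rm id}_\XX,\bar f)(D_f)$ by reducing each $x$-slice of the essential boundary to the single point $(x,\bar f(x))$. The reverse containment---showing $(x,\bar f(x))\in\mathcal F\GG_f$ for every $x\in D_f$---is the heart of the argument, and I would establish it by reproducing verbatim the strategy of \cite[Proposition 3.6]{ABPrank} inside the product space $\XX\times\RR$, which is itself a finite-dimensional $\RCD$ space of essential dimension $n+1$.

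For (iv) and (v), the key step is to lift the splitting map to $(u,\pi^2)\colon B_{2r_x}(x)\times\RR\to\RR^{n+1}$ and to recognize it as a good $\eta$-splitting map on $D\times\RR$ in the product $\RCD$ space, since $\pi^2$ is harmonic, $1$-Lipschitz, with $|\nabla\pi^2|\equiv 1$ and $\nabla\pi^2\cdot\nabla u^i\equiv 0$. The associated matrix $M(x,\bar f(x))$ is then block-diagonal with $M(x)$ in the upper-left $n\times n$ block and a $1$ in the lower-right entry, so the orthogonalization matrix of \eqref{Amatrix} inherits the same block-diagonal form with $A(x)$ and $1$. Applying \Cref{remarkfrequenteAM} inside $\XX\times\RR$ with $E=\GG_f$ then produces the system of good coordinates $(A(x)u,\pi^2)$ at $(x,\bar f(x))$, proving (iv). Finally, (v) will be the standard blow-up at a reduced boundary point---the tangent is a halfspace in $\RR^{n+1}$---combined with \Cref{convdelta}, which identifies the rescaled coordinates $(A(x)u,\pi^2)$ with orthonormal Euclidean coordinates in the limit and thus allows one to read off the halfspace through the unit normal $\nu(x,\bar f(x))$ given by \eqref{eq:def_nu_in_good_coord}.

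The main obstacle I anticipate is the verification that $(u,\pi^2)$ genuinely inherits the good splitting map bounds in the product $\RCD$ space $\XX\times\RR$, which requires careful use of the tensorization of harmonicity, Lipschitz bounds, and Hessian estimates; the related task of transferring Lebesgue point conditions between $(\XX,\mass)$ and $(\XX\times\RR,|\DIFF\nchi_{\GG_f}|)$ through \eqref{diffandper} and \eqref{massandpi} is delicate but routine once those inequalities are in place.
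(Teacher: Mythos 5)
Your proposal is correct and follows essentially the same route as the paper, which likewise takes $D_f$ to be an intersection of Lebesgue-point/density conditions with (the projection under $\pi^1$ of) the reduced boundary $\mathcal F\GG_f$, and delegates the substance to \cite[Proposition 3.6]{ABPrank}, \eqref{diffandper}, \eqref{massandpi}, \Cref{lem:char_G_f} and \Cref{remarkfrequenteAM} applied in the product space. The one caveat is that the membership $(x,\bar f(x))\in\mathcal F\GG_f$ has to be built directly into the definition of $D_f$ (as the paper does by intersecting with $\pi^1(\cdots\cap\mathcal F\GG_f)$) rather than deduced pointwise from your Lebesgue-point and \Cref{zuppa} conditions, which do not by themselves force the half-space tangent; since you explicitly defer exactly this step to the strategy of the cited reference, the argument matches the paper's.
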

\begin{proof}
	The proof is \cite[Proposition 3.6]{ABPrank}, up to the fact that we replace the definition in \cite[(3.11)]{ABPrank} with
	\begin{equation}\notag
		D_f\coloneqq \XX_f\cap\left(\mathcal{R}_n^*(\XX)\setminus J_f\right)\cap D
		\cap \pi^1(\mathcal A\cap \mathcal{T}\cap\mathcal{S}\cap\mathcal{F}\mathcal{G}_f)\cap\mathcal{D},
	\end{equation}
where we kept the notation as in the proof of \cite[Proposition 3.6]{ABPrank}.
Item i) is obtained recalling \eqref{diffandper} and \eqref{massandpi}.  Item ii)  follows from the definition of $D_f$ again together with \eqref{diffandper}.
The other items are proved exactly as in the reference. In particular, for item iv), see \Cref{remarkfrequenteAM}.
\end{proof}
Even though by definition $D_f\cap J_f=\varnothing$, we sometimes consider $D_f\setminus J_f$ to remind this fact.
In our proofs, we will implicitly take as a representative of $\nu_{\GG_f}^u$ (see \Cref{def:Normals}) its Lebesgue representative with respect to the asymptotically doubling measure $|\DIFF\nchi_{\GG_f}|$.  This will not make any difference in the end, due to the nature of the statements, but will allow us to exploit item v) of \Cref{prop:def_set_C_f}.
\begin{lem}\label{normzero}
Set 
$$ \hat{C}_f\defeq\left\{x\in D: \big(\nu_{\GG_f}^u(x,\bar f(x))\big)_{n+1}=0\right\}.$$
Then $\mass(\hat C_f)=0$.
\end{lem}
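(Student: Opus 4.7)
The plan is to reduce, using \Cref{prop:def_set_C_f}(i) (which gives $\mass(D\setminus D_f)=0$), to proving $\mass(\hat C_f\cap D_f)=0$, and then to exploit a vertical integration-by-parts identity. I would test the divergence formula (\Cref{thm:DivergenceFormula}) applied to $\nchi_{\GG_f}$ on $\XX\times\RR$ against the vector field $V(x,t) := \varphi(x)\eta(t)\nabla\pi^2$, where $\varphi\in C_c(B)$ is supported on a ball $B\subseteq\XX$ with $|\bar f|\leq M$ on $B$, and $\eta\in C_c^\infty(\RR)$ satisfies $\eta\equiv 1$ on $[-M,M]$. Since $\pi^2$ is harmonic on $\XX\times\RR$ and $\nabla\varphi\cdot\nabla\pi^2=0$ in the product, one has $\mathrm{div}(V) = \varphi(x)\eta'(t)$. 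By Fubini and the fundamental theorem of calculus,
$$\int \nchi_{\GG_f}\,\mathrm{div}(V)\,\dd(\mass\otimes\mathcal L^1) = \int_\XX\varphi(x)\eta(f(x))\,\dd\mass(x) = \int_\XX\varphi\,\dd\mass,$$
where the last equality uses $f=\bar f$ $\mass$-a.e.\ (since $\mass(C_f\cup J_f)=0$) together with $\eta(\bar f)=1$ on $\mathrm{supp}(\varphi)$. The divergence formula therefore yields
$$\int_\XX\varphi\,\dd\mass = -\int_{\XX\times\RR}\varphi(x)\,\eta(t)\,(\nu_{\GG_f}\cdot\nabla\pi^2)(x,t)\,\dd|\DIFF\nchi_{\GG_f}|(x,t). \qquad(\star)$$

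Next I would split the right-hand side of $(\star)$ using \Cref{prop:def_set_C_f}(ii)--(iii): up to a $|\DIFF\nchi_{\GG_f}|$-null set, $|\DIFF\nchi_{\GG_f}|\mres(B\times\RR)$ is concentrated on $(\mathrm{id},\bar f)(D_f\cap B)\cup((J_f\cap B)\times\RR)$. The contribution from $(J_f\cap B)\times\RR$ pushes forward via $\pi^1$ to a measure supported on $J_f$, which is $\mass$-null. The contribution from the graph part rewrites as
$$-\int_{D_f\cap B}\varphi(x)\,(\nu_{\GG_f}\cdot\nabla\pi^2)(x,\bar f(x))\,\dd\nu(x),$$
where $\nu := \pi^1_*\bigl(|\DIFF\nchi_{\GG_f}|\mres(\mathrm{id},\bar f)(D_f)\bigr)$ is a Radon measure on $D_f$. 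Here the pointwise value of $\nu_{\GG_f}\cdot\nabla\pi^2 = (\nu^u_{\GG_f})_{n+1}$ at $(x,\bar f(x))$ is interpreted as the Lebesgue representative with respect to the asymptotically doubling measure $|\DIFF\nchi_{\GG_f}|$ (see \Cref{zuppa} and the remark preceding the statement), which is precisely the representative used in the definition of $\hat C_f$. Hence this integrand vanishes identically on $\hat C_f\cap D_f$.

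Combining, $(\star)$ forces $\mass\mres(B\setminus J_f)$ to agree with a Radon measure that assigns zero mass to $\hat C_f\cap D_f\cap B$, so $\mass(\hat C_f\cap D_f\cap B)=0$. Covering $D_f$ by countably many such bounded pieces (possible since $D_f\subseteq\XX_f$, so $\bar f$ is finite on $D_f$) gives $\mass(\hat C_f\cap D_f)=0$, and then $\mass(\hat C_f)=0$ by \Cref{prop:def_set_C_f}(i). The principal technical obstacle is justifying that $\varphi\eta\nabla\pi^2$ is an admissible test vector field for \Cref{thm:DivergenceFormula} on the product space $\XX\times\RR$; this should follow from the product $\RCD$ structure and the harmonicity of $\pi^2$, possibly through an approximation by genuine test vector fields, and is a routine variant of computations already used in the paper to introduce $(\nu^u_{\GG_f})_{n+1}$.
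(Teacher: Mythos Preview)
Your strategy is sound and genuinely different from the paper's. The paper proves the lemma by a blow-up argument: at any point \(x\in\hat C_f\cap D_f\), item v) of \Cref{prop:def_set_C_f} forces the tangent halfspace of \(\GG_f\) at \((x,\bar f(x))\) to be of the form \(H'\times\RR\), and weak convergence of perimeter measures then yields
\[
\liminf_{r\searrow 0}\frac{|\DIFF\nchi_{\GG_f}|(B_r(x)\times\RR)}{r^n}=+\infty,
\]
so that, via \eqref{diffandper}, \(|\DIFF f|\) has infinite \(n\)-density at \(\HH^n\)-a.e.\ such \(x\) and a standard density theorem gives \(\HH^n(\hat C_f\cap D_f)=0\). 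Your argument instead identifies \(\mass\) directly with the pushforward signed measure \(\pi^1_*\big(-(\nu_{\GG_f}\cdot\nabla\pi^2)\,|\DIFF\nchi_{\GG_f}|\big)\) via Gauss--Green, and then reads off that this measure vanishes on \(\hat C_f\cap D_f\) because the density does. This is more direct and avoids blow-ups; the paper's route, on the other hand, yields the additional geometric information that the vertical cylinder perimeter diverges, which is in keeping with the blow-up techniques used throughout the rest of the section.

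There is one small gap worth flagging. Your reduction ``choose a ball \(B\) with \(|\bar f|\le M\) on \(B\), then cover \(D_f\) by bounded pieces'' does not work as stated: for a generic \(f\in\BV_{\rm loc}(\XX)\) there may be no open ball on which \(|\bar f|\) is bounded, so the pieces \(D_f\cap\{|\bar f|\le M\}\) need not sit inside any such \(B\). The clean fix is to drop that hypothesis, keep the raw identity
\[
\int_\XX\varphi(x)\,\eta(f(x))\,\dd\mass=-\int_{\XX\times\RR}\varphi(x)\,\eta(t)\,(\nu_{\GG_f}\cdot\nabla\pi^2)\,\dd|\DIFF\nchi_{\GG_f}|,
\]
and let \(\eta=\eta_k\nearrow 1\); dominated convergence applies on both sides because \(|\DIFF\nchi_{\GG_f}|(B\times\RR)=\pi^1_*|\DIFF\nchi_{\GG_f}|(B)<\infty\) by \eqref{diffandper}. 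This yields \(\mass=\pi^1_*\big(-(\nu_{\GG_f}\cdot\nabla\pi^2)\,|\DIFF\nchi_{\GG_f}|\big)\) as locally finite measures on \(\XX\), and evaluating on the Borel set \(\hat C_f\cap D_f\) (using \Cref{prop:def_set_C_f}(iii) to reduce to the graph, where the integrand vanishes by definition of \(\hat C_f\)) gives the conclusion. Your acknowledged technical point about the admissibility of \(\varphi\eta\nabla\pi^2\) in the Gauss--Green formula is indeed routine in this setting, since the vector field is bounded with bounded compactly supported divergence \(\varphi\eta'\).
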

\begin{proof}
By \Cref{thm:Structure} we only have to show that $(\HH^n\mres\mathcal R_n)(\hat C_f\cap D_f)=0$. Then, by \cite[Theorem 2.4.3]{AmbrosioTilli04} it is enough to show that 
\begin{equation}\notag
	\liminf_{r\searrow 0} \frac{|\DIFF f|(B_r(x))}{r^n}=+\infty 
\end{equation}
for $\HH^n$-a.e.\ $x\in\hat C_f\cap D_f$. Therefore, by \eqref{diffandper}, and by taking into account that $D_f\subseteq\mathcal{R}_n^*$, it is enough to show that
\begin{equation}\label{csdncas}
	\liminf_{r\searrow 0} \frac{|\DIFF \nchi_{\GG_f}|(B_r(x)\times\RR)}{r^n}=+\infty 
\end{equation}
for $\HH^n$-a.e.\ $x\in\hat C_f\cap D_f$.

The conclusion then follows from a blow-up argument.  Now we follow the first part of the proof of \cite[
Theorem 3.7]{ABPrank}, we sketch the argument. Take $x\in\hat C_f\cap D_f$, let $p\defeq(x,\bar{f}(x))$, take a sequence $\{r_i\}_i\subseteq (0,\infty)$, $r_i\searrow 0$.  We use repeatedly the membership $p\in\FF\GG_f$ and the implied properties as in \Cref{rem:properties_FE}.
We have that, up to subsequences,
\[
(\XX,r_i^{-1}\dist,\mass_x^{r_i},x)\to(\RR^n,\dist_e,\underline{\mathcal L}^n,0),\quad\text{ in the pmGH topology.}
\]
Let \((\ZZ,\dist_\ZZ)\) be a realization of such convergence. Then \((\ZZ\times\RR,\dist_\ZZ\times\dist_e)\) is a realization of
\[
\big(\XX\times\RR,r_i^{-1}(\dist\times\dist_e),(\mass\otimes\mathcal L^1)_p^{r_i},p,\GG_f\big)\to(\RR^{n+1},\dist_e,\underline{\mathcal L}^{n+1},0,H),
\]
where \(H\subseteq\RR^{n+1}\) is a halfspace, and where we took a non-relabelled subsequence. We also know, as $p\in\FF\GG_f$, that 
the rescaled perimeters \(|\DIFF\nchi_{\GG_f}|\) weakly converge, up to some dimensional constant, to \(\HH^n\mres\partial H\) in duality with \(C_{\rm bs}(\ZZ\times\RR)\).
Moreover, by the definition of $\hat C_f$ together with the item $v)$ of \Cref{prop:def_set_C_f}, and the fact that the last coordinates of $\nu_{\mathcal{G}_f}^u$ and $\nu_{\mathcal{G}_f}^v$ are equal,
we have that  $H$ can be written as $H'\times\RR$, for some $H'\subseteq\RR^n$ halfspace. Then the claim follows from weak convergence of measures, taking into account also item iii) of \Cref{rem:properties_FE}.
\end{proof}

Taking into account the proof of \cite[Theorem 3.7]{ABPrank}, and \Cref{prop:def_set_C_f}, we have that $|\DIFF f|^c$ is concentrated on $\hat{C}_f$, so that $|\DIFF f|^c=|\DIFF f|\mres \hat{C}_f$.
\begin{lem}
\label{csmkc}
It holds that
$$
\big(\nu_{\GG_f}^{ u}(x,\bar f(x))\big)_{1,\dots,n}=\sqrt{1-\Big(\nu_{\GG_f}^u(x,\bar f(x))\Big)_{n+1}^2}\,\nu_f^{u}(x),\quad\text{ for }|\DIFF f|\text{-a.e.\ }x\in D\setminus J_f
$$
and 
$$
\big(\nu_{\GG_f}^{ u}(x,\bar f(x))\big)_{n+1}\le 0,\quad\text{ for }|\DIFF f|\text{-a.e.\ }x\in D\setminus J_f.
$$
\end{lem}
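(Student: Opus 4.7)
The plan is to argue by blow-up at \(p := (x, \bar f(x))\) for \(|\DIFF f|\)-a.e.\ \(x \in D \setminus J_f\). Combining \eqref{diffandper} with items (i) and (ii) of \Cref{prop:def_set_C_f} reduces the argument to \(x \in D_f\), so that \(p\in\FF\GG_f\) (item (iii)) and \(v := (Au, \pi^2)\) is a system of good coordinates for \(\GG_f\) at \(p\) (item (iv)). Since \(\nabla v^i\cdot\nabla\pi^2=0\) on \(\XX\times\RR\), the associated unit normal decomposes into its horizontal and vertical parts as
\[
\tilde\nu\coloneqq\nu_{\GG_f}^v(p)=\bigl(A(x)\,w,\ s\bigr),\qquad w:=\bigl((\nu_{\GG_f}^u(p))_1,\dots,(\nu_{\GG_f}^u(p))_n\bigr),\quad s:=(\nu_{\GG_f}^u(p))_{n+1},
\]
and, by item (v), the blow-up of \(\GG_f\) at \(p\) in the coordinates \(v\) is the halfspace \(H=\{y\in\RR^{n+1}:y\cdot\tilde\nu\ge 0\}\).

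For the sign statement \(s\le 0\), I would use that \(\GG_f\) is downward-closed in the \(\RR\)-fibre and that this property is preserved by the rescaling and by \(L^1_{\mathrm{loc}}\)-convergence to \(H\). Consequently \(H\) itself is downward-closed in \(y_{n+1}\), and since \(\tilde\nu\) points into \(H\) the last component of \(\tilde\nu\), which equals \(s\), must be nonpositive.

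For the parallelism claim, I would split on the sign of \(s\). When \(s<0\), \(H\) is the Euclidean subgraph of the affine function \(F(y)=\alpha\cdot y\) with \(\alpha=-A(x)w/s\in\RR^n\); using \(|\tilde\nu|=1\) one has \(|A(x)w|=\sqrt{1-s^2}\), whence a direct computation gives \(\alpha/|\alpha|=A(x)w/\sqrt{1-s^2}\) (and the statement is trivial when \(\alpha=0\)). When \(s=0\), the vector \(A(x)w\) is itself a unit vector, \(H=\Pi\times\RR\) with \(\Pi=\{y'\in\RR^n : A(x)w\cdot y'\ge 0\}\), and by the comment following \Cref{normzero} the point \(x\) lies in \(\hat C_f\). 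In both cases one identifies \(\nu_f^v(x)=A(x)w/\sqrt{1-s^2}\) (with the obvious convention when \(A(x)w=0\)): in the first case because the rescaled \(f\) converges in \(L^1_{\mathrm{loc}}\) to \(F\) and the polar vector of \(F\) is \(\alpha/|\alpha|\); in the second case because a.e.\ super-level set \(\{f>t\}\) blows up to a halfspace with unit normal \(A(x)w\). Coupling this with the BV-analog of \Cref{remarkfrequenteAM}, \(\nu_f^v(x)=A(x)\nu_f^u(x)\), and applying \(A(x)^{-1}\) to \(A(x)w=\sqrt{1-s^2}\,\nu_f^v(x)\) yields \(w=\sqrt{1-s^2}\,\nu_f^u(x)\), which is the desired identity.

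The main obstacle I anticipate is making rigorous the identification of \(\nu_f^v(x)\) with the blow-up direction, in particular in the Cantor case \(s=0\), where the blow-up of \(f\) is not a classical function but rather a ``jump from \(-\infty\) to \(+\infty\)'' across \(\partial\Pi\). There one has to invoke the pmGH-compatibility of the polar vectors of sets of finite perimeter along good coordinates (as in \Cref{remarkfrequenteAM} and \cite{bru2021constancy,ABPrank}) together with the BV coarea formula \Cref{coarea} to transfer the blow-up analysis from super-level sets to \(f\) itself.
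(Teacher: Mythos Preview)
Your overall blow-up strategy and the reduction to \(D_f\) via \Cref{prop:def_set_C_f} are correct, and your sign argument (downward-closedness of \(\GG_f\) passes to \(H\)) is fine. The genuine gap is the one you yourself flag: the identification of \(\nu_f^v(x)\) with the direction read off from the blow-up of \(\GG_f\).

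In the \(s<0\) branch you write ``the rescaled \(f\) converges in \(L^1_{\rm loc}\) to \(F\) and the polar vector of \(F\) is \(\alpha/|\alpha|\)''. This does \emph{not} by itself give \(\nu_f^v(x)=\alpha/|\alpha|\). In the \(\RCD\) setting the polar vector \(\nu_f\) is defined only through the integration-by-parts identity of \Cref{thm:DivergenceFormula}; there is no established statement that its Lebesgue value at \(x\) can be recovered as the gradient direction of an \(L^1_{\rm loc}\)-blow-up of \(f\). The available bridge between \(\nu_f\) and blow-up geometry goes through \emph{sets} of finite perimeter: \cite[Lemma 3.27]{BGBV} gives \(\nu_f=\nu_{\{f>t\}}\) for a.e.\ \(t\) at \(|\DIFF\nchi_{\{f>t\}}|\)-a.e.\ point, and for sets the good-coordinates machinery of \cite{bru2021constancy} does identify the abstract normal with the blow-up halfspace direction. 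Your \(s=0\) branch already invokes level sets; your \(s<0\) branch tries to avoid them and this is precisely where the argument is incomplete.

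The paper avoids the case split altogether. It applies coarea and \cite[Lemma 3.27]{BGBV} \emph{first}, so that it suffices to compare, at \(\HH^{n-1}\)-a.e.\ \(x\in D_f\cap\FF E_t\), the blow-up halfspace \(H'\subseteq\RR^n\) of \(E_t\coloneqq\{f>t\}\) (with unit normal \(\nu=A(x)\nu_{E_t}^u(x)=A(x)\nu_f^u(x)\)) and the blow-up halfspace \(H\subseteq\RR^{n+1}\) of \(\GG_f\) (with unit normal \(\mu\)). The key geometric input, following \cite[Lemma 3.8]{ABPrank}, is the inclusion
\[
H'\times(-\infty,0)\ \subseteq\ H\cap\{(y,s):s<0\},
\]
which forces \(\mu=(\alpha\nu,\mu_{n+1})\) for some \(\alpha\in[0,1]\) and simultaneously yields \(\mu_{n+1}\le 0\). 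Since \(|\mu|=1=|\nu|\) one gets \(\alpha=\sqrt{1-\mu_{n+1}^2}\), and undoing \(A(x)\) finishes. This single inclusion argument replaces both your branches; in particular it never needs any statement about \(L^1_{\rm loc}\)-blow-ups of \(f\) itself. If you want to salvage your write-up, the cleanest fix is to drop the \(s<0\) branch and run the level-set comparison uniformly, exactly as you already sketch for \(s=0\).
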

\begin{proof}
The proof follows the lines of the proof of \cite[Lemma 3.8]{ABPrank}, but the conclusion is slightly different. We sketch here the argument.

By \Cref{prop:def_set_C_f} together with \eqref{PerHn}, \Cref{coarea}, and \cite[Lemma 3.27]{BGBV}, it is enough to show that for a.e.\ $t\in\RR$,
$$
\big(\nu_{\GG_f}^{u}(x,\bar f(x))\big)_{1,\dots,n}=\sqrt{1-\Big(\nu_{\GG_f}^u(x,\bar f(x))\Big)_{n+1}^2}\,\nu_{E_t}^{u}(x),\quad\text{ for }\HH^{n-1}\text{-a.e.\ }x\in D_f\cap\FF E_t,$$
where $E_t\defeq\{f>t\}$. We fix $t$ such that $E_t$ is a set of finite perimeter.

Take $x\in D_f\cap\FF E_t$ such that $A(x)u$ is a system of good coordinates for $E_t$ at $x$ and such that the conclusion of \cite[Proposition 4.8]{bru2021constancy} holds at $x$. Notice that $\HH^{n-1}$-a.e.\  $x\in D_f\cap\FF E_t$ satisfies the previous two properties, as a consequence of a Lebesgue point argument as the one in item iii) of the proof of \cite[Proposition 3.6]{ABPrank}. We can also assume that if $\nu=\nu(x)$ and $\mu=\mu(x,\bar f(x))$ are given by \eqref{eq:def_nu_in_good_coord} for $A(x) u$ and $(A(x)u,\pi^2)$ respectively (for $E_t$ and $\GG_f$ respectively), it holds that 
\begin{equation}\label{njld}
\nu=A(x)\nu_{E_t}^u(x)\qquad \mu=\Big(A(x)\big(\nu_{\GG_f}^u(x,\bar f(x))\big)_{1,\dots,n},\big(\nu_{\GG_f}^u(x,\bar f(x))\big)_{n+1}\Big),
\end{equation}
compare with \Cref{remarkfrequenteAM}, and that $|\nu|=|\mu|=1$.
Then, following \cite[Lemma 3.8]{ABPrank} (with the same notation), we have that 
$$
H'\times (-\infty,0)\subseteq H \cap\{(y,s)\in\RR^n\times \RR:s<0\},
$$
where 
$$
H'=\{y\in\RR^n:y\,\cdot\, \nu\ge 0\},\qquad H=\{z\in\RR^{n+1}:z\,\cdot\, \mu\ge 0\}.
$$
Therefore, $\mu=(\alpha\nu,\mu_{n+1})$, for some $\alpha\in [0,1]$. Now we have that 
$$
1=|\mu|^2=\alpha^2|\nu|^2+\mu_{n+1}^2=\alpha^2+\mu_{n+1}^2,
$$
so that we conclude recalling \eqref{njld} and the fact that $A(x)$ is invertible.  
\end{proof}
\begin{lem}\label{cdskcasn}
It holds that 
$$\dv{\pi^1_*|\DIFF\nchi_{\GG_f}|}{\mass}(x)=-\big((\nu_{\GG_f}^{ u})_{n+1}(x,\bar f(x))\big)^{-1},\quad\text{ for $\mass$-a.e.\ $x\in D\setminus (J_f\cup  C_f)$}.$$
\end{lem}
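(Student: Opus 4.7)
The plan is to obtain the density formula by a blow-up analysis at $p:=(x,\bar f(x))$, identifying the Radon-Nikodym derivative with the Euclidean area-formula Jacobian of $\pi^1$ restricted to the tangent hyperplane of $\partial^*\GG_f$ at $p$. Since $\mass$ is asymptotically doubling (as $(\XX,\dist,\mass)$ is PI), it suffices to show
\[
\lim_{r\searrow 0}\frac{|\DIFF\nchi_{\GG_f}|(B_r(x)\times\RR)}{\mass(B_r(x))}=-\big((\nu_{\GG_f}^u)_{n+1}(p)\big)^{-1}
\]
for $\mass$-a.e.\ $x\in D\setminus(J_f\cup C_f)$. I would restrict to $x\in D_f\setminus\hat C_f$, which is $\mass$-conull in $D\setminus(J_f\cup C_f)$ by \Cref{prop:def_set_C_f}(i) and \Cref{normzero}. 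For such $x$ the value $(\nu_{\GG_f}^u)_{n+1}(p)$ is defined and nonzero, and the subgraph structure of $\GG_f$ (via \Cref{prop:def_set_C_f}(v), applied below) forces $(\nu_{\GG_f}^u)_{n+1}(p)<0$, consistently with \Cref{csmkc}.

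Next, I perform the blow-up. By \Cref{prop:def_set_C_f}(v), along a sequence $r_i\searrow 0$ realizing the pmGH convergence, $(\XX\times\RR, r_i^{-1}(\dist\times\dist_e),(\mass\otimes\mathcal L^1)_p^{r_i},p,\GG_f)$ converges to $(\RR^{n+1},\dist_e,\underline{\mathcal L}^{n+1},0,H)$ with $H=\{y\in\RR^{n+1}:y\cdot\nu(p)\ge 0\}$, where $\nu(p)$ is the good-coordinate normal given by \eqref{eq:def_nu_in_good_coord} for the coordinates $v=(A(x)u,\pi^2)$. By \Cref{remarkfrequenteAM}, since the last coordinate of $v$ is unaffected by $A(x)$, one has $\nu_{n+1}(p)=(\nu_{\GG_f}^u)_{n+1}(p)\neq 0$, so that $\partial H$ is the graph of the affine map $y_{n+1}=-(\nu'\cdot y')/\nu_{n+1}$ over $\RR^n$. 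The Euclidean area formula then gives $\pi^1_*(\HH^n\mres\partial H)=|\nu_{n+1}|^{-1}\mathcal L^n$. Picking $L>1/|\nu_{n+1}|$, so that $\partial H$ restricted to $B_1^{\RR^n}(0)$ is contained in $B_1^{\RR^n}(0)\times(-L,L)$, I test the weak convergence of rescaled perimeters from \Cref{rem:properties_FE}(ii) against continuous approximations of $\nchi_{B_1^{\RR^n}(0)\times(-L,L)}$, and use \eqref{usefullimits} to identify normalization constants; this yields
\[
\lim_{i\to\infty}\frac{|\DIFF\nchi_{\GG_f}|\bigl(B_{r_i}(x)\times(\bar f(x)-Lr_i,\bar f(x)+Lr_i)\bigr)}{\mass(B_{r_i}(x))}=\frac{1}{|\nu_{n+1}|}.
\]

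The main obstacle is to show that the perimeter outside this cylinder is asymptotically negligible:
\[
\frac{|\DIFF\nchi_{\GG_f}|\bigl(B_{r_i}(x)\times(\RR\setminus(\bar f(x)-Lr_i,\bar f(x)+Lr_i))\bigr)}{\mass(B_{r_i}(x))}\xrightarrow{i\to\infty}0,
\]
which is not immediate from pmGH weak convergence, since the target is unbounded in the vertical direction after rescaling. I would handle this by a vertical exhaustion: by \eqref{diffandper} one has $|\DIFF\nchi_{\GG_f}|(B_{r_i}(x)\times\RR)\le |\DIFF f|(B_{r_i}(x))+\mass(B_{r_i}(x))=O(\mass(B_{r_i}(x)))$, so the rescaled perimeters are tight; weak upper semicontinuity on the closed slabs $\overline{B_1^{\RR^n}(0)}\times(\RR\setminus(-M,M))$ (which are disjoint from $\partial H$ for every $M>1/|\nu_{n+1}|$) forces each of their masses to vanish in the limit, and a standard monotone exhaustion in $M$ completes the argument. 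Combining the three displays above, and noting that the limit depends only on the pointwise datum $(\nu_{\GG_f}^u)_{n+1}(p)$ (so it is independent of the chosen subsequence $r_i$), yields the lemma.
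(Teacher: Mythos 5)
Your overall blow-up strategy matches the paper's, but the step you yourself flag as ``the main obstacle'' --- showing that no perimeter mass escapes vertically under rescaling --- is exactly where the argument breaks down. A uniform bound on the total rescaled mass, which is all that \eqref{diffandper} provides, does \emph{not} imply tightness: the rescaled perimeters could carry a definite fraction of their mass at heights diverging with $i$, and such mass is invisible both to the weak limit (tested only against $C_{\rm bs}$ functions) and to the total-mass bound. Likewise, upper semicontinuity of masses under weak convergence holds for \emph{compact} sets, not for the unbounded closed slabs $\overline{B_1^{\RR^n}(0)}\times(\RR\setminus(-M,M))$, so the conclusion that their masses vanish does not follow. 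Note also that the upper bound from \eqref{diffandper} is strictly larger than the mass of the limit $c\,\HH^n\mres\partial H$ on the unit cylinder whenever $g_f(x)>0$ (it scales like $g_f+1$ where the limit carries $\sqrt{g_f^2+1}$), so one cannot even argue by matching total masses to rule out escape.

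The paper closes this gap with a genuinely additional input: the cone-containment property extracted from the proof of \cite[Theorem 3.7]{ABPrank}. For every $\epsilon>0$ there is $F_\epsilon\subseteq\FF\GG_f$ carrying all but $\epsilon$ of the relevant perimeter such that, around each $p=(x,\bar f(x))\in F_\epsilon$, one has $F_\epsilon\cap(B_r(x)\times\RR)\subseteq C_\gamma(x,\bar f(x))$ for $r$ small; hence $|\DIFF\nchi_{\GG_f}|\mres F_\epsilon$ restricted to $B_r(x)\times\RR$ lives in the bounded cylinder $B_r(x)\times(\bar f(x)-\gamma r,\bar f(x)+\gamma r)$, where the weak convergence can legitimately be used. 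Choosing $F_\epsilon$ made of density-one points guarantees that the restricted and unrestricted measures have the same blow-up, and the Radon--Nikodym derivative is recovered on letting $\epsilon\searrow 0$. A secondary gap: you assert the strict sign $(\nu_{\GG_f}^u)_{n+1}(p)<0$ for $\mass$-a.e.\ $x$ by appeal to \Cref{csmkc}, but that lemma only gives the (non-strict) inequality $|\DIFF f|$-a.e., which does not cover $\mass$-a.e.\ (e.g.\ where $g_f=0$); the paper proves the sign by a separate Fubini-type argument comparing the limit measure of the blow-up halfspace $H$ on the two balls $\pm B_\epsilon$ centred at $(0,\mp1)$.
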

\begin{proof}
	Recalling \Cref{prop:def_set_C_f}, we can reduce ourselves to show the conclusion only for $\mass$-a.e.\ $x\in D_f\setminus {( C_f\cup J_f)}$. 
 
	By \Cref{normzero}, we know that\begin{equation}\label{vfondsanj}
		 (\nu_{\GG_f}^{{ u}})_{n+1}(x,\bar f(x))\ne 0,\quad\text{ for $\mass$-a.e.\ $x\in D_f\setminus  {(C_f\cup J_f)}$}.
	\end{equation} 
	We prove now that in fact
 \begin{equation}\label{mkmdsm}
	(\nu_{\GG_f}^{{ u}})_{n+1}(x,\bar f(x))< 0,\quad\text{ for $\mass$-a.e.\ $x\in D_f\setminus  {( C_f\cup J_f)}$}.
\end{equation}
Fix $x\in D_f\setminus C_f$ satisfying \eqref{vfondsanj}, let $p\defeq(x,\bar f(x))$ and take $\{r_i\}_i\subseteq(0,\infty)$ with $r_i\searrow 0$. Up to subsequences, we have that 
\[
\big(\XX\times\RR,r_i^{-1}(\dist\times\dist_e),(\mass\otimes\mathcal L^1)_p^{r_i},p,\GG_f\big)\to(\RR^{n+1},\dist_e,\underline{\mathcal L}^{n+1},0,H),
\]
in a realization $(\ZZ\times\RR,\dist_{\ZZ\times\RR})$,
for some halfspace $H$. Arguing as in the proof of \Cref{csmkc}, it holds that $H=\{z\in \RR^{n+1}:z\,\cdot\,\mu\ge 0\}$,
for $$\mu=\Big(A(x)\big(\nu_{\GG_f}^u(x,\bar f(x))\big)_{1,\dots,n},\big(\nu_{\GG_f}^u(x,\bar f(x))\big)_{n+1}\Big).$$
Let $\pm B_\epsilon\defeq B_\epsilon^\ZZ(0_{\RR^n})\times B_\epsilon^{\RR}(\pm 1_\RR)\subseteq\ZZ\times\RR$.
Take $\epsilon>0$ small enough so that $(\pm B_{ \epsilon})\cap \partial H\ne\varnothing$. Such $\epsilon$ exists by \eqref{vfondsanj}. Now we compute, by convergence in $L^1_{\rm loc}$,
\begin{align*}
&\underline{\mathcal L}^{n+1}(H\cap(-B_\epsilon))-\underline{\mathcal L}^{n+1}(H\cap B_\epsilon)=\lim_{i\to+\infty} \big((\mass\otimes \mathcal L^1)_p^{r_i}(\GG_f\cap(-B_\epsilon))-(\mass\otimes \mathcal L^1)_p^{r_i}(\GG_f\cap B_\epsilon)\big).
\end{align*}
By Fubini's theorem,
\begin{align*}
&(\mass\otimes \mathcal L^1)_p^{r_i}(\GG_f\cap(-B_\epsilon))-(\mass\otimes \mathcal L^1)_p^{r_i}(\GG_f\cap B_\epsilon)\\&\qquad=\frac{1}{(\mass\otimes \mathcal L^1)(B_{r_i}(p))}\int_{B_\epsilon^\ZZ(0_{\RR^n})} \bigg(\mathcal H^1\big((\{x\}\times (-r_i-\epsilon,-r_i+\epsilon))\cap \GG_f\big)\\&\qquad\qquad\qquad\qquad\qquad\qquad\qquad\qquad\qquad-\mathcal H^1\big((\{x\}\times (r_i-\epsilon,r_i+\epsilon))\cap \GG_f\big) \bigg)\dd\mass(x)
\\&\qquad\ge 0,
\end{align*}
where at the second member we have the image measure of $\mass$ corresponding to the $i^{\rm th}$ rescaling. Therefore  
$\underline{\mathcal L}^{n+1}(H\cap(-B_\epsilon))-\underline{\mathcal L}^{n+1}(H\cap B_\epsilon)\ge 0$, so that \eqref{mkmdsm} follows, taking into account also \eqref{vfondsanj} and the defining expression for $H$.
	
	Fix a ball $\bar B\subseteq\XX$.
By  \eqref{vfondsanj} and the proof of \cite[Theorem 3.7]{ABPrank} (see in particular \cite[(3.20) and (3.24)]{ABPrank}) we know that for every $\epsilon>0$ there exist $\gamma=\gamma(\epsilon)\in (0,\infty)$ and a set $F_{\epsilon}\subseteq \FF\GG_f$ such that 
$$|\DIFF\nchi_{\GG_f}|\Big(\big( ((\bar B\cap D_f)\setminus  C_f)\times\RR\big)\setminus F_\epsilon\Big)<\epsilon$$ 
and such that, for every $p=(x,\bar f(x))\in F_\epsilon$, there exists $r_0=r_0(p,\epsilon)$ satisfying 

\begin{equation}\label{csdknac}
	F_\epsilon\cap (B_r(x)\times\RR)\subseteq C_\gamma(x,\bar f(x)),\qquad\text{if }r<r_0,
\end{equation}
where $$
C_\gamma(x,t)\defeq\big\{(y,s)\in\XX\times\RR: \gamma\dist(y,x)\ge |s-t|\big\}.
$$
We can and will assume that ${F}_\epsilon$ is made of points of density $1$ with respect to $|\DIFF\nchi_{\GG_f}|$. This will ensure that for $p\in{F}_\epsilon$, the rescaled measures $|\DIFF\nchi_{\GG_f}|\mres F_\epsilon$ and $|\DIFF\nchi_{\GG_f}|$ around $p$ have the same weak limit.
Let $\hat F_\epsilon\defeq\pi^1 (F_\epsilon)$. 
By  a blow-up argument (taking into account \Cref{rem:properties_FE}), for $\mass$-a.e.\ $x\in \big(( \bar B\cap D_f)\setminus C_f\big)\cap \hat F_\epsilon$,
$$
\lim_{r\searrow 0}\frac{(|\DIFF\nchi_{\GG_f}|\mres F_\epsilon)(B_r(x)\times\RR)}{ r^n}=\omega_n\Theta_n(\mass,x)\big|\big(\nu_{\GG_f}^{u}(x,\bar f(x))\big)_{n+1}\big|^{-1}.
$$
Here we exploited \eqref{csdknac} and the fact that $\Theta_n(\mass,x)=\Theta_{n+1}(\mass\otimes\mathcal L^1,(x,t))$, as an easy computation shows.
Now notice that the left-hand side of the equation above reads as
$$
\dv{(\pi^1_*|\DIFF\nchi_{\GG_f}|)\mres \hat F_\epsilon}{\mass}(x)\omega_n\Theta_n(\mass,x)
$$
and, by \eqref{mkmdsm}, the right-hand side reads as $$\omega_n\Theta_n(\mass,x)\big(-\nu_{\GG_f}^{u}(x,\bar f(x))\big)_{n+1}^{-1}.$$
Now we conclude recalling \eqref{massandpi} and the arbitrariness of $\bar B$.
\end{proof}

The first part of the following lemma can be proved also exploiting \cite[Theorem 5.1]{Ambrosio-Pinamonti-Speight15}. Nevertheless, we give a different proof, tailored to this setting and more in the spirit of this paper.
\begin{lem}[Area formula]\label{area}
It holds that
\begin{equation}\notag
\dv{\pi^1_*|\DIFF\nchi_{\GG_f}|}{\mass}(x)=\sqrt{g_f(x)^2+1},\quad\text{ for $\mass$-a.e.\ $x\in D\setminus( C_f\cup J_f)$}.
\end{equation} 
\end{lem}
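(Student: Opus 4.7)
The plan is to combine \Cref{csmkc} and \Cref{cdskcasn} via a divergence-theorem identity relating $|\DIFF f|$ on $\XX$ with $|\DIFF\nchi_{\GG_f}|$ on $\XX\times\RR$. By \Cref{prop:def_set_C_f} it suffices to check the identity at $\mass$-a.e.\ $x \in D_f\setminus(J_f\cup C_f)$; on this good set $|\DIFF f|=g_f\mass$, and by \Cref{cdskcasn} the Radon--Nikodym derivative on the left-hand side equals $\alpha(x)^{-1}$, where $\alpha(x):=-(\nu_{\GG_f}^u)_{n+1}(x,\bar f(x))$. Note that $\alpha\in(0,1]$: positivity is \eqref{mkmdsm}, while $\alpha\le 1$ follows from Cauchy--Schwarz applied to $\nu_{\GG_f}\cdot\nabla\pi^2$ using $|\nu_{\GG_f}|=|\nabla\pi^2|=1$. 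The task thus reduces to proving $\alpha^{-1}=\sqrt{1+g_f^2}$ on the good set.

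To produce the key identity, apply \Cref{thm:DivergenceFormula} to $\GG_f$ on $\XX\times\RR$ with a test vector field of the form $w(x,t)=(\eta(t)v(x),0)$, where $v\in\TestV(\XX)$ is compactly supported and $\eta$ is a smooth cutoff equal to $1$ on a sufficiently large interval (after a standard truncation of $f$). Since $\mathrm{div}\,w=\eta(t)\,\mathrm{div}_\XX v(x)$, a Fubini computation combined with the divergence identity on $\XX$ yields
\[
\int_\XX v\cdot\nu_f\dd|\DIFF f|=\int_{\XX\times\RR}(v,0)\cdot\nu_{\GG_f}\dd|\DIFF\nchi_{\GG_f}|.
\]
Specializing to $v=\phi\,\nabla u^k$ with $\phi$ a bounded Borel function of compact support (after approximating $\nabla u^k$ by test vector fields within the capacitary calculus) and exploiting that the scalar-product matrix of $(u,\pi^2)$ on $\XX\times\RR$ is $\mathrm{diag}(M,1)$, a direct computation gives $v\cdot\nu_f=\phi(\nu_f^u)_k$ and $(v,0)\cdot\nu_{\GG_f}=\phi(\nu_{\GG_f}^u)_k$. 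Localizing on $D_f\setminus(J_f\cup C_f)$, rewriting the right-hand side via \Cref{cdskcasn} and the concentration of $|\DIFF\nchi_{\GG_f}|$ on the graph of $\bar f$ from \Cref{prop:def_set_C_f}, and letting $\phi$ vary, we obtain
\[
g_f(x)(\nu_f^u)_k(x)=\alpha(x)^{-1}(\nu_{\GG_f}^u)_k(x,\bar f(x))\quad\mass\text{-a.e.\ on }D_f\setminus(J_f\cup C_f),\ \text{for every }k;
\]
by \Cref{csmkc} this simplifies to $g_f(\nu_f^u)_k=\alpha^{-1}\sqrt{1-\alpha^2}(\nu_f^u)_k$.

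A case split concludes. On $\{g_f>0\}\cap(D_f\setminus(J_f\cup C_f))$, the relation $|\nu_f|^2=(\nu_f^u)^\top M^{-1}(\nu_f^u)=1$ $|\DIFF f|$-a.e., together with the positive-definiteness of $M^{-1}$ implied by \eqref{cdskcee}, forces $\nu_f^u\ne 0$ $\mass$-a.e., whence $g_f=\sqrt{1-\alpha^2}/\alpha$, i.e., $\alpha^{-1}=\sqrt{1+g_f^2}$. On $\{g_f=0\}\cap(D_f\setminus(J_f\cup C_f))$, \eqref{diffandper} yields $\alpha^{-1}\le g_f+1=1$, while $\alpha\le 1$ yields $\alpha^{-1}\ge 1$, hence $\alpha^{-1}=1=\sqrt{1+g_f^2}$. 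In either case the area formula follows via \Cref{cdskcasn}. The principal technical obstacle is making the divergence identity rigorous: neither $(\eta v,0)$ on $\XX\times\RR$ nor $\phi\nabla u^k$ on $\XX$ is literally a test vector field in the $\RCD$-Sobolev calculus, so both must be approximated within the capacitary-module framework of \cite{debin2019quasicontinuous}, and convergence of both integrals under the approximation must be verified by a standard density argument.
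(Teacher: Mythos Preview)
Your route is genuinely different from the paper's, and the formal skeleton is sound: once one has the measure identity $(\nu_f^u)_k\,|\DIFF f|=\pi^1_*\big((\nu_{\GG_f}^u)_k\,|\DIFF\nchi_{\GG_f}|\big)$ on $D$, restricting to $D_f\setminus(J_f\cup C_f)$ and combining \Cref{cdskcasn} with \Cref{csmkc} yields $g_f=\alpha^{-1}\sqrt{1-\alpha^2}$ on $\{g_f>0\}$, while the $\{g_f=0\}$ case is disposed of exactly as in the paper's proof of \Cref{thmmain1}. The paper instead never invokes a global divergence identity: it proves the area formula first for $f\in\BV\cap\LIP$ by a direct blow-up computation (choosing a sequence realizing $\lip f(x)$ and reading off the slope of the tangent halfspace $H$), and then reduces general $f$ to the Lipschitz case via Lusin-type approximation \cite[Proposition~4.3]{kinkorshatuo}, comparing the blow-ups of $\GG_f$ and $\GG_h$ on the coincidence set. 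The paper's approach is more geometric and self-contained; yours is more algebraic and makes cleaner use of \Cref{csmkc} and \Cref{cdskcasn} as black boxes.

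That said, the ``principal technical obstacle'' you flag is a genuine gap, not a routine density argument. Two issues need real work. First, to even write $\int_{\XX\times\RR}(v,0)\cdot\nu_{\GG_f}\,\dd|\DIFF\nchi_{\GG_f}|$ you are implicitly using a splitting $L^0_{\capa}(T(\XX\times\RR))\cong L^0_{\capa}(T\XX)\oplus L^0_{\capa}(T\RR)$ compatible with the perimeter measure, and you need that pull-backs of $\TestV(\XX)$ fields along $\pi^1$ lie in (or are approximable within) $\TestV(\XX\times\RR)$; none of this is developed in the paper. Second, extending the identity from $v\in\TestV(\XX)$ to $v=\phi\nabla u^k$ with $\phi$ Borel requires density of test vector fields in a topology for which both sides are continuous. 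Since $|\DIFF f|$ may be singular with respect to $\mass$ on $J_f\cup C_f$, density in $L^2_\mass(T\XX)$ does not directly control $\int v\cdot\nu_f\,\dd|\DIFF f|$ globally; you would need the capacitary-module machinery of \cite{debin2019quasicontinuous,BGBV} to justify this, and then still argue that the pairing $v\mapsto\int v\cdot\nu_f\,\dd|\DIFF f|$ extends. The paper's blow-up route was chosen precisely to bypass these functional-analytic issues (cf.\ the discussion in the introduction about $\nu_f^u$ and $\nu_{\GG_f}^u$ lacking a clear distributional meaning). Your outline is plausible and would be an elegant alternative, but as written the approximation step is the heart of the matter rather than a postscript.
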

\begin{proof}
Recall that by \Cref{cdskcasn}, for $\mass$-a.e.\ $x\in D_f\setminus (J_f\cup  C_f)$ it holds that $\big(\nu_{\GG_f}^u(x,\bar f(x))\big)_{n+1}<0$.

We start from the case $f\in \BV(\XX)\cap\LIP(\XX)$. First, recall \cite[Proposition 6.3]{AH16} and \cite{Cheeger00}, which imply that $|\DIFF f|=(\lip f)\mass$.
Also, by \Cref{prop:def_set_C_f}, we reduce ourselves to show the claim for $\mass$-a.e.\ $x\in D_f\setminus ( C_f\cup J_f)$. Take then $x\in D_f\setminus ( C_f\cup J_f)$  such that $p\defeq(x, f(x))$ is a Lebesgue point for $\nu_{\GG_f}^u$ with respect to $|\DIFF\nchi_{\GG_f}|$. This choice can be made $\mass$-a.e. by \eqref{massandpi}.  

We take $\{x_i\}\subseteq\XX$ with $x_i\rightarrow x$ and 
$$
\lim_{i\to +\infty} \frac{f(x_i)-f(x)}{\dist(x_i,x)}=\pm\lip f(x).
$$
Set $r_i\defeq \dist(x,x_i)$, and notice that we can, and will, assume that $r_i\searrow 0$. Therefore, up to subsequences, we have that 
$$\big(\XX\times\RR,r_i^{-1}(\dist\times\dist_e),(\mass\otimes\mathcal L^1)_p^{r_i},p,\GG_f\big)\to(\RR^{n+1},\dist_e,\underline{\mathcal L}^{n+1},0,H),$$
where $H$ is the halfspace 
$$
H\defeq\big\{y\in\RR^{n+1}:y\,\cdot\,\nu_{\GG_f}^{{v}}(p)\ge 0\big\},
$$
for $v\defeq(A(x)u,\pi^2)$, see \Cref{prop:def_set_C_f} and \Cref{remarkfrequenteAM}. 
We assume that this convergence is realized in a proper metric space  $(\ZZ\times\RR,\dist_{\ZZ\times \RR})$ and, up to taking a non-relabelled subsequence, we assume that the rescaled perimeters $|\DIFF\nchi_{\GG_f}|$ weakly converge to $\frac{1}{\omega_{n+1}}\HH^n\mres\partial H$ in duality with $C_{\rm bs}(\ZZ\times\RR)$. Therefore, identifying $(x_i,f(x_i))$ with the corresponding point with respect to the $i^{\rm th}$ isometric embedding, we have that, up to a non-relabelled subsequence, $(x_i,f(x_i))\rightarrow \bar q\defeq(\bar z, \pm\lip f(x))\in\RR^{n+1}$ with respect to  $\dist_{\ZZ\times\RR}$, where $\dist_e(\bar z,0)=1$.  Therefore, if we  show that $\bar q\in\partial H$, it will follow  that
\begin{equation}\label{eqn:Ineq1}
    \big(-\nu_{\GG_f}^{ u}(x,\bar f(x)\big)_{n+1}^{-1}\ge \sqrt{\lip f(x)^2+1}.
\end{equation}

Take $\bar q'=(\bar z,t)$ such that $\bar{q}'\in\partial H$. The claim will be proved by showing that $\bar q=\bar q'$. By weak convergence of measures and \Cref{lem:char_G_f}, we find a sequence of points $\{(x_i',f(x_i'))\}_i$ with $(x_i',f(x_i'))\rightarrow \bar q'$ in $\ZZ\times\RR$, where we identified $(x_i',f(x_i'))$ with the corresponding point with respect to the $i^{\rm th}$ isometric embedding.
Now we compute, if $L$ is the global Lipschitz  constant of $f$,
\begin{align*}
	|\pm\lip f(x)-t|&=\lim_{i\to+\infty}\frac{ |f(x_i)-f(x_i')|}{r_i}\le \limsup_{i\to+\infty}L\frac{\dist(x_i,x_i')}{r_i}=\limsup_{i\to+\infty}L \dist_\ZZ (x_i,x_i')\\&\le\limsup_{i\to+\infty} L(\dist_\ZZ(x_i,\bar z)+\dist_\ZZ(x_i',\bar z))=0.
\end{align*}

Now we show the reverse inequality in \eqref{eqn:Ineq1}. Take $\bar q\defeq(\bar z,t)\in\partial H$ with $\dist_e(\bar z,0)=1$. As before, we find $(x_i,f(x_i))\rightarrow \bar q$ in $\ZZ\times\RR$.
But then
\begin{align*}
|t|&= \lim_{i\to+\infty}\frac{ |f(x_i)-f(x)|}{r_i}=\lim_{i\to+\infty}\frac{ |f(x_i)-f(x)|}{\dist(x_i,x)}\frac{\dist(x_i,x)}{r_i}\\&\le \limsup_{i\to+\infty} \frac{ |f(x_i)-f(x)|}{\dist(x_i,x)}\limsup_{i\to+\infty} \dist_{\ZZ}(x_i,x)\le\lip f(x)\dist_e(\bar z,0)=\lip f(x).
\end{align*}
This easily implies, by the arbitrariness of $\bar q$,
that 
$$\big(-\nu_{\GG_f}^{ u}(x,\bar f(x))\big)_{n+1}^{-1}\le \sqrt{\lip f(x)^2+1}.$$

Now we pass to the general case.
	Take $\epsilon>0$ and, by \cite[Proposition 4.3]{kinkorshatuo}, take $h\in\BV(\XX)\cap \LIP(\XX)$ with $\mass(\{h\ne f\})<\epsilon$. Recall \Cref{prop:def_set_C_f} and call $D_\epsilon\defeq(D_f\cap D_h\cap\{h=f\})\setminus C_f$. It will be enough to prove the claim for $\mass$-a.e.\ $x\in D_\epsilon$. Notice that by \cite[Proposition 3.7]{kinkorshatuo}, $|\DIFF (f-h)|(D_\epsilon)=0$, in particular, $g_f=\lip h\ \mass$-a.e.\ on $D_\epsilon$. 
	
 Now notice that for $\mass$-a.e.\ $x\in D_\epsilon$, it holds that $\XX\setminus\partial^*\GG_f$ is of $n$-density $0$ for $|\DIFF\nchi_{\GG_{h}}|$ at $(x,h (x))$, by \eqref{diffandper}. 
 Indeed,
 \begin{align*}
     \frac{|\DIFF\nchi_{\GG_h}|(B_r(x,h(x))\setminus\partial^* \GG_f)}{r^n}&=    \frac{|\DIFF\nchi_{\GG_h}|\big(\{(y,t)\in B_r(x,h(x)): h(y)\ne \bar f(y)\}\big)}{r^n}\\&\le \frac{(\pi_*^1|\DIFF\nchi_{\GG_h}|)(B_r(x)\cap \{h\ne\bar f\})}{r^n}\le \frac{(|\DIFF h|+\mass )(B_r(x)\cap \{h\ne f\})}{r^n},
 \end{align*}
 whence the conclusion at density $0$ points of $\{h\ne f\}$ follows, taking into account $|\DIFF h|\ll\mass$ and the fact that $\mass$ is concentrated on $\mathcal R_n^*$.
 At such points, $|\DIFF\nchi_{\GG_h}|$ and $|\DIFF\nchi_{\GG_f}|\wedge |\DIFF\nchi_{\GG_h}|$, properly rescaled, have the same weak limit. Hence, for $\mass$-a.e.\ $x\in D_\epsilon$, the blow-ups of $\GG_f$ and $\GG_h$ coincide at $(x, h(x))$, by a monotonicity argument (use also the last conclusion of \Cref{csmkc}). Now we use item v) of \Cref{prop:def_set_C_f} together with \Cref{remarkfrequenteAM} 
 to deduce that $\nu_{\GG_h}^u (x,h (x))=\nu_{\GG_f}^u (x,\bar f (x))$ holds for $\mass$-a.e.\ $x\in D_\epsilon$.
	Now, the claim follows from what proved in the first part of the proof.
\end{proof}

\begin{lem}\label{negl}
It holds that
$$\HH^n\big(\big\{(x,t):x\in J_f,t=f^\vee(x)\big\}\big)=\HH^n\big(\big\{(x,t):x\in J_f,t=f^\wedge(x)\big\}\big)=0.$$
\end{lem}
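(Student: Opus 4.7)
\emph{Plan.} The two claims are symmetric, so I focus on showing $\HH^n(A)=0$ for $A:=\{(x,t):x\in J_f,\ t=f^\vee(x)\}$; the argument for $f^\wedge$ is identical. The strategy is to slice $A$ into thin horizontal strips and bound each using a product estimate for Hausdorff measures, so the proof rests on two ingredients: (a) a metric–space product inequality, and (b) the $\sigma$-finiteness of $J_f$ with respect to $\HH^{n-1}$.

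The first ingredient is the routine estimate
\[
\HH^n(B\times I)\le c_n\,\HH^{n-1}(B)\,\mathcal L^1(I),
\]
valid for every Borel $B\subseteq\XX$ and every bounded interval $I\subseteq\RR$ (where $\XX\times\RR$ carries the product metric $\dist\times\dist_e$). The proof is a standard covering argument: given $\delta>0$, cover $B$ by sets $\{U_j\}$ of diameter at most $\delta$ with $\sum_j\mathrm{diam}(U_j)^{n-1}$ close to $\HH^{n-1}_\delta(B)$; then subdivide $I$ into subintervals of length $\mathrm{diam}(U_j)$, take products (which preserve diameter in the product metric up to a dimensional factor), and pass to the limit $\delta\to 0$.

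The second ingredient is the $\sigma$-finiteness of $J_f$ under $\HH^{n-1}$, which follows from the BV structure on RCD spaces. By definition, for every $x\in J_f$ and every $s\in(f^\wedge(x),f^\vee(x))$ one has $x\in\partial^*\{f>s\}$; by \Cref{coarea} the set $\{f>s\}$ has locally finite perimeter for $\mathcal L^1$-a.e.\ $s$, so picking a countable dense $S\subseteq\RR$ of such good values gives $J_f\subseteq\bigcup_{s\in S}\partial^*\{f>s\}$. Combining \eqref{eq:repr_per} with the codimension-one analogue of \eqref{PerHn} (namely $\HH^{n-1}\mres\partial^*E\ll|\DIFF\nchi_E|$ for sets $E\subseteq\XX$ of locally finite perimeter), each $\partial^*\{f>s\}$ is $\HH^{n-1}$-$\sigma$-finite, hence so is $J_f$. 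Thus we may write $J_f=\bigcup_m J_f^m$ with $\HH^{n-1}(J_f^m)<\infty$ for every $m$.

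Putting the two ingredients together: fix $m$ and $\epsilon>0$, partition $\RR$ into half-open intervals $\{I_k\}_{k\in\mathbb Z}$ of length $\epsilon$, and set $J_f^{m,k}:=\{x\in J_f^m:f^\vee(x)\in I_k\}$. Since $A\cap(J_f^m\times\RR)\cap(\XX\times I_k)\subseteq J_f^{m,k}\times I_k$, the product inequality yields
\[
\HH^n\bigl(A\cap(J_f^m\times\RR)\bigr)\le\sum_{k\in\mathbb Z}c_n\epsilon\,\HH^{n-1}(J_f^{m,k})=c_n\epsilon\,\HH^{n-1}(J_f^m).
\]
Letting $\epsilon\to 0$ gives $\HH^n(A\cap(J_f^m\times\RR))=0$ for each $m$, and summing over $m$ concludes. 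The only non-trivial point is the $\sigma$-finiteness of $J_f$; the product estimate and the slicing are routine.
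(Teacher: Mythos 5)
Your proof is correct and follows essentially the same route as the paper's: both rest on the $\sigma$-finiteness of $\HH^{n-1}$ on $J_f$ plus a covering estimate showing that the graph of $f^\vee$ over a set of finite $\HH^{n-1}$-measure sits inside thin product strips of small $\HH^n$-measure. The only (cosmetic) difference is that you localize in the vertical variable by partitioning $\RR$ into $\epsilon$-intervals and taking preimages under $f^\vee$, whereas the paper localizes in $\XX$ by restricting to a compact set on which $f^\vee$ is uniformly continuous and covering it by small balls; both reductions lead to the same product bound $\HH^n(B\times I)\lesssim\epsilon\,\HH^{n-1}(B)$.
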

\begin{proof}
We only prove that $$\HH^n\big(\{(x,t):x\in J_f,t=f^\vee(x)\}\big)=0,$$
the other statement being analogous. Also, we can reduce ourselves to prove that
 $$
 \HH^n\big(\big\{(x,t):x\in K,t=f^\vee(x)\big\}\big)=0,
 $$
where $K\subseteq J_f$ is a compact set with $\HH^{n-1}(K)<\infty$ and $f^\vee_{|K}:K\rightarrow\RR$ is uniformly continuous.
Indeed, notice first that $\HH^n$ is $\sigma$-finite on $$\{(x,t):x\in J_f,t=f^\vee(x)\}\subseteq J_f\times\RR,$$
as $\HH^{n-1}$ is $\sigma$-finite on $J_f$. Then we can reduce ourselves to consider $\HH^n\mres \tilde K$ with $\tilde K\subseteq \{(x,t):x\in J_f,t=f^\vee(x)\}$ compact such that $K\defeq\pi^1(\tilde K)$ satisfies $\HH^{n-1}( K)<\infty$. The continuity of $f^\vee_{|K}$ comes from the fact that it is the inverse of the continuous map $\pi^1$ defined on a compact set into a Hausdorff space.

Now we conclude with a covering argument. Let $\epsilon>0$. Let also $\delta\in (0,\epsilon)$ be such that $|f^{\vee}(x)-f^\vee(y)|<\epsilon$ if $x,y\in K$ are such that $\dist(x,y)<\delta$. Now we find a sequence of balls $\{B_{r_i}(x_i)\}_i$ with  $r_i<\delta$,
$K\subseteq\bigcup_i B_{r_i}(x_i)$ and $\sum_i \omega_{n-1}{r_i}^{n-1}\le \HH^{n-1}(K)+\epsilon$. Now notice that $\tilde K\cap (B_{r_i}(x_i)\times \RR)\subseteq B_{r_i}(x_i)\times (f^\vee(x_i)-\epsilon,f^\vee(x_i)+\epsilon)$.
It isx easy to show that, as $r_i<\delta$,
$$
\HH^n_{\sqrt{2}\delta}\big( B_{r_i}(x_i)\times(f^\vee(x_i)-\epsilon,f^\vee(x_i)+\epsilon)\big)\le \omega_{n}(\sqrt 2r_i)^n(2\epsilon/r_i+1).
$$
Therefore, if $C$ denotes a constant that may vary from line to line,
\begin{align*}
	\HH_{\sqrt 2\delta }^{n}({\tilde K})&\le C\sum_{i\in\NN} (\sqrt 2r_i)^n(\epsilon/(\sqrt{2}r_i)+1)\le C\sum_{i\in\NN} \big (\epsilon r_i^{n-1}+ r_i^n\big)\le C(\epsilon+\delta)\sum_{i\in\NN} r_i^{n-1}\\&\le C(\epsilon+\delta)(\HH^{n-1}(K)+\epsilon).
\end{align*}
The conclusion follows letting $\epsilon\searrow 0$.
\end{proof}
\begin{lem}\label{ccsnasno}
It holds that 
$$
\nu_{\GG_f}^{{ u}}(x,t)=(\nu_f^{ u}(x),0),\quad\text{ for $|\DIFF\nchi_{\GG_f}|$-a.e.\ $(x,t)\in (D\cap J_f)\times\RR$}. 
$$
\end{lem}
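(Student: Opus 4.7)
The plan is to combine the jump structure of $f$ on $J_f$ with a blow-up analysis of $\GG_f$. First, a preliminary reduction. By \Cref{negl} together with \Cref{thm:repr_per} (which controls $|\DIFF\nchi_{\GG_f}|$ by $\Theta_n(\mass\otimes\mathcal L^1,\cdot)\HH^n\mres\mathcal F\GG_f$ on $\XX\times\RR$), the set $\{(x,t)\in J_f\times\RR : t\in\{f^\wedge(x),f^\vee(x)\}\}$ is $|\DIFF\nchi_{\GG_f}|$-negligible. Thanks to \Cref{lem:char_G_f}, it therefore suffices to prove the identity at $|\DIFF\nchi_{\GG_f}|$-a.e.\ $(x,t)\in (D\cap J_f)\times\RR$ with $t\in(f^\wedge(x),f^\vee(x))$.

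Next, I would isolate a full-measure set of generic points $(x,t)$ of the above form such that: $x\in\mathcal R_n^*$ is a standard jump point of $f$, meaning that $f$ admits the blow-up $f^\wedge(x)\nchi_{H^-}+f^\vee(x)\nchi_{H^+}$ for complementary halfspaces $H^\pm\subseteq\RR^n$ with common boundary normal equal to $\nu_f(x)$ read in an appropriate tangent basis; $(x,t)\in\mathcal F\GG_f$, so that the pmGH blow-up of $\GG_f$ at $(x,t)$ is a halfspace of $\RR^{n+1}$; and the usual Lebesgue-point conditions hold at $x$ for $\nu_f\cdot\nabla u^i$ and $A$ with respect to $|\DIFF f|$, and at $(x,t)$ for $\nu_{\GG_f}\cdot\nabla u^i$, $\nu_{\GG_f}\cdot\nabla\pi^2$, and $A$ with respect to $|\DIFF\nchi_{\GG_f}|$. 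Such points have full $|\DIFF\nchi_{\GG_f}|$-measure by the structural results recalled in \Cref{sec:Prel} and standard differentiation on asymptotically doubling measures.

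The heart of the argument is a blow-up computation. Along a sequence $r_i\searrow 0$ realizing the convergence $(\XX,r_i^{-1}\dist,\mass_x^{r_i},x)\to(\RR^n,\dist_e,\underline{\mathcal L}^n,0)$ in some realization $(\ZZ,\dist_\ZZ)$, I would observe that on $\ZZ\times\RR$ the rescaled characteristic function of $\GG_f$ around $(x,t)$ is, in coordinates $(y,\tau)$, given by $\nchi_{\{t+r_i\tau<f(\tilde y)\}}$, where $\tilde y$ corresponds to $y$ in the $i^{\rm th}$ rescaling. From the step-function blow-up of $f$ at $x$ and the strict inequality $f^\wedge(x)<t<f^\vee(x)$, the quotient $(f(\tilde y)-t)/r_i$ diverges to $-\infty$ on $H^-$ and to $+\infty$ on $H^+$, locally uniformly in $\tau$. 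Hence the $L^1_{\mathrm{loc}}$-limit of $\nchi_{\GG_f}$ is $\nchi_{H^+\times\RR}$, so the pmGH blow-up of $\GG_f$ at $(x,t)$ equals the cylindrical halfspace $H^+\times\RR\subseteq\RR^{n+1}$.

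To conclude, I would verify that $v\defeq(A(x)u,\pi^2)$ is a system of good coordinates for $\GG_f$ at $(x,t)$; indeed, $\nabla\pi^2$ has norm $1$ and is orthogonal to every $\nabla u^i$ on $\XX\times\RR$, so the matrix $M'(x,t)$ governing $(u,\pi^2)$ reduces to $\mathrm{diag}(M(x),1)$ and the splitting conditions at $(x,t)$ reduce to those of $A(x)u$ at $x$, already granted at generic $x$ via \Cref{remarkfrequenteAM}. The blow-up halfspace $H^+\times\RR$, read in the $v$-system, has unit normal $(\nu_f^v(x),0)=(A(x)\nu_f^u(x),0)$. Applying the transformation rule from \Cref{remarkfrequenteAM} to $\GG_f$ with respect to the splitting map $(u,\pi^2)$ and the block-diagonal orthogonalizer $A'(x,t)\defeq\mathrm{diag}(A(x),1)$, I would deduce
\[
(A(x)\nu_f^u(x),0)\,=\,\nu_{\GG_f}^v\,=\,A'(x,t)\,\nu_{\GG_f}^u(x,t),
\]
which forces $\nu_{\GG_f}^u(x,t)=(\nu_f^u(x),0)$, since $A(x)$ is invertible. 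The main obstacle is the blow-up identification in the third step: one has to pass carefully to the limit in the defining inequality for $\GG_f$, using the $L^1_{\mathrm{loc}}$ convergence of $f$ to the two-valued step function together with the strict separation $f^\wedge(x)<t<f^\vee(x)$ to upgrade it into pointwise convergence of the rescaled characteristic functions on each halfspace $H^\pm$.
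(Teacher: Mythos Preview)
Your strategy---identify the blow-up of $\GG_f$ at $(x,t)$ with a vertical halfspace $H^+\times\RR$, then read off the normal in good coordinates---is sound and ultimately equivalent to the paper's, but the central step is not justified as you have written it. The claim that ``$(f(\tilde y)-t)/r_i$ diverges to $\pm\infty$ on $H^\pm$, locally uniformly in $\tau$'' is not meaningful: the blow-up of $f$ at a jump point is an $L^1_{\rm loc}$ statement about the super-level sets, not a pointwise one. What you actually need is that, for each fixed level $s\in(f^\wedge(x),f^\vee(x))$, the rescaled set $\{f>s\}$ converges in $L^1_{\rm loc}$ to the \emph{same} halfspace $H^+$, and then conclude $\nchi_{\GG_f}\to\nchi_{H^+\times\RR}$ by Fubini in the $\tau$-variable. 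The ``same halfspace for every level'' part is precisely the identification $\nu_f=\nu_{\{f>s\}}$ $|\DIFF\nchi_{\{f>s\}}|$-a.e.\ (the locality result \cite[Lemma~3.27]{BGBV}), together with the good-coordinates blow-up for $\{f>s\}$; neither is contained in the bare ``standard jump point'' assertion you invoke, and both must be brought in explicitly. A Fubini/coarea argument is also needed to select a single $x$ at which the level-set blow-up works for $\mathcal L^1$-a.e.\ $s$.

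For comparison, the paper does not attempt to blow up $f$ directly. It fixes a level $s$ with $\{f>s\}$ of finite perimeter, restricts to a compact $K\subseteq D\cap J_f\cap\FF\{f>s\}$ and an interval $I\ni s$ with $\bar I\subseteq(f^\wedge,f^\vee)$ on $K$, and then proves the product formula $\HH^n\mres(K\times\RR)=(\HH^{n-1}\mres K)\otimes\HH^1$ via the rectifiability of $\FF\{f>s\}$. At a density-$1$ point $(x,t)$ for $K\times I$ in $|\DIFF\nchi_{\GG_f}|$, with $x$ of density $1$ for $K$ in $|\DIFF\nchi_{\{f>s\}}|$, the rescaled perimeter measures $|\DIFF\nchi_{\GG_f}|$ and $|\DIFF\nchi_{\{f>s\}}|\otimes\HH^1$ have the same weak limit, forcing $H=H'\times\RR$. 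Your route avoids the Hausdorff-measure product formula but in exchange must supply the uniform-in-level blow-up of $\{f>s\}$; either way, the coarea formula and the identification $\nu_f=\nu_{\{f>s\}}$ are doing the essential work.
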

\begin{proof}
Notice that, by \eqref{diffandper}, $(\pi_*^1|\DIFF\nchi_{\GG_f}|)\mres J_f=|\DIFF f|\mres J_f$. Then, by \Cref{coarea}, it suffices to show the claim for $x\in D\cap J_f\cap\FF\{f>s\}$, for some $s\in\RR$.
By \Cref{negl}, we can use a partitioning argument to reduce ourselves to prove the claim on $K\times I$, where $K\subseteq D\cap J_f\cap\FF\{f>s\}$ is compact with $\HH^{n-1}(K)<\infty$ and $I=(a,b)\subseteq\RR$ is an open interval such that for every $x\in K$, $\bar I\subseteq (f^\wedge(x),f^\vee(x))$ and $s\in I$. We can also assume, by \cite[Lemma 3.27]{BGBV}, that $\nu_f^u=\nu_{\{f>s\}}^u$ on $K$.
 
By a suitable modification of \Cref{remarkfrequenteAM}, we see that for $|\DIFF\nchi_{\GG_f}|$-a.e.\ $(x,t)$ with $x\in D$, it holds that $v\defeq (A(x)u,\pi^2)$ is a system of good coordinates for $\GG_f$ at $(x,t)$. Also,  for $|\DIFF\nchi_{\GG_f}|$-a.e.\ $(x,t)$ the conclusion of \cite[Proposition 4.8]{bru2021constancy} holds at $(x,t)$.  Also, for $|\DIFF\nchi_{\{f>s\}}|$-a.e.\ $x$ the analogous conclusions for $\{f>s\}$ are in place at $x$.
Take a point  $p=(x,t)\in\FF\GG_f$ of density $1$ for $K\times I$ with respect to $|\DIFF\nchi_{\GG_f}|$ such that $x$ is of density $1$ for $K$ with respect to $|\DIFF\nchi_{\{f>s\}}|$ and satisfying the conclusions above. 
Then for some sequence $\{r_i\}_i$, $r_i\searrow 0$, $$\big(\XX\times\RR,r_i^{-1}(\dist\times\dist_e),(\mass\otimes\mathcal L^1)_p^{r_i},p,\GG_f\big)\to(\RR^{n+1},\dist_e,\underline{\mathcal L}^{n+1},0,H),$$
in a realization $(\ZZ\times\RR,\dist_{\ZZ\times\RR})$, where $$H\coloneqq\big\{y\in\RR^{n+1}\;\big|\;y\cdot\nu(x,t)\geq 0\big\},$$
where $\nu(x,t)$ is given by \eqref{eq:def_nu_in_good_coord} for $v$. 
Also, we have that 
$$\big(\XX,r_i^{-1}\dist,\mass_x^{r_i},x,\FF\{f>s\}\big)\to(\RR^{n},\dist_e,\underline{\mathcal L}^{n},0,H'),$$where $H'$ is given by $$H'\coloneqq\big\{y\in\RR^{n}\;\big|\;y\cdot\mu(x)\geq 0\big\},$$
where $\mu(x)$ is given by \eqref{eq:def_nu_in_good_coord} for $A(x)u$. 
Up to taking a non-relabelled subsequence, we assume that  \(|\DIFF\nchi_{\GG_f}|\) weakly converges to \(\frac{1}{\omega_{n+1}}\HH^n\mres\partial H\) in duality with \(C_{\rm bs}(\ZZ\times\RR)\) and  \(|\DIFF\nchi_{\{f>s\}}|\) weakly converges to \(\frac{1}{\omega_{n}}\HH^{n-1}\mres\partial H'\) in duality with \(C_{\rm bs}(\ZZ)\). 
Now, as we will prove in the paragraph below, we have that $(\HH^{n-1}\mres K)\otimes \HH^1=\HH^n\mres(K\times \RR)$.
A simple argument of weak convergence, based on \Cref{thm:repr_per} and exploiting the density assumption, yields that $H=H'\times \RR$, whence the result follows. Indeed, we would have $\nu_{n+1}(x,t)=(\nu_{\mathcal{G}_f}^v)_{n+1}(x,t)=(\nu_{\mathcal{G}_f}^u)_{n+1}(x,t)=0$.

Now we prove the coarea formula claimed in the paragraph above exploiting the rectifiability result of \cite{bru2019rectifiability} and the fact that $K\subseteq \mathcal{F}\{f>s\}$. Take $E$ a set of finite perimeter.
Fix $\epsilon>0$.
We use \cite[Theorem 4.1]{bru2019rectifiability} (see also \cite[Remark 4.3]{bru2019rectifiability}) to write, up to $\HH^{n-1}$-negligible subsets, $\FF E=\bigcup_k E_k$, where $E_k$ are pairwise disjoint Borel subsets of $\FF E$ such that, for every $k$, $E_k$ is $(1+\epsilon)$-bi-Lipschitz to some Borel subset of $\RR^{n-1}$. Say that, for every $k$, there exists $g_k: E_k\rightarrow \RR^{n-1}$ $(1+\epsilon)$-bi-Lipschitz with its image. Call also $f_k: E_k\times\RR\rightarrow \RR^{n-1}\times \RR$ the map $(x,t)\mapsto (g_k(x),t)$.
We have that, if $\psi:\XX\times\RR\rightarrow[0,1]$ is Borel, then
\begin{align*}
\int_{\FF E\times\RR}\psi(x,t)\,\dd\HH^n(x,t)&=\sum_{k\in\NN} \int_{E_k\times \RR}\psi(x,t)\,\dd\HH^n(x,t)
\\&=\sum_{k\in\NN} \int_{g_k( E_k)\times \RR } \psi(g_k^{-1}(y),t)\,\dd((f_k)_*\HH^n)(y,t),
\end{align*}
where we used that if $N\subseteq\FF E$ is $\HH^{n-1}$-negligible, then $N\times\RR$ is $\HH^n$-negligible, thanks to a simple covering argument.
Now notice that, as $g_k$ is $(1+\epsilon)$-bi-Lipschitz, we have that, on their natural domains,
$$
\left(\frac{1}{1+\epsilon}\right)^{n-1}\HH^{n-1}\le (g_k)_*\HH^{n-1}\le \left(\frac{1}{1-\epsilon}\right)^{n-1}\HH^{n-1}$$ and $$ \left(\frac{1}{1+\epsilon}\right)^{n}\HH^n\le (f_k)_*\HH^n\le \left(\frac{1}{1-\epsilon}\right)^{n}\HH^n.
$$
Therefore, using Fubini's theorem in $\RR^n$, setting $\tilde\psi(y,t)\defeq\psi(g_k^{-1}(y),t)$ and denoting $C_\epsilon$ a constant, that may vary from line to line but that depends only on $\epsilon$ and $n$ and such that $C_\epsilon\rightarrow 1$ as $\epsilon\searrow 0$,
\begin{align*}
   \int_{g_k(E_k)\times \RR } \tilde\psi(y,t)\,\dd((f_k)_*\HH^n)(y,t)&\le  C_\epsilon \int_{g_k( E_k)\times \RR } \tilde\psi(y,t)\,\dd\HH^n(y,t)   
  \\&= C_\epsilon \int_{g_k(E_k) }\int_\RR \tilde\psi(y,t)\,\dd\HH^1(t)\,\dd\HH^{n-1}(y)
  \\&\le C_\epsilon \int_{g_k( E_k) }\int_\RR \tilde\psi(y,t)\,\dd\HH^1(t)\,\dd{((g_k)_*\HH^{n-1})}(y)
  \\&= C_\epsilon\int_{ E_k }\int_\RR \psi(x,t)\,\dd\HH^1(t)\,\dd\HH^{n-1}(x).
\end{align*}
All in all,
\begin{align*}
\int_{ \FF E\times\RR}\psi(x,t)\,\dd\HH^n(x,t)&\le \sum_k
C_\epsilon\int_{ E_k }\int_\RR \psi(x,t)\,\dd\HH^1(t)\,\dd\HH^{n-1}(x)
\\&=C_\epsilon \int_{ \FF E }\int_\RR \psi(x,t)\,\dd\HH^1(t)\,\dd\HH^{n-1}(x).
\end{align*}
As $\epsilon>0$ was arbitrary, we obtain that
$$
\int_{\FF E\times \RR}\psi(x,t)\,\dd\HH^n(x,t)\le \int_{\FF E}\int_\RR \psi(x,t)\,\dd {\HH^1(t)}\,\dd\HH^{n-1}(x).
$$
The opposite inequality is obtained similarly.
\end{proof}

\subsection{Proof of the main results}
We are now ready to prove the main theorems of this note.

\begin{proof}[Proof of \Cref{thm:Principale1}]
If $f\in \BV_{\rm loc}(\XX)$, then $\GG_f$ has locally finite perimeter thanks to the proof of item (a) in \cite[Theorem 5.1]{Ambrosio-Pinamonti-Speight15}. Conversely, assume that $\GG_f$ has locally finite perimeter. Then, the argument in the proof of item (b) of \cite[Theorem 5.1]{Ambrosio-Pinamonti-Speight15} yields that for any $x\in\XX$ and $r>0$,
$$
\int_\RR |\DIFF\nchi_{\{f>t\}}|(B_{2r}(x))\,\dd t<\infty.
$$
Now we take $t_0\in (0,\infty)$ big enough so that $\mass(\{f>t_0\}\cap B_r(x))\le \min\{1,\mass(\{f\le t_0\}\cap B_r(x))\}$  and
$\mass(\{f<-t_0\}\cap B_r(x))\le 
\min\{1,\mass(\{f\ge -t_0\}\cap B_r(x))\}$.  This is possible as $f\in L^0(\mass)$.
Thus, taking into account that for $\mathcal L^1$-a.e.\ $t$,
$ |\DIFF\nchi_{\{f>t\}}|= |\DIFF\nchi_{\{f<t\}}|$, we obtain from the relative isoperimetric inequality \eqref{isop} (that holds with $\lambda=1$ on finite-dimensional $\RCD$ spaces) that 
$$
\int_{t_0}^\infty \mass(\{f>t\}\cap B_r(x))\,\dd t<\infty\qquad\text{and}\qquad \int_{-\infty}^{-t_0} \mass(\{f<t\}\cap B_r(x))\,\dd t<\infty.
$$
This implies $f\in L^1_{\rm loc}(\XX)$ by Fubini's theorem. By \Cref{coarea}, it also follows that $f\in\BV_{\rm loc}(\XX)$.

The last conclusion is an immediate consequence of \Cref{area} and \Cref{prop:Reduction}, for what concerns the absolutely continuous part. For what concerns the equality on the jump part and the Cantor part, it directly follows from \eqref{diffandper}.
\end{proof}

\begin{proof}[Proof of \Cref{thmmain1}]
We first show that 
$$
\nu_{\GG_f}^{{ u}}(x,t)=\Bigg(\sqrt{\frac{1}{1+g_f^2}}g_f\nu_f^{ u},-\sqrt{\frac{1}{1+g_f^2}}\,\Bigg)(x),\quad\text{ for }|\DIFF\nchi_{\GG_f}|\text{-a.e.\ }(x,t)\in (D\setminus(J_f\cup C_f))\times\RR.
$$
Recall that \eqref{diffandper} and \Cref{prop:def_set_C_f} imply that we can reduce ourselves to show the claim for $|\DIFF\nchi_{\GG_f}|$-a.e.\ $(x,t)\in (D_f\setminus(C_f\cup J_f))\times\RR$. 

For $|\DIFF\nchi_{\GG_f}|$-a.e.\ $(x,t)\in \big(\{g_f=0\}\cap(D_f\setminus(C_f\cup J_f))\big)\times\RR$, by \Cref{area}, \Cref{cdskcasn} and \eqref{diffandper} it holds that $(\nu_{\GG_f}^u(x,\bar f(x)))_{n+1}=-1$. 
The claim is then proved at $|\DIFF\nchi_{\GG_f}|$-a.e.\ $(x,t)\in \big(\{g_f=0\}\cap(D_f\setminus C_f)\big)\times\RR$ by the following fact. By \Cref{prop:def_set_C_f}, for $x\in D_f$, at $(x,\bar{f}(x))$, $v\defeq (A(x)u,\pi^2)$ is a system of good coordinates for $\GG_f$, see also \Cref{remarkfrequenteAM}. Also, if $\nu(x,\bar f(x))$ is computed as in item v) of \Cref{prop:def_set_C_f}, it holds that $$\nu(p)= \big(A(x)(\nu_{\GG_f}^u(p))_{1,\dots,n}, (\nu_{\GG_f}^u(p))_{n+1}\big),\qquad\text{for $|\DIFF\nchi_{\GG_f}|$-a.e.\ $p=(x,t)\in D_f\times\RR$},$$
and that $|\nu(p)|=1$. Recall that $A(x)$ is invertible, whence the conclusion follows.

Now we show the claim at $|\DIFF\nchi_{\GG_f}|$-a.e.\ $(x,t)$ with $x\in\{g_f>0\}\cap\big(D_f\setminus(C_f\cup J_f)\big)$. Notice that on $\{g_f>0\}\cap\big(D_f\setminus(C_f\cup J_f)\big)$ it holds that $\mass\ll|\DIFF f|\ll\mass$. Therefore, by \Cref{csmkc}, taking into account \Cref{area}, \Cref{cdskcasn} and \eqref{diffandper}, we have the claim.

The fact that 
$$
\nu_{\GG_f}^{{ u}}(x,t)=\big(\nu_f^u(x),0\big),\quad\text{ for }|\DIFF\nchi_{\GG_f}|\text{-a.e.\ }(x,t)\in (D\cap(J_f\cup C_f))\times\RR
$$
is \Cref{ccsnasno} together with \cite[Theorem 3.7 and Lemma 3.8]{ABPrank}.
\end{proof}

\begin{proof}[Proof of \Cref{thmmain2}]
Items $i)$ and $ii)$ can be proved using \Cref{thmmain1}, \Cref{thm:Principale1}, and \Cref{lem:char_G_f}. Item $iv)$ follows from \Cref{ccsnasno}.

We show now item  $iii)$. 
By the representation formula, we write 
\begin{align*}
&\int_{(D\cap J_f)\times\RR}\varphi(x,t) \big(\nu_{\GG_f}^u(x,t)\big)_{i} \dd{|\DIFF\nchi_{\GG_f}|(x,t)}\\&\qquad=\int_{(D\cap J_f)\times\RR}\varphi(x,t) \big(\nu_{f}^u(x)\big)_{i} \nchi_{\partial^*\GG_f}(x,t)\Theta_n(\mass,x)\,\dd\HH^n(x,t),
\end{align*}
where we used that $\Theta_n(\mass,x)=\Theta_{n+1}\big(\mass\otimes\HH^1,(x,t)\big)$ and \Cref{ccsnasno}. Now notice that if $N\subseteq J_f$ is such that $\HH^{n-1}(N)=0$, then $\HH^n(N\times\RR)=0$. This can be proved with an easy covering argument. 

Therefore, taking into account also \Cref{lem:char_G_f} and \Cref{coarea}, we reduce ourselves to prove that for every $\psi:D\times \RR\rightarrow [0,1]$ Borel, we have that for $\HH^1$-a.e.\ $s\in\RR$
$$
   \int_{(D\cap \FF E_s\cap J_f)\times\RR}\psi(x,t)\,\dd\HH^n(x,t)=\int_{D\cap \FF E_s\cap J_f}\int_\RR\psi(x,t)\,\dd t\,\dd\HH^{n-1}(x),
$$
where $E_s\defeq\{f>s\}$. Fix $s$ such that $E_s$ has finite perimeter. The claim is equivalent to $\HH^{n}\mres (\FF E_s\times\RR)=(\HH^{n-1}\mres \FF E_s)\otimes \HH^1$, which has been proved at the end of the proof of \Cref{ccsnasno}.
\end{proof}

\end{document}